\documentclass[a4paper,11pt]{article}
\usepackage[english]{babel}
\usepackage[utf8]{inputenc}
\usepackage{amsthm}
\usepackage{amsmath}
\usepackage{amssymb}
\usepackage{mathrsfs}
\usepackage{textcomp}
\usepackage{enumerate}
\usepackage{setspace}
\usepackage{latexsym}
\usepackage{graphicx}
\usepackage{mathtools}
\usepackage{tabularray}
\usepackage{geometry}
\usepackage{float}
\usepackage{emptypage}
\usepackage{commath}
\usepackage{multicol}
\usepackage{titlesec}
\usepackage{verbatim}
\usepackage{cases}
\usepackage{orcidlink}
\usepackage{amsthm}
\newtheorem{theorem}{Theorem}[section]
\newtheorem{corollary}[theorem]{Corollary}
\newtheorem{lemma}[theorem]{Lemma}
\newtheorem{definition}[theorem]{Definition}
\newtheorem{proposition}[theorem]{Proposition}
\theoremstyle{definition}
\newtheorem{remark}[theorem]{Remark}

\numberwithin{equation}{section}
\newcommand{\e}{\varepsilon}
\renewcommand{\u}{\mathbf{u}}
\newcommand{\R}{\mathbb{R}}
\newcommand{\U}{\mathbf{U}}
\newcommand{\V}{\mathcal{V}}

\begin{document}
	
	\begin{center}
		
		{\LARGE \bf Well-Posedness and Discrete-to-Continuum Convergence of the Kirchhoff Network Model}
		
		\vskip0.5cm
		
		{\large\textsc{Joaqu\'in Oyarz\'un\orcidlink{0009-0006-1217-0615}$^1$}} \\
		{\normalsize e-mail: \texttt{joaquin.oyarzun01@universitadipavia.it}} \\
		\vskip0.35cm
		
		{\large\textsc{Marco Veneroni\orcidlink{0000-0002-8526-3154}$^{1}$}} \\
		{\normalsize e-mail: \texttt{marco.veneroni@unipv.it}} \\
		\vskip0.35cm
		
		{\footnotesize $^1$Department of Mathematics ``F. Casorati'', University of Pavia, 27100 Pavia, Italy}
		
		\vskip0.5cm
		
	\end{center}

	\begin{abstract}
		\noindent In this work, we study the connection between the bidomain model of cardiac electrophysiology and the Kirchhoff Network model (KNM), that was recently introduced by J\"ager and Tveito. In the KNM, each cell of the cardiac tissue is represented as a node in a discrete network and the dynamics of the electric potentials in the heart are described by a system of ordinary differential equations on the nodes. We first study the well-posedness of the KNM and then prove that, when the nodes are situated on the vertices of a regular lattice and the mesh size $\e$ tends to zero, the solutions of the KNM system at cell-size $\e$ converge to the solution of the bidomain model, along appropriate subsequences. 
		Regarding the convergence, we adopt the framework presented by Pennacchio, Savar\'e, and Colli Franzone in \emph{Multiscale modeling for the electrical activity of the heart}, 2005, which is based on (a) the time-discretization of the evolution equations of the bidomain and $\e$-KNM systems via a semi-implicit Euler scheme, (b) the identification of the time-discrete solutions as extremal points of a Minimizing Movement scheme, and (c) the $\Gamma$-convergence of the Minimizing Movements functionals.
		\vskip3mm
		\noindent {\bf Keywords:} bidomain model, cardiac electrophysiology, FitzHugh-Nagumo kinetics, $\Gamma$-convergence, minimizing movements, variational methods.
		\vskip3mm
		\noindent {\bf AMS (MOS) Subject Classification:} 
		49J45, 
		35Q92,  
		35K57, 
		49K20. 
	\end{abstract}

\section{Introduction}
\label{sec:intro}
The Bidomain model (BD), originally introduced by Tung in 1978 \cite{LT78}, stands out as the most comprehensive continuum-based approach for cardiac electrophysiology \cite{CFPS14}. In the model, the ordinary differential equations that describe the dynamics of the ionic current flow through the cellular membrane are coupled with the partial differential equations for the intracellular and extracellular potentials on a common domain which represents a superposition of the intra- and extracellular media. 

From a computational perspective, the BD model is widely used for simulating cardiac electrical activity (here, the related literature review would cover several volumes. We refer, as an example, to a not-so-recent, but very informative survey: \cite{CFPS14}). However, some detail is lost, as the discrete cellular structure of the myocardium is represented by a continuum domain. At the microscopic scale, the Extracellular-Membrane-Intracellular (EMI) model \cite{JaegerTveito_EMI}, which is based on the original microscopic model studied in \cite{CFS02}, provides a more accurate description of the tissue geometry, by explicitly representing individual cells. While biophysically exhaustive, the high computational cost associated with the EMI framework remains prohibitive for large-scale tissue simulations \cite{JaegerTveito_EMI, JT23}. The Kirchhoff Network Model (KNM), introduced by J\ae ger and Tveito \cite{JaegerTveito_KNM, JaegerTveito_SKNM}, is presented as an intermediate-resolution description between the EMI and BD scales. It represents cardiac tissue as a discrete resistive network, and it is characterized by CPU times $\sim$90 times shorter than the BD model on comparable instances \cite{JaegerTveito_KNM}.

Despite its success in numerical experiments, the KNM has lacked a formal mathematical foundation. Previous studies \cite{JaegerTveito_KNM, JT23} rely on numerical simulations, leaving a gap regarding the model's analytical properties and its connection to the BD model. The primary objective of this work is to analyze the KNM's well-posedness and its asymptotic relationship with the macroscopic BD model. More precisely, we claim that, from a mathematical standpoint, the KNM can be viewed as a (physiologically sound and computationally efficient) finite-differences approximation of the BD model, and, as such, it converges to the BD as the space discretization step vanishes.

To achieve these objectives, we follow the variational approach by Pennacchio, Savar\'e, and Colli Franzone \cite{PSCF05}, which originally proved the convergence of the microscopic cellular model to the macroscopic Bidomain model. In this work, we adapt their methodology to the discrete setting of the KNM. The convergence analysis relies on two steps: First, we discretize in time the KNM and the BD model; adopting the Minimizing Movements scheme introduced by De Giorgi \cite{AGS05, DG93}, the discrete-time evolution problems are reduced to the step-by-step minimization of a sequence of functionals. Then, we use $\Gamma$-convergence (see, e.g., \cite{DM93}), to show that minimizers of the KNM functionals converge to minimizers of the BD functionals, as the lattice scale $\e$ vanishes. This process incorporates the scaling problem and the derivation of the macroscopic conductivity tensors as limits of the KNM graph weights. We point out that all the results that we use regarding the BD model were obtained in \cite{CFS02,PSCF05}, in our work we just show that the finite dimensional KNM fits in the framework and then we apply the powerful machinery of \cite{PSCF05}.

\subsection{The Bidomain Model}
Let $T>0$ and $\Omega \subset \mathbb{R}^d$ ($d=2,3$) be a bounded Lipschitz domain. The state of the system is governed by the intracellular and extracellular electrical potentials, denoted by $u_i$ and $u_e$ respectively, and by a recovery variable $s$ that captures the repolarization dynamics. The fundamental variable of interest is the transmembrane potential, defined as $v = u_i - u_e$, whose evolution is described by the following system:
\begin{equation*}
	\text{(BD)}\quad   \left\{
	\begin{aligned}
		&\chi c_m \dfrac{\partial v}{\partial t} - \nabla \cdot (M_i \nabla u_i) + \chi I_{\text{ion}}(v,s) = I_{\text{app}} && \text{in } \Omega \times (0,T), \\
		&\chi c_m \dfrac{\partial v}{\partial t} + \nabla \cdot (M_e \nabla u_e) + \chi I_{\text{ion}}(v,s) = I_{\text{app}} && \text{in } \Omega \times (0,T), \\
		&\dfrac{\partial s}{\partial t} = H(v,s) && \text{in } \Omega \times (0,T),
	\end{aligned}
	\right.
\end{equation*}
subject to the following no-flux boundary and initial conditions:
\begin{equation*}
	\begin{aligned}
		M_i \nabla u_i \cdot \mathbf{n} = 0, \qquad & M_e \nabla u_e \cdot \mathbf{n} = 0 && \text{on } \partial \Omega \times (0,T), \\
		v(x, 0) = v_{0}(x), \qquad & s(x, 0) = s_{0}(x) && \text{in } \Omega,
	\end{aligned}
\end{equation*}
where $I_{\text{app}} : \Omega \times (0,T) \to \mathbb{R}$ accounts for applied source currents, while $\mathbf{n}$ denotes the outward unit normal vector to $\partial \Omega$. The parameters $c_m$ and $\chi$ represent the membrane capacitance and the surface-to-volume ratio, respectively, while $M_i, M_e: \overline \Omega \to \R^{3 \times 3}$ are the symmetric, positive definite, anisotropic conductivity tensors. 

Throughout the rest of this work, we assume that the applied source current vanishes, i.e., $I_\text{app} \equiv 0$. The specific membrane behaviour is determined by the functions $H(v, s)$ and $I_{\text{ion}}(v, w)$, which characterize the ion-channel kinetics and the associated recovery dynamics. A commonly used phenomenological model for cardiac excitation is the FitzHugh-Nagumo model, which offers a simplified description of the excitation-recovery process:
\begin{equation}
	\label{eq:FHN}
	I_{\text{ion}}(v, s) = F(v)+ \delta s, \qquad H(v,s) = \sigma v-\gamma s,
\end{equation}
where 
\begin{equation}
	\label{eq:assumption_F}
	\sigma, \gamma, \delta \geq 0 \text{ are given constants and }F \in \mathcal{C}^1(\R) \text{ satisfies: }\inf_{x \in \R} F'(x)> -\infty.
\end{equation}
Moreover, to ensure the uniqueness of a solution, a normalization condition is required:
\begin{equation}
	\label{eq:zeromean}
	\int_{\Omega} u_e(x, t) \, dx = 0, \quad \text{for a.e. } t \in (0,T).
\end{equation}
The mathematical properties of this system present significant challenges due to its degenerate parabolic nature. Rigorous foundations for existence and uniqueness have been established through different analytical perspectives. For example, Colli Franzone and Savar\'e \cite{CFS02} introduced a variational formulation and made use of the theory of degenerate evolution systems, while Bourgault, Coudi\`ere, and Pierre \cite{BCP09} reformulated the system as a single parabolic equation driven by a complex nonlocal ``bidomain" operator (see the discrete equivalent in Section \ref{sec:notation} below). The well-posedness of the BD model with Hodgkin-Huxley-based ionic dynamics was studied in \cite{V09}. 

\subsection{The Kirchhoff Network Model}
The geometry of the cardiac tissue in the KNM is represented by $N$ cells and by a set of electrical connections among them. Therefore, we choose to describe the model using the language of graphs. Let $\mathcal{G}=(\V,\mathcal{E})$ be a finite, connected, and undirected graph, where $\V := \{1, \dots, N\}$ is the set of nodes representing the cardiac cells and $\mathcal{E} \subset \V \times \V$ is the set of edges denoting the electrical connections between cells. The graph is equipped with two families of symmetric and strictly positive weights $G_i,G_e : \mathcal{E}\to\R_+$, which represent the intracellular and extracellular conductivities, respectively. 

Here, we briefly report the original description of the KNM \cite{JaegerTveito_KNM}. Given the evolution time-interval $(0,T)$, the physiological state variables are $u_i,u_e,s: \V \times (0,T) \to \R$, representing the intracellular potential, the extracellular potential, and the recovery variable, respectively. Moreover, $v:=  u_i - u_e$ is the transmembrane potential. For ease of notation, in the following, we denote $f^k:=f(k)$ and $g^{j,k}:=g(j,k)$, for any function $f:\V \to \R$ and $g:\mathcal{E}\to \R$ and, for all $k\in \V$, we let $N_k:=\{j \in \V: \{j,k\}\in \mathcal{E}\}$.

Applying Kirchhoff's Current Law to the intracellular and extracellular compartments of each cell $k$, and using Ohm's law for each pair of neighbouring cells \((j,k) \in \mathcal{E}\), we define the intracellular and extracellular currents
\begin{equation*}
	I_i^{j,k} := G_i^{j,k} (u_i^j - u_i^k), \qquad
	I_e^{j,k} := G_e^{j,k} (u_e^j - u_e^k),
\end{equation*}
The membrane current $I_m^k$ represents the total current flow across the cell membrane at node $k$ and is given by:
\begin{equation}
	\label{eq:I_m}
	I_m^k := A_m^k \left( C_m \frac{dv^k}{dt} + I_{\text{ion}}(v^k, s^k) \right).
\end{equation}
The membrane area at node \(k\) is \(A_m^k > 0\), and the membrane capacitance per unit area \(C_m > 0\) is assumed to be constant. Applying Kirchhoff's Current Law  to the intracellular compartment at node $k$ yields
\begin{equation}
	\label{eq:I_m2}
	I_m^k = \sum_{j \in  N_{k}} I_i^{j,k}.
\end{equation}
Similarly, current conservation in the extracellular compartment results in
\begin{equation}
	\label{eq:I_m3}
	\sum_{j \in  N_{k}} I_e^{j,k} + I_m^k = 0.
\end{equation}
Combining \eqref{eq:I_m2} and \eqref{eq:I_m3}, we obtain the total current conservation identity
\begin{equation}
	\label{eq:conservation}
	\sum_{j \in  N_{k}} I_i^{j,k} + \sum_{j \in  N_{k}} I_e^{j,k} = 0.
\end{equation}
To model tissue excitability, the KNM couples the transmembrane potential with a recovery variable $s^k$ at each node $k$. This relationship is governed by an ordinary differential equation describing the ionic kinetics:
\begin{equation}
	\label{eq:dynamics}
	\dfrac{ds^{k}}{dt} = H(v^{k}, s^{k}).
\end{equation}
Combining the membrane current \eqref{eq:I_m}, the current conservation \eqref{eq:conservation}, the recovery dynamics \eqref{eq:dynamics}, and adopting FitzHugh-Nagumo's structure \eqref{eq:FHN}, the system of equations governing the Kirchhoff Network Model is formulated as follows:
\begin{equation*}
	(\text{KNM} ) \quad \left\{
	\begin{array}{ll}
		C_{m}\dfrac{dv^{k}}{dt}=\dfrac{1}{A_{m}^{k}}\displaystyle\sum_{j \in  N_{k}}G_{i}^{j,k} (u_{i}^{j}-u_{i}^{k})-F(v^{k}) -\delta s^{k},\\
		\displaystyle\sum_{j \in  N_{k}} G_{i}^{j,k} (u_{i}^{j}-u_{i}^{k})+\displaystyle\sum_{j \in  N_{k}} G_{e}^{j,k} (u_{e}^{j}-u_{e}^{k})=0,\\
		\dfrac{ds^{k}}{dt}=\sigma v^{k} -\gamma s^{k}, 
	\end{array}
	\right.
\end{equation*}
subject to the initial conditions
$v^k(0)=v_0^k$, $s^k(0)=s_0^k$,
for all $k\in \V$. In order to simplify the notation, in the remainder of this article, we assume that the membrane surface ratio is node-independent and is taken into account in the graph weights. 

To study the spatially structured version of the KNM, we replace the abstract graph $\mathcal{G}=(\V,\mathcal{E})$ with a regular lattice embedded in a physical domain. Let $\e > 0$ be a dimensionless scaling parameter representing the characteristic lattice spacing and, for $x\in \R^d$, let
\begin{align}
	Q_\e(x):=x+  \left[-\dfrac{\e}{2},\dfrac{\e}{2}\right)^d,\qquad \V_\e&:= \left\{x\in \e\mathbb{Z} :  Q_\e(x) \subset \Omega\right\},\label{def:graph_e1}\\
	\Omega_\e:=\bigcup_{x\in \V_\e} Q_\e(x),\qquad
	\mathcal{E_{\e}}&:=\left\{ \{x_j,x_k\}\in \V_\e\times \V_\e: 
	|x_j - x_k|=\e\right\}.\label{def:graph_e2}
\end{align}
The associated undirected graph $\mathcal{G}_\e=(\V_\e,\mathcal{E_{\e}})$ (which, for small enough $\e$, is still connected) consists of edges connecting nearest neighbours. For each node $x_{k}\in \V_\e$, its neighbourhood is $N_{\e}^{k}:=\{x_{j}\in \V_\e: \{x_k,x_j\} \in \mathcal{E_{\e}}\}$. Let $X^\e$ be the set of all functions $w:\V_\e \to \R$. On $X^\e$ we consider the following rescaled inner product, norm, and zero-mean subspace:
\[
\langle w, \widehat w\rangle_\e
:= \e^d \!\!\!\!\sum_{x_k\in\V_\e} w(x_k) \widehat w(x_k), \,\, |w|^2_\e:=\langle w, w\rangle_\e,    \,\, X^{\e}_0:= \left\{ w\in X^\e : \sum_{x_k\in\V_\e} w(x_k) = 0\right\}.
\]	
and define $\mathbf{X}^\e:=X^\e \times X_0^\e \times X^\e$. The graph is endowed with two families of positive and symmetric conductances $G_i^\e,G_e^\e: \mathcal{E}_\e \to \R_+$. In order to ensure the usual uniform ellipticity for second order linear operators, we require that there exists $\alpha>0$ such that
\begin{equation}
	\label{eq:coercivity_strong}
	\alpha \leq  G_{i,e}^\e(x_j,x_k) \leq \alpha^{-1},\quad \forall  \{x_j,x_k\}\in \mathcal{E}_\e.
\end{equation}
Condition \eqref{eq:coercivity_strong} and the standard Poincar\'e inequality on graphs imply that there exist $\bar \alpha >0$ such that
\begin{equation}
	\label{eq:coercivity}
	\bar \alpha|y|^2 \leq \sum_{\{x_j,x_k\}\in \mathcal{E}_\e} G_{i,e}^\e(x_j,x_k)\left(\frac{y_j-y_k}{\e}\right)^2 \leq \bar \alpha^{-1}|y|^2,\quad \forall y \in X_0^\e,
\end{equation}
where $|y|$ is the euclidean (not rescaled) norm of $y \in \R^{\V_\e}$.

\subsection{Main results}
In order to state our results, we first need to define the quantities that naturally appear in the a priori estimates and in the variational formulation of the problem. For $u_i,u_e,w: \V_\e \to \R$, denoting $\bar u=(u_i,u_e)$, we define:
\begin{align}
	a^\e(\bar u) 		&:= \e^d\sum_{\{x_j,x_k\}\in \mathcal{E}_\e} G_i^\e(x_j,x_k)\left(\frac{u_i(x_j)-u_i(x_k)}{\e}\right)^2 \nonumber \\
	&\quad +\e^d\sum_{\{x_j,x_k\}\in \mathcal{E}_\e} G_e^\e(x_j,x_k)\left(\frac{u_e(x_j)-u_e(x_k)}{\e}\right)^2, \label{def:ae}\\
	b^\e(\bar u) 		&:= C_m |u_i-u_e|_\e^2,\\
	\phi^\e(\bar u) 	&:= \e^d\sum_{x_k\in \V_\e} \varphi(u_i(x_k)-u_e(x_k)),
\end{align}	 
where $\varphi$ is a positive, convex primitive function of $x \mapsto F(x)+\lambda_F x$ for a sufficiently large $\lambda_F \geq 0$ (see \eqref{def:lambda} and \eqref{def:varphi}). This variational framework will be detailed in Section \ref{sec:variational}. 

Since the initial datum for the potentials $u_i,u_e$ is assigned only on their difference $v=u_i-u_e$, we need to impose the following compatibility condition on the initial data. Given $v_0^\e \in X^\e$, we require that $\bar u_0^\e:=(u_{i,0}^\e,u_{e,0}^\e)\in X^\e \times X_0^\e$ satisfies:
\begin{equation}
	\label{eq:compatibility}
	\bar u_0^\e \in \text{argmin}\left\{ a^\e(\bar u): u_i-u_e = v_0^\e\right\}. 
\end{equation}

Our first result concerns the well-posedness of the KNM. Precisely, by KNM$-\e$ we denote the KNM system on a graph $\mathcal{G}_\e=(\V_\e,\mathcal{E_{\e}})$ defined in \eqref{def:graph_e1} and \eqref{def:graph_e2}, with conductances $G_{i,e}^\e$ satisfying \eqref{eq:coercivity} and parameters \eqref{eq:assumption_F}.
\begin{theorem}
	\label{th:wp-KNM}
	Let $\e>0$ and $T>0$ be fixed. Assuming \eqref{eq:assumption_F}, for any initial data $\u_0^\e:=(u_{i,0}^\e,u_{e,0}^\e,s_0^\e) \in  \mathbf{X}^\e$ satisfying \eqref{eq:compatibility}, there exists a unique solution
	\[
	\u^\e:=(u_i^\e,u_e^\e,s^\e)\in C^1([0,T]; \mathbf{X}^\e)
	\]
	of the (KNM$-\e$) system, satisfying $\u^\e(0)=\u_0^\e$ and the a priori estimates:
	\begin{equation}
		\label{apriori-estimate}
		\begin{aligned}
			\sup_{t \in [0,T]} \left( b^\e(\bar{u}^\e(t)) + a^\e(\bar{u}^\e(t)) +\phi^\e(\bar{u}^\e(t)) +  |s^\e(t)|^2_\e  \right) 
			\hspace{4cm} \\
			+ \int_0^T \left( |\partial_t v^\e(t)|_\e^2 + |\partial_t s^\e(t)|_\e^2 \right) dt 
			\leq C \left( b^\e(\bar{u}_0^\e) + \phi^\e(\bar{u}_0^\e) + a^\e(\bar{u}_0^\e) +|s_0^\e|^2_\e \right),
		\end{aligned}
	\end{equation}
	where $v^\e=u_i^\e-u_e^\e$, $\bar u^\e = (u_i^\e,u_e^\e)$, and the constant $C > 0$ is independent of $\e$. In addition, for each $t \in [0,T]$, the potentials $(u_i^\e, u_e^\e)$ attain the minimum constrained energy configuration for $a^\e$, that is: $a^\e(\bar{u}^\e(t)) = \min \left\{ a^\e({u}) : {u}_i - {u}_e = v^\e(t) \right\}.$
\end{theorem}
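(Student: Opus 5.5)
The plan is to reduce the (KNM$-\e$) system to a finite-dimensional ODE for $(v,s)$ alone, through a discrete ``bidomain operator'', and then get the estimates from the gradient-flow structure. Let $L_i, L_e$ be the weighted graph Laplacians $(L_{i,e}w)(x_k):=\e^{-2}\sum_{x_j\in N^k_\e}G^\e_{i,e}(x_j,x_k)(w(x_j)-w(x_k))$, which are symmetric and negative semidefinite on $X^\e$ with kernel the constants. By \eqref{eq:coercivity}, $-L_e$ is invertible on $X_0^\e$.

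\textbf{Step 1: solving the elliptic constraint.} Given $v\in X^\e$, the second equation with $u_i=u_e+v$ reads $(L_i+L_e)u_e=-L_iv$. The right-hand side has zero sum, because $L_i$ is symmetric and annihilates constants. So there is a unique $u_e\in X_0^\e$, namely $u_e=-(L_i+L_e)^{-1}L_iv$, and it depends linearly on $v$. The same equation is exactly the Euler--Lagrange equation of $a^\e$ under the constraint $u_i-u_e=v$. By strict convexity of $a^\e$ on $X^\e\times X_0^\e$ modulo the constraint, this $\bar u=\bar u(v)$ is the unique constrained minimizer. This yields at once both the compatibility \eqref{eq:compatibility} at $t=0$ and the final claim of the theorem at every $t$, once we know $v(t)$.

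\textbf{Step 2: existence and uniqueness.} Substituting Step 1, the first equation becomes $C_m\dot v=\mathcal{A}v-F(v)-\delta s$. Here $\mathcal{A}:=L_i(I-(L_i+L_e)^{-1}L_i)$ is linear, symmetric and negative semidefinite. It is the discrete bidomain operator, and it satisfies $\langle -\mathcal{A}v,v\rangle_\e=a^\e(\bar u(v))$. Together with $\dot s=\sigma v-\gamma s$, this is an ODE in $\R^{2N}$ with $C^1$ (hence locally Lipschitz) right-hand side, so Cauchy--Lipschitz gives a unique local $C^1$ solution; $u_i,u_e$ inherit $C^1$ regularity through the linear map of Step 1. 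Global existence on $[0,T]$ follows from the a priori bound below, which rules out blow-up.

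\textbf{Step 3: energy estimate.} This is the main step. Write $F=\varphi'-\lambda_F\,\mathrm{id}$ with $\varphi\ge0$ convex. Test the $v$-equation with $\partial_tv$ in $\langle\cdot,\cdot\rangle_\e$:
\[ C_m|\partial_tv|_\e^2+\tfrac12\tfrac{d}{dt}a^\e(\bar u)+\tfrac{d}{dt}\phi^\e(\bar u)=\lambda_F\langle v,\partial_tv\rangle_\e-\delta\langle s,\partial_tv\rangle_\e. \]
The identity $\frac{d}{dt}\frac12\langle-\mathcal{A}v,v\rangle_\e=\frac12\frac{d}{dt}a^\e$ uses symmetry of $\mathcal{A}$. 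Test the $s$-equation with $\partial_ts$ and with $s$, which controls $|\partial_ts|_\e^2$ and $\frac{d}{dt}|s|^2_\e$ by $|v|_\e^2+|s|_\e^2$. Then apply Young's inequality, absorbing $\frac{C_m}{2}|\partial_tv|^2_\e$, and bound $\frac{d}{dt}|v|^2_\e\le|v|^2_\e+|\partial_tv|^2_\e$ with a small weight. After adding a multiple of $b^\e=C_m|v|^2_\e$, we obtain a differential inequality $E'+c(|\partial_tv|^2_\e+|\partial_ts|^2_\e)\le C E$, where $E=b^\e+a^\e+\phi^\e+|s|^2_\e$. Gronwall's lemma then gives \eqref{apriori-estimate}, with $C$ depending only on $C_m,\lambda_F,\sigma,\gamma,\delta,T$.

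The hardest point is keeping $C$ independent of $\e$. The scaling of $\langle\cdot,\cdot\rangle_\e$, $a^\e$ and $\phi^\e$ by $\e^d$ is consistent, and neither the Poincaré constant $\bar\alpha$ nor the bounds on $G^\e$ enter the Gronwall constant. So I expect this to go through, provided the coupling $\delta\langle s,\partial_tv\rangle_\e$ is handled via Young's inequality rather than via any norm of $\mathcal{A}$, which would blow up like $\e^{-2}$.
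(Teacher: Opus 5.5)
Your proposal is correct and follows essentially the paper's route. The paper, like you, obtains local solvability of the reduced $(v,s)$ system through the discrete bidomain operator and Picard--Lindel\"of, derives an energy identity by testing with time derivatives and closes it with Young and Gronwall, continues the solution to $[0,T]$ via the a priori bound, and gets minimality because the Kirchhoff constraint is the Euler--Lagrange equation of $a^\e$ under $u_i-u_e=v$.

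The main difference is in the energy step. You test the reduced equation using $\langle -\mathcal{A}v,v\rangle_\e=a^\e(\bar u(v))$, whereas the paper tests the three-field form \eqref{KNM-abstract} with $\partial_t\mathbf{u}^\e$ and invokes Lemma~\ref{lemma:g}. Because you keep $|s|_\e^2$ explicitly in your Gronwall functional, you also avoid the paper's appeal to \eqref{eq:ab_coercive}. As a result, your constant visibly depends neither on $\bar\alpha$ nor on $\gamma>0$.
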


Since solutions of the KNM system are defined only on the nodes of the lattice $\V_\e$, in order to address the convergence to the continuous solutions of the BD model, it is natural to identify a function $w \in X^\e$ with a function that is constant on each cell of the lattice. Precisely, for any discrete function $w \in X^\e$, we define the piecewise constant interpolation operator $\Pi_\e : X^\e \to L^2(\Omega)$ as follows:
\begin{equation*}
	(\Pi_\e w)(x)
	:=
	\sum_{x_k\in \V_\e}
	w(x_k)\,\mathbf{1}_{Q_\e(x_k)}(x)=\begin{cases} 
		w(x_k), & x \in Q_\e(x_k) \,\,\text{and} \,\, x_k\in \V_\e, \\ 
		0, & x \in \Omega \setminus \Omega_\e. 
	\end{cases}
\end{equation*}
In line with this identification, we define the convergence of a sequence of vectors $\mathbf{u}^\e := (u_i^\e, u_e^\e, s^\e)\in \mathbf{X}^\e$ as the convergence of the corresponding piecewise-constant function $\Pi_\e\mathbf{u}^\e := (\Pi_\e u_i^\e, \Pi_\e u_e^\e, \Pi_\e s^\e)$ with respect to the strong topology of $L^2(\Omega)^3$.

The main result is the following.
\begin{theorem}
	\label{th:main}
	Let $(\e_j)_{j\in \mathbb{N}}$ be a sequence of positive real numbers converging to $0$. Let $\mathbf{u}_0 := (u_{i,0}, u_{e,0}, s_0) \in H^1(\Omega) \times H_\circ^1(\Omega) \times L^2(\Omega)$. Suppose that $\mathbf{u}_0^{\varepsilon_j} := (u_{i,0}^{\varepsilon_j}, u_{e,0}^{\varepsilon_j}, s_0^{\varepsilon_j}) \in \mathbf{X}^{\varepsilon_j}$ is a sequence of initial data satisfying \eqref{eq:compatibility} such that, as $j \to \infty$, 
	\[
	\Pi_{\varepsilon_j}\mathbf{u}_0^{\varepsilon_j} \to \mathbf{u}_0\quad \text{strongly in } L^2(\Omega)^3\quad \text{and}\quad
	\limsup_{j\to \infty} a^{\varepsilon_j}(\bar u_0^{\varepsilon_j}) +\phi^{\varepsilon_j}(\bar u_0^{\varepsilon_j}) < \infty. 
	\]
	Then, there exists a subsequence $(\e_{j_k})$ and two positive-definite conductivity tensors $M_i, M_e \in L^\infty(\Omega;  \R^{3 \times 3}_{sym})$ such that the sequence of discrete solutions $\mathbf{u}^{\varepsilon_{j_k}}$ of the KNM$-\varepsilon_{j_k}$ system satisfies 
	\[
	\Pi_{\varepsilon_{j_k}}\mathbf{u}^{\varepsilon_{j_k}}(t) \to \mathbf{u}(t)\quad \text{strongly in } L^2(\Omega)^3
	\] 
	for every $t \in [0,T]$, where $\mathbf{u}$ is the unique solution of the macroscopic bidomain model (BD), complemented by \eqref{eq:FHN}--\eqref{eq:zeromean}, with conductivity tensors $M_i$ and $M_e$.
\end{theorem}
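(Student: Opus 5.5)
The proof follows the Minimizing Movements / $\Gamma$-convergence framework of \cite{PSCF05}. The idea is to reduce the time-continuous convergence to convergence of the time-discrete schemes, uniformly in the time step, and to use the a priori estimate \eqref{apriori-estimate} of Theorem \ref{th:wp-KNM} to pass from discrete to continuous time.

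\textbf{Time discretization.} Fix a time step $\tau=T/n$. We discretize (KNM$-\e$) with the semi-implicit Euler scheme used in \cite{PSCF05}: $F$ is split as $\varphi'-\lambda_F\,\mathrm{id}$, the convex part is taken implicitly, and the coupling $\delta s$ and the term $\lambda_F v$ explicitly. With this choice, each step $(\bar u^{\e,m},s^{\e,m})\mapsto(\bar u^{\e,m+1},s^{\e,m+1})$ is the unique minimizer of a strictly convex functional
\[
\Phi^\e_{\tau,m}(\bar u)=\tfrac{1}{2\tau}b^\e(\bar u-\bar u^{\e,m})+\tfrac12 a^\e(\bar u)+\phi^\e(\bar u)-\langle \lambda_F v^{\e,m}-\delta s^{\e,m}, u_i-u_e\rangle_\e .
\]
The $s$-update is then explicit in $v^{\e,m+1}$. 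The BD scheme has the same structure, with $a(\bar u)=\int M_i\nabla u_i\cdot\nabla u_i+M_e\nabla u_e\cdot\nabla u_e$. The discrete solutions satisfy estimates of the type \eqref{apriori-estimate}, uniform in $\e$ and $\tau$.

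\textbf{$\Gamma$-convergence of the energies.} The main step is compactness for the quadratic forms $a^\e$, seen as functionals on $L^2(\Omega)^2$ through $\Pi_\e$ and set to $+\infty$ outside $\Pi_\e(X^\e\times X_0^\e)$.
\begin{itemize}
\item By \eqref{eq:coercivity_strong}, $a^\e$ is bounded above and below by multiples of the discrete Dirichlet energy.
\item Standard results on discrete-to-continuum $\Gamma$-convergence of nearest-neighbour energies (Alicandro--Cicalese type compactness, or the abstract localization method for quadratic forms in \cite{DM93}) then give a subsequence $\e_{j_k}$ along which $a^{\e}$ $\Gamma$-converges in $L^2$ to a local quadratic form $\int M_i\nabla u_i\cdot\nabla u_i+M_e\nabla u_e\cdot\nabla u_e$.
\item The tensors $M_{i,e}\in L^\infty$ are uniformly elliptic. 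Ellipticity is inherited from the bounds: the lower bound comes from the liminf inequality for $\alpha\cdot$(discrete Dirichlet), which $\Gamma$-converges to $\alpha\int|\nabla u|^2$. The upper bound comes from the recovery sequence.
\item Equicoercivity follows from discrete Poincar\'e and Rellich-type compactness of $\Pi_\e u$ for bounded discrete $H^1$ energy. A compatible limit is obtained because $\Omega_\e\uparrow\Omega$ and $\Omega$ is Lipschitz.
\end{itemize}
The terms $b^\e$ and the linear terms converge continuously under strong $L^2$ convergence. The term $\phi^\e$ is a convex integral functional: it is lower semicontinuous, and along the recovery sequences one proves convergence, using a truncation/density argument when $\varphi$ has superquadratic growth. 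Since $\Gamma$-convergence is stable under continuous perturbations, $\Phi^\e_{\tau,m}$ $\Gamma$-converges to the BD step functional $\Phi_{\tau,m}$. Combined with equicoercivity and uniqueness of minimizers, this makes the discrete steps converge step by step, by induction on $m$, starting from the convergent, energy-bounded initial data. The compatibility condition \eqref{eq:compatibility} ensures that the initial $\bar u_0$ is the constrained minimizer in the limit as well.

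\textbf{Limit $\tau\to0$ and main obstacle.} The last step is to remove the time discretization uniformly in $\e$. Theorem \ref{th:wp-KNM} and its discrete analogue give $H^1(0,T;L^2)$ bounds on $v,s$ and $L^\infty$ bounds on $a^\e,\phi^\e$, uniform in $\e$. From these, a standard error estimate for the semi-implicit scheme gives $\sup_t|v^\e(t)-v^\e_\tau(t)|_\e+|s^\e-s^\e_\tau|_\e\le C\sqrt\tau$, uniformly in $\e$, proved via convexity of $\phi^\e$ and of $a^\e$ on the constraint. This allows the limits $\e\to0$ and $\tau\to0$ to be interchanged. Convergence of $u_i,u_e$ then follows because they are the constrained $a^\e$-minimizers given $v$, and the $\Gamma$-convergence of $a^\e$ under the affine constraint $u_i-u_e=v$ with $v$ converging strongly holds by the same continuous-perturbation argument. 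Uniqueness of the BD solution \cite{CFS02} identifies the limit and gives convergence for every $t$.

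I expect the hardest step to be the $\Gamma$-compactness of the discrete energies $a^\e$ with $\e$-dependent, merely bounded weights, together with the identification of the limit as a local form with $L^\infty$ tensors near $\partial\Omega$. I would first try the localization method, checking the fundamental estimate for cutoff-based joins on the lattice.
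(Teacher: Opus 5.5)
\textbf{Overall.} Your architecture is the paper's. It has the same five ingredients: minimizing-movement time discretization; $\Gamma$-compactness of $\mathcal H_\e$, for which the paper simply quotes \cite{AC04} at the step you single out as hardest; stepwise $\Gamma$-convergence by induction, with $b^\e$ and the linear terms as continuous perturbations; the $\e$-uniform $O(\sqrt\tau)$ error estimate plus a triangle inequality; and recovery of $u_i,u_e$ as constrained minimizers (Corollary \ref{cor:minimizers}). Updating $s$ explicitly is a harmless variant. It even gives strong convergence of $s$ by induction, so you avoid the $s$-$s$-$w$ topology. The price is that you must prove the error estimate for your own scheme rather than quote \cite{PSCF05}.

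\textbf{The gap.} It lies in the last step, and in your sentence about the initial data. The constraint $u_i-u_e=v^\e$ is an indicator function, not a continuous perturbation. $\Gamma$-convergence of $\mathcal H_\e$ together with strong $L^2$-convergence of $v^\e$ does not force constrained minimizers to converge to the constrained minimizer of $\mathcal H$. Here is a counterexample.
\begin{itemize}
\item Take $\Omega=(0,1)^2$ and $G_e^\e\equiv1$. Let $G_i^\e\equiv1$ on vertical edges, and let it alternate between $1$ and $2$ along each row of horizontal edges. Let $v^\e$ be the sampling of $v(x)=x_1$.
\item Each edge term $G_i(\Delta u_i)^2+(\Delta u_i-\Delta v)^2$ can be minimized separately, and Kirchhoff's law then holds edge by edge. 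So the constrained minimizer depends on $x_1$ only, with horizontal increments $\e/(1+G_i^\e)$.
\item Hence $\Pi_\e u_i^\e$ converges to a function of slope $\tfrac12(\tfrac12+\tfrac13)=\tfrac5{12}$, and $\mathcal H_\e(\bar u^\e)\to\tfrac7{12}$.
\item The $\Gamma$-limit has $M_i=\mathrm{diag}(4/3,1)$ and $M_e=I$. Its constrained minimizer has slope $\tfrac37$ and energy $\tfrac47$.
\end{itemize}
These data satisfy \eqref{eq:compatibility} and all the hypotheses. So \eqref{eq:compatibility} does not pass to the limit. For $t>0$, the $L^2$-convergence of $v^\e(t)$ alone cannot identify the limit of $\bar u^\e(t)$: it does not see the microscale oscillations of $v^\e(t)$, and these oscillations determine $\bar u^\e(t)$.

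\textbf{What would close it.} The correct statement needs an energy hypothesis. If, in addition, $\mathcal H_\e(\bar u^\e)\to\min\{\mathcal H(\bar w):w_i-w_e=v\}$, then lower semicontinuity and uniqueness identify the limit. So for each fixed $t>0$ you must prove this energy convergence from the dynamics. One way is an evolution variational inequality for the reduced convex flow in $(v,s)$, with the $\mathbf g$-term as a bounded perturbation. Compare the flow with a recovery sequence for $\mathbf u(t)$ and integrate over $(t-h,t)$. Then use $\frac1h\mathbf b(\mathbf u(t-h)-\mathbf u(t))\le\int_{t-h}^t\mathbf b(\partial_t\mathbf u)\to0$; this is exactly where $t>0$ enters. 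This yields $\limsup_\e E^\e(\mathbf u^\e(t))\le E(\mathbf u(t))$, and hence convergence of the $\mathcal H_\e$ part. The case $t=0$ needs well-prepared initial data, which the hypotheses do not provide.

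The paper's own proof rests on the same step. Corollary \ref{cor:minimizers} is stated there without proof, and the example above contradicts it. So this is a gap you share with the paper rather than one you introduced.
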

The remainder of this paper is organized as follows. In Section 2, we recall the graph-theoretic notation, we define a discrete bidomain operator and prove local existence and uniqueness for the KNM as a graph-based dynamical system. Section 3 introduces the variational formulation and the a priori estimates, and provides the proof of Theorem \ref{th:wp-KNM}. In Section 4, we review the time discretization and the minimizing movements scheme of \cite{PSCF05} and state the uniform error estimates. Section 5 collects the results for the continuous Bidomain Model. In Section 6 we study the scaling limit as $\e \to 0$, the static $\Gamma$-convergence of the incremental functionals, and we combine all the intermediate steps into the proof of Theorem 2.


\section{Local well-posedness of the KNM system}
\label{sec:notation}
In this Section, we temporarily drop the assumption on the location of the nodes on the lattice $\e \mathbb{Z}^d$, and we study the local well-posedness of the KNM on an abstract graph. We thus omit the index $\e$ from all notation. In order to prove local existence and uniqueness, we adopt the discrete counterpart of the \textit{bidomain operator} introduced in \cite{BCP09}, thus reducing the two-equations degenerate system to one parabolic equation, driven by a more complex, but linear, operator.

\subsection{The Combinatorial Laplacian}
Let $\mathcal{G}=(\V,\mathcal{E})$ be a finite, connected, and undirected graph, with nodes $\V := \{1, \dots, N\}$, edges $\mathcal{E} \subset \V \times \V$, and symmetric weights $G : \mathcal{E}\to\R_+$. Let $X$ be the set of functions $u:\V \to \R$,  denote $u^k:=u(k)$ and for all $u,v\in X$ let $\langle u, w \rangle:=\sum_{k\in \V}u^kw^k$. For any \( u \in X \) and each node $ k \in \V$, the weighted combinatorial Laplacian operator \( \mathcal{L}_G: X \to X \) is defined by
\[
(\mathcal{L}_G u)^k  := \sum_{j \in \mathcal{N}_k} G^{j,k} (u^k - u^j).
\]
\noindent
\begin{remark}
	\label{rem:laplacian}
	We collect here some general properties of the graph Laplacian. We refer to \cite{CH97} for a complete study. 
	
	\begin{itemize}
		\item[(i)] The operator \(\mathcal{L}_G\) is linear, symmetric, and positive semidefinite: for all $u,w \in X$
		\begin{align}
			\langle \mathcal{L}_G u, w \rangle &= \sum_{k \in \V} \sum_{j \in \mathcal{N}_k} G^{j,k} (u^k - u^j) w^k   \\
			&=  \sum_{\{j,k\}\in \mathcal{E}} G^{j,k} (u^k - u^j) (w^k - w^j) =\langle u, \mathcal{L}_G w \rangle.\label{5}
		\end{align}
		In particular,
		\[
		\langle \mathcal{L}_G u, u \rangle 	=  \sum_{\{j,k\}\in \mathcal{E}} G^{j,k} (u^k - u^j)^2 \geq 0
		\] 
		and, since $\mathcal{G}$ is connected, $\mathcal L_Gu =0$ if and only if $u$ is constant. 
		\item[(ii)] \, Let $X_0:=\left\{ w\in X : \sum_{k \in \V}w^k=0\right\}$. Then $\mathcal{L}_G(X)=X_0$. Indeed, let $f \in X$ be such that  $f^k=1$ for all $k\in \V$. For any \(w \in X\)
		\[
		\sum_{k \in \V} (\mathcal{L}_G w)^k = \langle \mathcal{L}_G w, f \rangle =  \langle w, \mathcal{L}_G  f \rangle =0.
		\]
		\item[(iii)] \, The restriction of $\mathcal L_G$ to $X_0$ is positive definite and the map $\mathcal{L}_G|_{X_0} : X_0 \to X_0$ is a linear isomorphism and is thus invertible. 
	\end{itemize}	
	When $G=1$,  $\mathcal L_G$ can be interpreted as the discrete divergence of a discrete gradient, which makes it the natural graph analogue of the continuous Laplacian.
\end{remark}

\subsection{The bidomain operator}
By identifying the net currents at each node as the action of combinatorial Laplacians with weights $G_i,G_e$:
\begin{equation}
	\label{eq:graph_laplacians}
	(\mathcal{L}_{i}u_{i})^{k} = \sum_{j \in \mathcal{N}_{k}}G_{i}^{j,k} (u_{i}^{k}-u_{i}^{j}),\qquad(\mathcal{L}_{e}u_{e})^{k} = \sum_{j \in \mathcal{N}_{k}}G_{e}^{j,k} (u_{e}^{k}-u_{e}^{j}).
\end{equation}
we can replace the second equation in the KNM system with $\mathcal{L}_i u_i + \mathcal{L}_e u_e = 0$. Expressing $u_e$ in terms of the transmembrane potential as $u_e = u_i - v$ and
defining the total Laplacian operator $\mathcal{L}_T := \mathcal{L}_i + \mathcal{L}_e$, we finally obtain:  $\mathcal{L}_T u_i = \mathcal{L}_e v$. By Remark \ref{rem:laplacian},  the operator $\mathcal{L}_T|_{X_0} : X_0 \to X_0$ is symmetric and positive definite, therefore invertible with bounded inverse \((\mathcal{L}_T|_{X_0})^{-1}: X_0 \to X_0\).  Consequently, the composition
\(
X \xrightarrow{\mathcal{L}_e} X_0 \xrightarrow{(\mathcal{L}_T|_{X_0})^{-1}} X_0 \xrightarrow{\mathcal{L}_i} X_0
\)
is well defined and linear. We thus define:
\begin{equation}
	\label{eq:operator_L}
	\mathcal{L}:X \to X_0,\qquad  \mathcal{L} := \mathcal{L}_i (\mathcal{L}_T|_{X_0})^{-1}\mathcal{L}_e.
\end{equation}	 
By assigning initial data $(v_0,s_0)\in X \times X$, we can rewrite the Kirchhoff Network Model (KNM) described in Section \ref{sec:intro} as the following Cauchy problem: For $T>0$, we look for $(v,s)\in C^1([0,T];X \times X)$ such that
\begin{equation}
	\label{eq:KNM_reformulated}
	\quad \left\{
	\begin{array}{rl}
		C_m\dfrac{dv}{dt} &= - \mathcal{L}v - [F(v) + \delta s],\vspace{0.1cm}\\
		\dfrac{ds}{dt} &= \sigma v - \gamma s,\vspace{0.1cm}\\
		(v(0),s(0)) & = (v_0,s_0),
	\end{array}
	\right.
\end{equation}
where 
the membrane area parameter $A_m$ is absorbed in $G_i, G_e$, and we denote $F(v)^k:=F(v^k)$. 
\begin{proposition}
	\label{prop:local}
	There exists $T>0$ such that there the Cauchy Problem \eqref{eq:KNM_reformulated} has a unique solution $(v,s)\in C^1([0,T];X \times X)$.
\end{proposition}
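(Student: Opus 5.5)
The plan is to recast \eqref{eq:KNM_reformulated} as an autonomous ODE system $\dot{y}=\Phi(y)$ on the finite-dimensional space $X\times X\cong\R^{2N}$, with $y=(v,s)$, and then apply the Cauchy--Lipschitz (Picard--Lindel\"of) theorem. Define
\[
\Phi(v,s):=\left(-\frac{1}{C_m}\bigl(\mathcal{L}v+F(v)+\delta s\bigr),\ \sigma v-\gamma s\right),
\]
where $F(v)$ acts componentwise, $F(v)^k=F(v^k)$. Once $\Phi$ is shown to be locally Lipschitz (in fact $C^1$), Picard--Lindel\"of gives some $T>0$ and a unique solution $y\in C^1([0,T];X\times X)$ with $y(0)=(v_0,s_0)$. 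The statement is local, so no a priori bounds are needed at this stage.

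The main step is the regularity of $\Phi$, which I would split into the linear and nonlinear parts.

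\emph{Linear part.} By \eqref{eq:operator_L}, $\mathcal{L}=\mathcal{L}_i(\mathcal{L}_T|_{X_0})^{-1}\mathcal{L}_e$ is a composition of three linear maps between finite-dimensional spaces. These are well defined by Remark \ref{rem:laplacian}: (ii) gives $\mathcal{L}_e(X)\subset X_0$, and (iii), applied to $\mathcal{L}_T$ with weights $G_i+G_e>0$ on the connected graph, gives invertibility of $\mathcal{L}_T|_{X_0}$. Hence $\mathcal{L}$ is a bounded linear operator, and so is $(v,s)\mapsto(\sigma v-\gamma s,\ \delta s)$. All of these are globally Lipschitz.

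\emph{Nonlinear part.} Since $F\in\mathcal{C}^1(\R)$ by \eqref{eq:assumption_F}, the map $v\mapsto (F(v^k))_{k\in\V}$ is $C^1$ from $\R^N$ to $\R^N$, with diagonal Jacobian $\mathrm{diag}(F'(v^k))$. It is therefore Lipschitz on every bounded set, with constant $\max_{|x|\le R}|F'(x)|$ on the ball of radius $R$, by the mean value theorem applied in each component.

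Combining the two parts, $\Phi\in C^1(\R^{2N};\R^{2N})$ and is locally Lipschitz. Picard--Lindel\"of on a closed ball $\bar B_R(y_0)$ then yields existence on $[0,T]$ with $T=\min\{T_0,\,R/\sup_{\bar B_R}|\Phi|\}$ for any $T_0>0$. Uniqueness follows from Gr\"onwall's inequality applied to the difference of two solutions, both of which remain in a bounded set where $\Phi$ is Lipschitz. Because $\Phi$ is continuous and $y$ is continuous, $\dot{y}=\Phi(y)$ is continuous, which gives the $C^1$ regularity.

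I do not expect a real obstacle here. The only point needing care is the well-definedness and linearity of the bidomain operator $\mathcal{L}$ on all of $X$, and this reduces entirely to Remark \ref{rem:laplacian}. Finally, I would note that $u_i=(\mathcal{L}_T|_{X_0})^{-1}\mathcal{L}_e v+c$ and $u_e=u_i-v$ recover the original potentials. Fixing the constant $c$ so that $u_e\in X_0$ makes them $C^1$ as well, so the solution corresponds to a solution of (KNM).
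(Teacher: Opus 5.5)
Your proposal is correct and takes the same approach as the paper: the paper also notes that the vector field built from the linear bidomain operator $\mathcal{L}$ and the $C^1$ nonlinearity $F$ is locally Lipschitz on $X\times X$, and then applies Picard--Lindel\"of. Your version gives more detail, and your $\Phi$ has the correct sign $-\mathcal{L}v$ and the factor $1/C_m$, both of which the paper's displayed $\mathcal{F}$ omits (this does not affect the argument).
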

\begin{proof} 
	Since $\mathcal{L}$ is linear and $F\in C^1$, the function $\mathcal F: X \times X \to X \times X$ defined by
	\[
	\mathcal{F}(v,s):=\left(\mathcal{L}v - F(v) - \delta s, \sigma v - \gamma s\right)
	\]
	is locally Lipschitz continuous. Thus, the Picard-Lindel\"of Theorem \cite{CL55} guarantees the existence and uniqueness of a local solution.
\end{proof}


\section{Variational Formulation and Global Well-posedness}
\label{sec:variational}

Let now $\mathcal G_\e$ be the graph on the $\e$-lattice, as in \eqref{def:graph_e1} and \eqref{def:graph_e2}, with weights $G_i^\e,G_e^\e$ as in \eqref{eq:coercivity}. Using the shorthand notation: $F(v^\e)(x_k):=F(v^\e(x_k))$, where $F$ satisfies \eqref{eq:assumption_F}, the KNM system can be expressed in terms of the combinatorial Laplacian \eqref{eq:graph_laplacians} as
\begin{equation}
	\label{eq:system_e}
	\left\{
	\begin{aligned}
		C_m \frac{d}{dt}v^\e 
		+ \mathcal L_{G_i^\e} u_i^{\e}
		&= -F(v^\e) - \delta s^\e, \\
		C_m \frac{d}{dt}v^\e 
		- \mathcal L_{G_e^\e} u_e^\e
		&= -F(v^\e) - \delta s^\e, \\
		\frac{d s^\e}{dt} &= \sigma v^\e - \gamma s^\e,
	\end{aligned}
	\right.
\end{equation}
where $u_i^{\e}$ and $u_e^{\e}$ are the intracellular and extracellular potentials, $s^\e$  is the recovery variable and $v^\e =u_i^\e-u_e^\e$ is the membrane potential. Additionally, we introduce:
\begin{align}
	\lambda_F &:= 1+\inf_{x \in \mathbb R}F'(x), \qquad f(x):=F(x)+\lambda_F x, \label{def:lambda}\\
	\varphi(x) &:=\int_0^x f(\rho)\,d\rho =\int_0^x F(\rho)\,d\rho+\frac{\lambda_F}{2}x^2.\label{def:varphi}
\end{align}
Observe that $f$ is a strictly increasing $\mathcal{C}^1$ function satisfying $f'(x)\ge1$ for all $x\in\mathbb R$. Therefore, $\varphi$ is a
strictly convex function with at least quadratic growth, thus satisfying:
\begin{equation}
	\label{eq:phi}
	\varphi(x) \geq \varphi(0)=0, \quad \varphi(x) \geq \dfrac{1}{2}|x|^2, \quad \forall \,x \in \mathbb{R}.
\end{equation}
We recall that  $X^\e := \mathbb{R}^{\V_\e} $, $X_0^\e$ is its zero-mean subspace, $ \mathbf{X}^\e:=X^\e \times X^{\e}_0 \times X^\e.$ For $\mathbf{u},\widehat{\mathbf{u}}\in \mathbf{X}^\e$, $\mathbf u:=(u_{i},u_{e},s)$, $\widehat{\mathbf u}=(\widehat u_i,\widehat u_e,\widehat s)$, we define the norm $\|\mathbf{u} \|_\e^2  := |u_i|_\e^2 + |u_{e}|_\e^2 + |s|_\e^2$ and the bilinear forms:
\begin{align*}
	\mathbf{b}^\e(\mathbf u,\widehat{\mathbf u}) 
	&:= C_m\langle u_i-u_e, \widehat{u}_i - \widehat{u}_e \rangle_\e 
	+ \langle s,\widehat s\rangle_\e,  \\
	\mathbf{a}^\e(\mathbf u,\widehat{\mathbf u}) 
	&:=   \langle \mathcal L_{G_i^\e} u_i, \widehat{u}_i \rangle_\e 
	+ \langle \mathcal L_{G_e^\e} u_e, \widehat{u}_e \rangle_\e 
	+\gamma\langle s,\widehat s\rangle_\e, \\
	\mathbf{g}^\e(\mathbf u, \widehat{\mathbf u}) 
	&:= \lambda_F \langle u_i-u_e, \widehat u_i - \widehat u_e \rangle_\e 
	- \delta \langle s, \widehat u_i - \widehat u_e \rangle_\e 
	+ \sigma \langle u_i-u_e, \widehat s \rangle_\e,
\end{align*}
together with the related quadratic forms:
\[
\mathbf{b}^\e(\mathbf u):= \mathbf{b}^\e(\mathbf u,\mathbf u), \quad \mathbf{a}^\e(\mathbf u):= \mathbf{a}^\e(\mathbf u,\mathbf u),
\]
and the nonlinear terms
\[
\mathbf{\phi}^\e(\mathbf u):=
\e^d\displaystyle\sum_{x_k\in\V_\e} \varphi(u_i(x_k)-u_e(x_k)),
\qquad 
\mathcal{F}^\e(\mathbf u, \widehat{\mathbf u}) := \langle f(u_i-u_e), \widehat u_i - \widehat u_e \rangle_\e.
\]
Taking the scalar product of the first equation in \eqref{eq:system_e} with $\widehat u_i$, the scalar product of the second equation with $\widehat u_e$, the third with $\widehat s$, and adding all terms, we are lead to the following problem: 
We look for a solution $\mathbf{u}^\e : (0,T) \to \mathbf{X}^\e$ of the variational evolution equation:
\begin{equation}
	\label{KNM-abstract}
	\left\{
	\begin{aligned}
		&\frac{d}{dt} \mathbf{b}^\e(\mathbf{u}^\e(t), \widehat{\mathbf{u}}) 
		+ \mathbf{a}^\e(\mathbf{u}^\e(t), \widehat{\mathbf{u}}) 
		+ 	\mathcal{F}^\e(\mathbf{u}^\e(t), \widehat{\mathbf{u}}) = \mathbf{g}^\e(\mathbf{u}^\e(t), \widehat{\mathbf{u}}), \\
		&\mathbf{b}^\e(\mathbf{u}^\e(0), \widehat{\mathbf{u}}) = \mathbf{b}^\e(\mathbf{u}_0^\e, \widehat{\mathbf{u}}),
	\end{aligned}
	\right.
\end{equation}
for any $\widehat{\mathbf{u}} \in \mathbf{X}^\e$ and initial datum $\mathbf{u}_0^\e := (u_{i,0}^\e, u_{e,0}^\e, s_0^\e)\in \mathbf{X}^\e$.

The terms appearing in the variational formulation \eqref{KNM-abstract} share the same structural properties as the corresponding terms in \cite{CFS02,PSCF05}; the next two Lemmas will make this statement precise.
\begin{lemma}
	\label{lemma:g}
	There exists $G>0$ (independent of $\e$) such that the linear  form $\mathbf{g}^\e$ satisfies:
	\begin{equation*}
		(\mathbf{g}^\e(\mathbf{u}, \widehat{\mathbf{u}}))^2 \leq G^2 \mathbf{b}^\e(\mathbf{u}) \mathbf{b}^\e(\widehat{\mathbf{u}}), \quad \forall\, \mathbf{u}, \widehat{\mathbf{u}} \in \mathbf{X}^\e,
	\end{equation*}
\end{lemma}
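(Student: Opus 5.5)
The estimate follows from a term-by-term Cauchy--Schwarz argument in the rescaled inner product $\langle\cdot,\cdot\rangle_\e$. Throughout, write $v:=u_i-u_e$, $\widehat v:=\widehat u_i-\widehat u_e$. By definition of $\mathbf{b}^\e$,
\[
\mathbf{b}^\e(\mathbf u)=C_m|v|_\e^2+|s|_\e^2 ,
\]
so each of $|v|_\e$ and $|s|_\e$ is controlled by $\mathbf{b}^\e(\mathbf u)^{1/2}$:
\[
|v|_\e\le C_m^{-1/2}\,\mathbf{b}^\e(\mathbf u)^{1/2},\qquad |s|_\e\le \mathbf{b}^\e(\mathbf u)^{1/2}.
\]
The same holds for the hatted quantities. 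Here we use that $C_m>0$ is a fixed physical constant.

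Next we bound each of the three terms of $\mathbf{g}^\e$ by Cauchy--Schwarz in $\langle\cdot,\cdot\rangle_\e$:
\[
|\lambda_F\langle v,\widehat v\rangle_\e|\le \lambda_F|v|_\e|\widehat v|_\e\le \frac{\lambda_F}{C_m}\,\mathbf{b}^\e(\mathbf u)^{1/2}\mathbf{b}^\e(\widehat{\mathbf u})^{1/2},
\]
\[
|\delta\langle s,\widehat v\rangle_\e|\le \frac{\delta}{C_m^{1/2}}\,\mathbf{b}^\e(\mathbf u)^{1/2}\mathbf{b}^\e(\widehat{\mathbf u})^{1/2},
\qquad
|\sigma\langle v,\widehat s\rangle_\e|\le \frac{\sigma}{C_m^{1/2}}\,\mathbf{b}^\e(\mathbf u)^{1/2}\mathbf{b}^\e(\widehat{\mathbf u})^{1/2}.
\]
Summing these three bounds gives $|\mathbf{g}^\e(\mathbf u,\widehat{\mathbf u})|\le G\,\mathbf{b}^\e(\mathbf u)^{1/2}\mathbf{b}^\e(\widehat{\mathbf u})^{1/2}$ with
\[
G:=\frac{\lambda_F}{C_m}+\frac{\delta+\sigma}{C_m^{1/2}} .
\]
Squaring yields the claim. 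If one insists on $G>0$ when all constants vanish, one may simply replace $G$ by $G+1$.

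The only point requiring care is the independence of $G$ from $\e$. This is automatic: the weight $\e^d$ appears identically in $\langle\cdot,\cdot\rangle_\e$ and in $\mathbf{b}^\e$, so it cancels in every comparison above. Moreover, $G$ depends only on $\lambda_F$ from \eqref{def:lambda}, on $\delta,\sigma$ from \eqref{eq:assumption_F}, and on $C_m$, none of which depend on $\e$. No graph structure, and in particular neither the conductances $G^\e_{i,e}$ nor \eqref{eq:coercivity}, enters the argument, so no genuine obstacle arises.
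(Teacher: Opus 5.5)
Your proof is correct and follows the paper's argument: the paper simply applies Cauchy--Schwarz to the definitions of $\mathbf{g}^\e$ and $\mathbf{b}^\e$, with $G$ depending only on $C_m,\lambda_F,\delta,\sigma$, and you have written out that computation explicitly, including the observation that the common weight $\e^d$ makes $G$ independent of $\e$. One cosmetic point: write $|\lambda_F|$ instead of $\lambda_F$ in your constant, since \eqref{def:lambda} as literally printed does not guarantee $\lambda_F\ge 0$, although the intended choice does.
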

\begin{proof}
	The inequality follows directly from the definition of $\mathbf{g}^\e$ and $\mathbf{b}^\e$, via Cauchy-Schwarz inequality. The constant $G$ depends on the physical parameters ($C_m, \lambda_F, \delta, \sigma$) and is independent of $\e$. 
\end{proof}
\begin{lemma} 
	\label{lemma:coercive}
	The bilinear forms $\mathbf{a}^\e$ and $\mathbf{b}^\e$ are continuous, symmetric, and positive semidefinite. Moreover, there exists a constant $\nu > 0$ (independent of $\e$) such that:
	\begin{equation}
		\label{eq:ab_coercive}
		\mathbf{a}^\e(\mathbf{u}) + \mathbf{b}^\e(\mathbf{u}) \geq \nu \|\mathbf{u}\|^2_{\e}, \quad \forall \,\,\mathbf{u} \in \mathbf{X}^\e.
	\end{equation}
\end{lemma}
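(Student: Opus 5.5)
The plan is to treat the three components separately. The potential $u_e$ is controlled by the discrete Poincaré inequality \eqref{eq:coercivity}, $u_i$ is recovered through $v=u_i-u_e$ via $\mathbf{b}^\e$, and $s$ is controlled trivially.

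\emph{Qualitative properties.} Symmetry and positive semidefiniteness of $\mathbf{a}^\e$ follow from Remark~\ref{rem:laplacian}(i) applied to $\mathcal L_{G_i^\e}$ and $\mathcal L_{G_e^\e}$, since $\gamma\ge0$ and the $s$-term is a multiple of $\langle s,\widehat s\rangle_\e$. For $\mathbf{b}^\e$ these properties are evident from its definition as a sum of weighted inner products with $C_m>0$. Continuity is automatic because $\mathbf X^\e$ is finite dimensional.

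\emph{Coercivity.} By \eqref{5}, $\langle \mathcal L_{G^\e} u,u\rangle_\e=\e^d\sum_{\{x_j,x_k\}\in\mathcal E_\e}G^\e(x_j,x_k)(u(x_j)-u(x_k))^2$. I will read the conductances as carrying the $\e^{-2}$ scaling, so that this quantity coincides with the corresponding term of $a^\e$ in \eqref{def:ae}; this is the normalization under which \eqref{eq:coercivity} is stated. Then, since $u_e\in X_0^\e$, multiplying \eqref{eq:coercivity} by $\e^d$ gives
\[
\langle \mathcal L_{G_e^\e}u_e,u_e\rangle_\e\ge \bar\alpha\,|u_e|_\e^2 .
\]
For $u_i$, which has no mean constraint, I use
\[
|u_i|_\e^2\le 2|u_i-u_e|_\e^2+2|u_e|_\e^2\le \tfrac{2}{C_m}\mathbf b^\e(\mathbf u)+\tfrac{2}{\bar\alpha}\mathbf a^\e(\mathbf u).
\]
Here the positivity of $\langle \mathcal L_{G_i^\e}u_i,u_i\rangle_\e$ and of $\gamma|s|_\e^2$ lets me drop those terms from $\mathbf a^\e$. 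Finally, $|s|_\e^2\le \mathbf b^\e(\mathbf u)$ directly. Summing the three bounds yields $\|\mathbf u\|_\e^2\le K(\mathbf a^\e(\mathbf u)+\mathbf b^\e(\mathbf u))$ with $K$ depending only on $C_m$ and $\bar\alpha$, and I then set $\nu:=1/K$.

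\emph{Main obstacle.} The delicate point is that $\nu$ must be independent of $\e$, which requires the Poincaré constant $\bar\alpha$ in \eqref{eq:coercivity} to be uniform in $\e$. I take this as given, as stated after \eqref{eq:coercivity}. If it needed justification, I would argue as follows. The lower bound in \eqref{eq:coercivity_strong} reduces it to the unweighted lattice Poincaré inequality with difference quotients $(y_j-y_k)/\e$. Interpreting $\Pi_\e y$ on $\Omega_\e$, this inequality has constant comparable to $\mathrm{diam}(\Omega)^2$, since each pair of cells can be joined by lattice paths of length $O(1/\e)$ with bounded path overlap. The same connectedness of $\mathcal G_\e$ for small $\e$ is what makes the argument work.
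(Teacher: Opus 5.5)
Your proposal is correct and follows the paper's argument: the discrete Poincaré inequality \eqref{eq:coercivity} controls $u_e$, the $\mathbf{b}^\e$ term controls $u_i-u_e$ and hence $u_i$, and $s$ is bounded directly, with only the bookkeeping of constants differing. You also point out that $\langle \mathcal L_{G^\e}u,u\rangle_\e$ must carry the $\e^{-2}$ scaling of \eqref{def:ae} for \eqref{eq:coercivity} to apply with an $\e$-independent $\nu$. This is right, and the paper uses that normalization only implicitly.
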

\begin{proof}  
	The first part of the statement follows directly from the definition of $\mathbf{a}^\e$ and $\mathbf{b}^\e$ and by Remark \ref{rem:laplacian}. Since $u_e \in X_0^\e$, by \eqref{eq:coercivity},
	\[	
	\mathbf{a}^\e(\mathbf{u}) + \mathbf{b}^\e(\mathbf{u}) 
	\geq \bar\alpha |u_e|_\e^2 + C_m |u_i - u_e|_\e^2 + (1+\gamma) |s|_\e^2.
	\]
	Let $C_1 := \min \{ \bar \alpha/2, C_m \}$ and observe that
	\[
	\bar\alpha |u_e|_\e^2 + C_m |u_i - u_e|_\e^2 
	\geq \frac{\bar\alpha}{2} |u_e|_\e^2 + C_1\left(|u_e|_\e^2 + |u_i - u_e|_\e^2 \right)
	\geq \frac{\bar\alpha}{2} |u_e|_\e^2 + \frac{C_1}{2} |u_i|_\e^2,
	\]
	so we can choose the coercivity constant $\nu: = \min \{ \frac{C_1}{2}, 1+\gamma \}$.
\end{proof}
For $\mathbf{u}\in \mathbf{X}^\e$ we define the total energy functional as:
\begin{equation}
	\label{eq:energy}
	E^\e(\mathbf{u}) := \dfrac{1}{2} \mathbf{a}^\e(\mathbf{u}) 
	+ \phi^\e(\mathbf{u}) \geq \mathbf{b}^\e(\mathbf{u}),
\end{equation}
where the inequality follows from \eqref{eq:phi} and \eqref{eq:ab_coercive}. 
Since $\mathbf{a}^\e$ is a continuous bilinear form and $\phi^\e$ is a convex lower semi-continuous functional, $E^\e$ is a proper, convex, and lower semi-continuous functional on $\mathbf{X}^\e$.

\begin{proof}[Proof of Theorem \ref{th:wp-KNM}]
	We first derive estimate \eqref{apriori-estimate}. By testing \eqref{KNM-abstract} with $\widehat{\mathbf{u}} = \partial_t \mathbf{u}^\e$, and exploiting the symmetry of $\mathbf{a}^\e$ and the relation $f = \varphi'$, we derive the following energy identity:
	\begin{equation}
		\label{energy-id-detail}
		\mathbf{b}^\e(\partial_t \mathbf{u}^\e) + \frac{d}{dt} \left( \frac{1}{2}\mathbf{a}^\e(\mathbf{u}^\e) + \phi^\e(\mathbf{u}^\e) \right) = \mathbf{g}^\e(\mathbf{u}^\e, \partial_t \mathbf{u}^\e).
	\end{equation}
	In view of Lemma \ref{lemma:g} and Young's inequality, the right-hand side is bounded by $\frac{G^2}{2} \mathbf{b}^\e(\mathbf{u}^\e) + \frac{1}{2} \mathbf{b}^\e(\partial_t \mathbf{u}^\e)$. Integrating \eqref{energy-id-detail} over the interval $(0, t)$ yields:
	\begin{equation}
		\label{pre-gronwall}
		\frac{1}{2}\int_0^t \mathbf{b}^\e(\partial_t \mathbf{u}^\e) dz + \frac{1}{2}\mathbf{a}^\e(\mathbf{u}^\e(t)) + \phi^\e(\mathbf{u}^\e(t)) \leq E^\e(\mathbf{u}_0^\e) + \frac{G^2}{2} \int_0^t \mathbf{b}^\e(\mathbf{u}^\e) dz.
	\end{equation}
	The uniform coercivity \eqref{eq:ab_coercive}, together with the non-negativity of $\phi^\e$, implies the estimate 
	\[
	\mathbf{b}^\e(\mathbf{u}^\e(t)) \leq \widehat{C}(E^\e(\mathbf{u}_0^\e) + \int_0^t \mathbf{b}^\e(\mathbf{u}^\e) dz.
	\]	 
	Consequently, Gr\"onwall's inequality ensures that $\sup_{t \in [0,T]} \mathbf{b}^\e(\mathbf{u}^\e(t)) \leq C E^\e(\mathbf{u}_0^\e)$. By substituting this result back into \eqref{pre-gronwall}, the individual bounds for $\mathbf{a}^\e, \phi^\e$ and the $L^2$-norm of $(\partial_t v_\e, \partial_t s^\e)$ are recovered from the non-negativity of the remaining terms on the left-hand side, thereby concluding \eqref{apriori-estimate}.
	
	In Proposition \ref{prop:local}, we stated the local existence and uniqueness of a solution of \eqref{KNM-abstract} on a maximal interval $[0, t_{\max})$. If $t_{\max} < T$, the norm $\|\mathbf{u}^\e(t)\|_\e$ would necessarily blow up as $t \to t_{\max}^-$. However, the uniform energy estimate \eqref{apriori-estimate} implies:
	\[ 
	\sup_{t \in [0, t_{\text{max}} )} \left[ \mathbf{b}^\e(\mathbf{u}^\e(t)) + \mathbf{a}^\e(\mathbf{u}^\e(t)) + \mathbf{\phi}^\e(\mathbf{u}^\e(t)) \right]\leq C\left( \mathbf{b}^\e(\mathbf{u}_0^\e) + \mathbf{\phi}^\e(\mathbf{u}_0^\e) + \mathbf{a}^\e(\mathbf{u}_0^\e) \right).
	\]
	This uniform bound prevents finite-time blow-up, allowing the local solution to be extended up to $T$ by standard ODE theory \cite{CL55}.
	
	Finally, the minimum energy property is a direct consequence of the Kirchhoff constraint $\mathcal{L}_{G_i^\e} u_i^{\e} + \mathcal{L}_{G_e^\e} u_e^{\e} = 0$, which acts as the Euler-Lagrange optimality condition for $\mathbf{a}^\e$ under the relation $u_i^{\e} - u_e^{\e} = v^\e$.
\end{proof}


\section{Time-discretization and uniform estimates}

Fix a final time $T > 0$ and let $\tau > 0$ denote the time step size. We define a uniform partition of the interval $[0, T]$ as $\mathcal{P}_\tau = \{ 0 = t_0 < t_1 < \dots < t_N = T \}$, where $t_n = n\tau$. Given that the convex functional $\mathbf{\phi}^\e$ is finite on the whole space $\mathbf{X}^\e$, its effective domain is simply $D(\mathbf{\phi}^\e) = \mathbf{X}^\e$. Our objective is to construct a time-discrete approximation $\{ \mathbf{U}_n^{\e,\tau} \}_{n=0}^N \subset \mathbf{X}^\e$ of the KNM solution $\mathbf{u}^\e = (u_i^{\e}, u_e^{\e}, s^\e)$ at each time step $t = t_n$, by employing a semi-implicit Euler scheme.
\vskip0.2cm
\noindent
\textbf{Semi-implicit Euler scheme $(\mathcal{P}^{\e,\tau})$.} 
\textit{Starting from the initial datum $\mathbf{U}_0^{\e,\tau} := \mathbf{u}_0^\e$, we seek a sequence $\{\mathbf{U}_n^{\e,\tau}\}_{n=1}^N \subset \mathbf{X}^\e$ that recursively solves the following system for each $n \in \{ 1, \dots, N \}$, given the previous state $\mathbf{U}_{n-1}^{\e,\tau}$:}
\begin{equation}
	\label{P-eps-tau}
	\mathbf{b}^\e \left( \frac{\mathbf{U}_n^{\e,\tau} - \mathbf{U}_{n-1}^{\e,\tau}}{\tau}, \widehat{\mathbf{U}} \right) + \mathbf{a}^\e(\mathbf{U}_n^{\e,\tau}, \widehat{\mathbf{U}}) + \mathcal{F}^\e(\mathbf{U}_n^{\e,\tau}, \widehat{\mathbf{U}}) = \mathbf{g}^\e(\mathbf{U}_{n-1}^{\e,\tau}, \widehat{\mathbf{U}}), \quad \forall\,\, \widehat{\mathbf{U}} \in \mathbf{X}^\e.
\end{equation}
\vskip0.2cm
\noindent
\textbf{Minimizing movement problem $(\mathcal{P}^{\e,\tau}_n)$.} \textit{For each $n \in \{ 1, \dots, N \}$, find $\mathbf{U}_n^{\e,\tau} \in \mathbf{X}^\e$ which attains the minimum $\min \{ \Psi_{n}^{\e,\tau}(\mathbf{U}) : \mathbf{U} \in \mathbf{X}^\e \}$, where:}
\begin{equation}
	\label{MM-eps}
	\Psi_{n}^{\e,\tau}(\mathbf{U}) = \frac{1}{2\tau} \mathbf{b}^\e\left(\mathbf{U} - \mathbf{U}_{n-1}^{\e,\tau}\right) + \frac{1}{2} \mathbf{a}^\e(\mathbf{U}) + \mathbf{\phi}^\e(\mathbf{U}) - \mathbf{g}^\e\left(\mathbf{U}_{n-1}^{\e,\tau}, \mathbf{U}\right).
\end{equation}
\begin{proposition}
	For a fixed $\tau > 0$ and any given $\mathbf{U}_{n-1}^{\e,\tau} \in \mathbf{X}^\e$, the incremental minimization problem $(\mathcal{P}_n^{\e,\tau})$ admits a unique minimizer $\mathbf{U}_n^{\e,\tau} \in \mathbf{X}^\e$ for each $n \in \{ 1, \dots, N \}$. Furthermore, the sequence $\{\mathbf{U}_n^{\e,\tau}\}_{n=0}^N$ generated by these minimizations satisfies the Euler scheme $(\mathcal{P}^{\e,\tau})$. Conversely, any solution to $(\mathcal{P}^{\e,\tau})$ minimizes the functional $\Psi_n^{\e,\tau}$ at each time step.
\end{proposition}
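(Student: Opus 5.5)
We argue by the direct method on the finite-dimensional space $\mathbf{X}^\e$, followed by a first-order optimality argument. Fix $n$ and $\mathbf{U}_{n-1}^{\e,\tau}$.

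\emph{Existence and uniqueness.} $\Psi_n^{\e,\tau}$ is continuous on $\mathbf{X}^\e$: the quadratic forms are continuous, $\varphi\in\mathcal C^1$, and $\mathbf{g}^\e(\mathbf{U}_{n-1}^{\e,\tau},\cdot)$ is linear. For coercivity we set $c:=\min\{1/\tau,1\}$ and use Lemma~\ref{lemma:coercive}:
\[
\tfrac{1}{2\tau}\mathbf{b}^\e(\mathbf{U}-\mathbf{U}_{n-1}^{\e,\tau})+\tfrac12\mathbf{a}^\e(\mathbf{U})\ge \tfrac{c}{4}\big(\mathbf{a}^\e(\mathbf{U})+\mathbf{b}^\e(\mathbf{U})\big)-\tfrac{c}{2\tau}\mathbf{b}^\e(\mathbf{U}_{n-1}^{\e,\tau}),
\]
which follows from $\mathbf{b}^\e(\mathbf{U}-\mathbf{W})\ge \tfrac12\mathbf{b}^\e(\mathbf{U})-\mathbf{b}^\e(\mathbf{W})$. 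The right-hand side is therefore at least $\tfrac{c\nu}{4}\|\mathbf{U}\|_\e^2-C$. Since $\phi^\e\ge 0$ and the linear term is bounded by $C\|\mathbf{U}\|_\e$ (Lemma~\ref{lemma:g}), $\Psi_n^{\e,\tau}$ is coercive, so a minimizer exists.

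For uniqueness we check strict convexity. The maps $\mathbf{U}\mapsto \tfrac{1}{2\tau}\mathbf{b}^\e(\mathbf{U}-\mathbf{U}_{n-1}^{\e,\tau})+\tfrac12\mathbf{a}^\e(\mathbf{U})$ and $\phi^\e$ are convex, and the linear term is affine. The first map is strictly convex because its quadratic part $\tfrac{1}{2\tau}\mathbf{b}^\e+\tfrac12\mathbf{a}^\e$ dominates $\tfrac{c}{2}(\mathbf{a}^\e+\mathbf{b}^\e)\ge\tfrac{c\nu}{2}\|\cdot\|_\e^2$, hence is positive definite. Thus $\Psi_n^{\e,\tau}$ is strictly convex and the minimizer is unique.

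\emph{Equivalence with the Euler scheme.} $\Psi_n^{\e,\tau}$ is $\mathcal C^1$ on $\mathbf{X}^\e$. Using the symmetry of $\mathbf{a}^\e$ and $\mathbf{b}^\e$ and $\varphi'=f$, its Gateaux derivative in direction $\widehat{\mathbf{U}}$ is
\[
\mathbf{b}^\e\Big(\tfrac{\mathbf{U}-\mathbf{U}_{n-1}^{\e,\tau}}{\tau},\widehat{\mathbf{U}}\Big)+\mathbf{a}^\e(\mathbf{U},\widehat{\mathbf{U}})+\mathcal{F}^\e(\mathbf{U},\widehat{\mathbf{U}})-\mathbf{g}^\e(\mathbf{U}_{n-1}^{\e,\tau},\widehat{\mathbf{U}}).
\]
Here we use that $\frac{d}{dh}\phi^\e(\mathbf{U}+h\widehat{\mathbf{U}})|_{h=0}=\langle f(u_i-u_e),\widehat u_i-\widehat u_e\rangle_\e$, the last equality holding by the chain rule on each node.

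Since $\mathbf{X}^\e$ is a linear space, we may take $\widehat{\mathbf{U}}$ arbitrary in $\mathbf{X}^\e=X^\e\times X_0^\e\times X^\e$. The constraint $u_e\in X_0^\e$ is built into the space, so variations stay admissible. At the minimizer the derivative vanishes for every $\widehat{\mathbf{U}}\in\mathbf{X}^\e$, which is exactly \eqref{P-eps-tau}. Conversely, if $\mathbf{U}$ solves \eqref{P-eps-tau}, it is a critical point of the convex $\mathcal C^1$ functional $\Psi_n^{\e,\tau}$, hence a minimizer. By uniqueness, induction on $n$ yields the whole sequence and the two formulations coincide step by step.

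The only step requiring care is the coercivity and strict convexity of $\frac{1}{2\tau}\mathbf{b}^\e+\frac12\mathbf{a}^\e$, because $\mathbf{b}^\e$ alone is degenerate: it only controls $u_i-u_e$ and $s$. This is handled by Lemma~\ref{lemma:coercive}, which in turn relies on the zero-mean condition on $u_e$ and the Poincaré inequality \eqref{eq:coercivity}.
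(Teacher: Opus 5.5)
Your proof is correct and follows the same route as the paper: coercivity of $\Psi_n^{\e,\tau}$ from Lemma~\ref{lemma:coercive}, strict convexity for uniqueness, the Euler--Lagrange equation to recover \eqref{P-eps-tau}, and convexity for the converse.

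The one step where you differ is an improvement. The paper derives strict convexity of $\Psi_n^{\e,\tau}$ from strict convexity of $\phi^\e$. However, $\phi^\e$ depends only on $u_i-u_e$ (for instance, it does not depend on $s$), so it is merely convex on $\mathbf{X}^\e$. Your justification, positive definiteness of $\frac{1}{2\tau}\mathbf{b}^\e+\frac12\mathbf{a}^\e\ge\frac{c\nu}{2}\|\cdot\|_\e^2$, is the correct one.

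There is one harmless slip. In your coercivity display the last term should be $-\frac{1}{2\tau}\mathbf{b}^\e(\mathbf{U}_{n-1}^{\e,\tau})$ rather than $-\frac{c}{2\tau}\mathbf{b}^\e(\mathbf{U}_{n-1}^{\e,\tau})$. As written, the inequality fails for $\tau>2$, $\gamma=0$ and $\mathbf{U}=\mathbf{U}_{n-1}^{\e,\tau}=(0,0,s)$. Since only a $\mathbf{U}$-independent constant is needed, your coercivity conclusion is unaffected.
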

\begin{proof} 
	Fix $n \in \{ 1, \dots, N \}$ and consider the functional $\Psi_n^{\e,\tau}(\mathbf{U})$ defined in \eqref{MM-eps}. By Lemma \ref{lemma:coercive}, the quadratic form $\mathbf{a}^\e + \mathbf{b}^\e$ is coercive on $\mathbf{X}^\e$. Since $\mathbf{\phi}^\e$ is a strictly convex functional, $\Psi_n^{\e,\tau}$ is strictly convex and coercive on $\mathbf{X}^\e$. This implies the existence and uniqueness of a minimizer $\mathbf{U}_n^{\e,\tau}$. The first-order optimality condition for this minimization problem reads:
	\[
	\frac{1}{\tau} \mathbf{b}^\e(\mathbf{U}_n^{\e,\tau} - \mathbf{U}_{n-1}^{\e,\tau}, \mathbf{W}) 
	+ \mathbf{a}^\e(\mathbf{U}_n^{\e,\tau}, \mathbf{W}) 
	+ \langle \mathbf{\phi}'(\mathbf{U}_n^{\e,\tau}), \mathbf{W} \rangle - \mathbf{g}^\e(\mathbf{U}_{n-1}^{\e,\tau}, \mathbf{W}) = 0,
	\]
	for all $\mathbf{W} \in \mathbf{X}^\e$. Recognizing that $\langle \mathbf{\phi}'(\mathbf{U}), \mathbf{W} \rangle = \mathcal{F}^\e(\mathbf{U}, \mathbf{W})$, this identity coincides with the weak formulation of the Euler scheme $(\mathcal{P}^{\e,\tau})$. Conversely, any solution of the Euler scheme satisfies this Euler-Lagrange condition, which implies the minimization of $\Psi_n^{\e,\tau}$.
\end{proof}

In order to analyze the convergence of the discrete scheme as $\tau \to 0$, we extend the sequence of minimizers $\{\mathbf{U}_n^{\e,\tau}\}_{n=0}^N$ to the continuous interval $[0,T]$. Specifically, for $t \in ((n-1)\tau, n\tau]$ and each $n \in \{1, \dots, N\}$, we introduce the following time-interpolants:
\begin{itemize}
	\item The piecewise linear interpolant: $\widetilde{\mathbf{U}}^{\e, \tau}(t) := (1 - \ell(t))\mathbf{U}_{n-1}^{\e, \tau} + \ell(t)\mathbf{U}_n^{\e, \tau}$, where $ \ell(t) = \frac{t}{\tau} - (n-1)$.
	\item The right-continuous piecewise constant interpolant: $\overline{\mathbf{U}}^{\e, \tau}(t) := \mathbf{U}_n^{\e, \tau}.$
	\item The left-continuous piecewise constant interpolant: $\underline{\mathbf{U}}^{\e, \tau}(t) := \mathbf{U}_{n-1}^{\e, \tau}.$
\end{itemize}
By construction, the linear interpolant $\widetilde{\mathbf{U}}^{\e, \tau}$ is differentiable a.e. in $(0,T)$, with its time derivative given by:
\begin{equation*}\label{total-derivative-interpolants}
	\frac{d}{dt} \widetilde{\mathbf{U}}^{\e, \tau}(t) = \frac{\overline{\mathbf{U}}^{\e, \tau}(t) - \underline{\mathbf{U}}^{\e, \tau}(t)}{\tau}, \quad \text{for } t \in ((n-1)\tau, n\tau).
\end{equation*}
Since the KNM system \eqref{KNM-abstract} shares the same structural properties (Lemma \ref{lemma:g} and Lemma \ref{lemma:coercive}) as the micro and macroscopic variational formulation in \cite{PSCF05}, we can employ the $\e$-uniform estimates for the time-discrete solutions. We recall here the main estimates, referring to \cite[Section 5]{PSCF05} for all proofs.
\smallskip

\noindent \textbf{Stability estimates.} Let $\{\mathbf{U}_n^{\e,\tau}\}_{n=1}^N$ be the sequence of minimizers. Then there exists a constant $C > 0$ (independent of $\tau$ and $\e$) such that:
\begin{equation}
	\label{stability-estimate-knm}
	\sum_{n=1}^{N} \tau \mathbf{b}^\e\left(\frac{\mathbf{U}_n^{\e,\tau} - \mathbf{U}_{n-1}^{\e,\tau}}{\tau}\right) + \sum_{n=1}^{N} \mathbf{a}^\e(\mathbf{U}_n^{\e,\tau} - \mathbf{U}_{n-1}^{\e,\tau}) + \max_{0 \le n \le N} E^\e(\mathbf{U}_n^{\e,\tau}) \le C E^\e(\mathbf{u}_0^\e).
\end{equation}
\smallskip

\noindent \textbf{A continuous variational inequality.} The time interpolants $\widetilde{\mathbf{U}}^{\e, \tau}$, $\overline{\mathbf{U}}^{\e, \tau}$, and $\underline{\mathbf{U}}^{\e, \tau}$ satisfy the following uniform estimate:
\begin{equation}\label{continuous-stability-tilde}
	\begin{split}
		\int_0^T \left[ \mathbf{b}^\e\left(\frac{d}{dt} \widetilde{\mathbf{U}}^{\e, \tau}(t)\right)\right. &+\left. \frac{1}{\tau}\mathbf{a}^\e(\overline{\mathbf{U}}^{\e, \tau}(t) - \underline{\mathbf{U}}^{\e, \tau}(t)) \right] dt \\
		&+ \sup_{t \in [0,T]} E^\e(\overline{\mathbf{U}}^{\e, \tau}(t)) \le C E^\e(\mathbf{u}_0^\e).
	\end{split}
\end{equation}
\smallskip

\noindent \textbf{A Gronwall-type estimate for the error.} 
Let $(\widetilde{\mathbf{U}}^{\e, \tau}, \overline{\mathbf{U}}^{\e, \tau})$ and $(\widetilde{\mathbf{U}}^{\e, \eta}, \overline{\mathbf{U}}^{\e, \eta})$ be the interpolants associated with time steps $\tau, \eta > 0$ for a fixed parameter $\e > 0$. There exists a constant $C = C(T, G) > 0$ such that:
\begin{equation}
	\label{cauchy-estimate-knm}
	\sup_{t \in [0,T]} \mathbf{b}^\e(\widetilde{\mathbf{U}}^{\e, \tau}(t) - \widetilde{\mathbf{U}}^{\e, \eta}(t)) + \int_0^T \mathbf{a}^\e(\overline{\mathbf{U}}^{\e, \tau}(t) - \overline{\mathbf{U}}^{\e, \eta}(t)) \, dt \le C(\tau + \eta) E^\e(\mathbf{u}_0^\e).
\end{equation}

\begin{lemma}
	\label{lemma:tau_convergence}
	For fixed $\e>0$, let $\mathbf{u}^\e = (u_i^\e,u_e^\e,s^\e):(0,T)\to\V_0^\e$ be the unique solution of \eqref{KNM-abstract}. For all sequences $(\tau_k)_{k \in \mathbb{N}}$, $\tau_k\to 0$, 
	\begin{align}
		\widetilde{\mathbf{U}}^{\e, \tau_k} \to \mathbf{u}^\e\quad \text{strongly in }L^2(0,T; \V_0^\e),\label{eq:strongL2}\\
		\left(\widetilde V^{\e,\tau_k},\widetilde s^{\e,\tau_k}\right) \to \left(v^\e,s^\e\right)\quad 
		\text{strongly in }C^0([0,T]; X^\e \times X^\e), \label{eq:strongC0}\\
		\frac{d}{dt}\left(\widetilde V^{\e,\tau_k},\widetilde s^{\e,\tau_k}\right) \to \frac{d}{dt}\left(v^\e,s^\e\right)\quad 
		\text{weakly in }L^2(0,T; X^\e \times X^\e),\label{eq:weakL2}
	\end{align}	
	where $\widetilde V^{\e,\tau}:= \widetilde{U}_i^{\e, \tau}-\widetilde{U}_e^{\e, \tau}$ and $v^\e:=u_i^\e -u_e^\e$.
\end{lemma}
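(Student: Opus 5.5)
Fix $\e>0$; then $\mathbf{X}^\e$ is finite dimensional and all norms on it are equivalent, which is the setting in which I will argue. The plan is to use the Cauchy estimate \eqref{cauchy-estimate-knm} to show that the interpolants converge, then identify the limit with $\mathbf{u}^\e$ by passing to the limit in the scheme \eqref{P-eps-tau} and invoking the uniqueness part of Theorem \ref{th:wp-KNM}.

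\textbf{Step 1 (Cauchy property).} By \eqref{cauchy-estimate-knm}, for any two step sizes $\tau_k,\tau_h$:
\begin{itemize}
\item the sequence $\widetilde{\mathbf U}^{\e,\tau_k}$ is Cauchy in $C^0([0,T])$ with respect to the seminorm $\mathbf b^\e$;
\item the sequence $\overline{\mathbf U}^{\e,\tau_k}$ is Cauchy in $L^2(0,T)$ with respect to $\mathbf a^\e$.
\end{itemize}
Since $\mathbf b^\e(\mathbf U)= C_m|U_i-U_e|_\e^2+|s|_\e^2$, the first point gives directly that $(\widetilde V^{\e,\tau_k},\widetilde s^{\e,\tau_k})$ is Cauchy in $C^0([0,T];X^\e\times X^\e)$. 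For the full vector, I combine the two seminorms via Lemma \ref{lemma:coercive}. Note that $\overline{\mathbf U}-\widetilde{\mathbf U}=(1-\ell)(\overline{\mathbf U}-\underline{\mathbf U})$. Estimate \eqref{continuous-stability-tilde} gives $\int_0^T \mathbf a^\e(\overline{\mathbf U}-\underline{\mathbf U})\,dt\le C\tau E^\e(\mathbf u_0^\e)$, so $\int_0^T\mathbf a^\e(\overline{\mathbf U}^{\e,\tau}-\widetilde{\mathbf U}^{\e,\tau})\,dt\to0$. For the $\mathbf b^\e$-part of the same difference, \eqref{continuous-stability-tilde} also gives $\int_0^T\mathbf b^\e(\overline{\mathbf U}-\widetilde{\mathbf U})\,dt\le \tau^2\int_0^T\mathbf b^\e\big(\tfrac{d}{dt}\widetilde{\mathbf U}\big)\,dt\to0$. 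Together with \eqref{eq:ab_coercive}, this shows that $\widetilde{\mathbf U}^{\e,\tau_k}$ is Cauchy in $L^2(0,T;\mathbf X^\e)$. Hence there is a limit $\mathbf w$ in $L^2(0,T;\mathbf X^\e)$, whose $(V,s)$-components are continuous in time. The bound on $\int_0^T\mathbf b^\e(\tfrac{d}{dt}\widetilde{\mathbf U})\,dt$ gives weak compactness of $\tfrac{d}{dt}(\widetilde V,\widetilde s)$ in $L^2$, and the weak limit must be $\tfrac{d}{dt}$ of the limit $(v_w,s_w)$.

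\textbf{Step 2 (identification of the limit).} Rewrite \eqref{P-eps-tau} in interpolant form:
\[
\mathbf b^\e\!\left(\tfrac{d}{dt}\widetilde{\mathbf U},\widehat{\mathbf U}\right)+\mathbf a^\e(\overline{\mathbf U},\widehat{\mathbf U})+\mathcal F^\e(\overline{\mathbf U},\widehat{\mathbf U})=\mathbf g^\e(\underline{\mathbf U},\widehat{\mathbf U}).
\]
Multiply by a test function $\zeta\in C_c^\infty(0,T)$ and integrate in time. The terms pass to the limit as follows:
\begin{itemize}
\item the linear terms pass by the strong and weak convergences of Step 1, since $\overline{\mathbf U}$ and $\underline{\mathbf U}$ have the same $L^2$ limit as $\widetilde{\mathbf U}$;
\item for the nonlinear term, $f$ is continuous and $\overline V$ is uniformly bounded in time (finite dimension plus the energy bound), so $f(\overline V)\to f(v_w)$ strongly by dominated convergence.
\end{itemize}
The initial condition passes through the $C^0$ convergence of $(\widetilde V,\widetilde s)$. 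Thus $\mathbf w$ solves \eqref{KNM-abstract}, and by uniqueness in Theorem \ref{th:wp-KNM}, $\mathbf w=\mathbf u^\e$.

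\textbf{Expected difficulty.} I expect the main subtlety to be the $u_i,u_e$ components individually, since $\mathbf b^\e$ only controls their difference. This is handled by the $\mathbf a^\e$ control in $L^2$ in time combined with the zero-mean constraint on $u_e$ (coercivity \eqref{eq:ab_coercive}). That is exactly why only $L^2$ in time, rather than $C^0$, convergence is claimed for the full vector in \eqref{eq:strongL2}. Since the limit is identified uniquely, the whole sequence converges and no subsequence is needed.
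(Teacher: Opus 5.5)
Your proposal is correct and follows essentially the same route as the paper: a Cauchy property from \eqref{cauchy-estimate-knm} together with the coercivity \eqref{eq:ab_coercive}, derivative bounds from \eqref{continuous-stability-tilde}, passage to the limit in the time-integrated Euler scheme, and identification of the limit through the uniqueness in Theorem \ref{th:wp-KNM}. Your explicit estimate of $\overline{\mathbf U}^{\e,\tau}-\widetilde{\mathbf U}^{\e,\tau}$ in both $\mathbf a^\e$ and $\mathbf b^\e$, via $\overline{\mathbf U}-\widetilde{\mathbf U}=(1-\ell)\tau\frac{d}{dt}\widetilde{\mathbf U}$, fills in a step the paper leaves implicit: \eqref{cauchy-estimate-knm} controls $\mathbf b^\e$ only on differences of the linear interpolants, yet the paper asserts the $L^2$ Cauchy property for the piecewise constant ones.
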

\begin{proof}
	Let $(\tau_k)_{k \in \mathbb{N}}$ be fixed. By definition of $\mathbf{b}^\e$ and \eqref{cauchy-estimate-knm}, $(\widetilde V^{\e,\tau_k})_k$ and $(\widetilde{s}^{\e,\tau_k})_k$ are Cauchy sequences in $C^0([0,T]; X^\e)$. Moreover, by \eqref{eq:ab_coercive} and \eqref{cauchy-estimate-knm}
	\[
	\int_0^T  \left|\overline{\mathbf{U}}^{\e, \tau}(t) - \overline{\mathbf{U}}^{\e, \eta}(t))\right|_\e^2 \, dt \le C(\tau + \eta) E^\e(\mathbf{u}_0^\e),
	\]
	so that $(\overline{\mathbf{U}}^{\e, \tau_k})_k$ is Cauchy in $L^2(0,T;\V_0^\e)$. Finally, by \eqref{continuous-stability-tilde}, $\frac{d}{dt}\widetilde V^{\e,\tau_k}$ and $\frac{d}{dt}\widetilde{s}^{\e,\tau_k}$ are bounded in $L^2(0,T;X^\e)$. Up to extracting further subsequences, not relabeled, we can find $\mathbf{u}^\e = (u_i^\e,u_e^\e,s^\e):(0,T)\to\V_0^\e$ such that \eqref{eq:strongL2}, \eqref{eq:strongC0}, and \eqref{eq:weakL2} hold. Since $(\mathbf{U}^{\e,\tau_k}_n)_n$ is a solution of $(\mathcal{P}^{\e,\tau})$, integrating the Euler Scheme \eqref{P-eps-tau} in time yields
	\begin{equation}
		\label{eq:test}
		\begin{split}
			\int_0^T \mathbf{b}^\e &\left( \frac{d}{dt} \widetilde{\mathbf{U}}^{\tau_k,\e}(t), \widehat{\mathbf{U}}(t) \right) + \mathbf{a}^\e\left(\overline{\mathbf{U}}^{\e,\tau_k}(t), \widehat{\mathbf{U}}(t)\right)dt\\
			& +\int_0^T \mathcal{F}^\e\left(\overline{\mathbf{U}}^{\e,\tau_k}(t), \widehat{\mathbf{U}}(t)\right) -\mathbf{g}^\e\left(\underline{\mathbf{U}}^{\e,\tau_k}(t), \widehat{\mathbf{U}}(t)\right)dt=0,
		\end{split}
	\end{equation}
	for all $\widehat{\mathbf{U}} \in L^2(0,T;\mathbf{X}^\e)$. Since the piecewise constant interpolants of $V_n^{\e,\tau}:= U_{i,n}^{\e, \tau}-U_{e,n}^{\e, \tau}$ converge strongly in $L^\infty(0,T;X^\e)$, we can pass to the limit in \eqref{eq:test} and obtain
	\begin{equation*}
		\begin{split}
			\int_0^T \mathbf{b}^\e \left( \frac{d}{dt} \mathbf{u}^\e (t), \widehat{\mathbf{U}}(t) \right) & + \mathbf{a}^\e\left(\mathbf{u}^\e(t), \widehat{\mathbf{U}}(t)\right)dt\\
			& +\int_0^T \mathcal{F}^\e\left(\mathbf{u}^\e, \widehat{\mathbf{U}}(t)\right) -\mathbf{g}^\e\left(\mathbf{u}^\e(t), \widehat{\mathbf{U}}(t)\right)dt=0.
		\end{split}
	\end{equation*}
	Since $\widehat{\mathbf{U}}$ is arbitrary, we conclude that $\mathbf{u}^\e$ is the solution of \eqref{KNM-abstract}, which is unique by Theorem \ref{th:wp-KNM}. Finally, since the limit is unique and independent of the choice of subsequence, a standard compactness argument ensures that the entire sequence of interpolants $\{\widetilde{\mathbf{U}}^{\e, \tau}\}_{\tau > 0}$ converges to $\mathbf{u}^\e$ as $\tau \to 0$.
\end{proof}
By Lemma \ref{lemma:tau_convergence}, passing to the limit as $\eta \to 0$ in \eqref{cauchy-estimate-knm} immediately yields the following fundamental estimate.
\begin{theorem}
	Let $\mathbf{u}^\e$ be the unique solution of the time-continuous problem \eqref{KNM-abstract} and let $\widetilde{\U}^{\e, \tau}$ be the piecewise linear interpolant of the solutions $\{\U^{\e,\tau}_n\}_{n=0}^N$ of the minimizing movement scheme \eqref{MM-eps}. Then, there exists $C > 0$ (independent of $\tau$ and $\e$) such that:
	\begin{equation}
		\label{final-error-estimate}
		\sup_{t \in [0, T]} \mathbf{b}^\e(\mathbf{u}^\e(t) - \widetilde{\mathbf{U}}^{\e, \tau}(t)) + \int_0^T \mathbf{a}^\e(\mathbf{u}^\e(t) - \overline{\mathbf{U}}^{\e, \tau}(t)) \, dt \le C \tau E^\e(\mathbf{u}_0^\e).
	\end{equation}
\end{theorem}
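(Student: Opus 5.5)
We obtain \eqref{final-error-estimate} by fixing $\tau$ and $\e$ in the Cauchy-type estimate \eqref{cauchy-estimate-knm} and letting the second time step $\eta$ tend to zero along a sequence $\eta_k\to 0$. The right-hand side then becomes $C\tau E^\e(\mathbf{u}_0^\e)$, with the same constant $C=C(T,G)$, which does not depend on $\tau$, $\eta$ or $\e$. So the whole task is to show that the left-hand side passes to the limit in the right way, via lower semicontinuity or actual convergence, using Lemma \ref{lemma:tau_convergence}.

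\textbf{Supremum term.} Since $\mathbf{b}^\e$ only involves $v=u_i-u_e$ and $s$, we have
\[
\mathbf{b}^\e(\widetilde{\U}^{\e,\tau}(t)-\widetilde{\U}^{\e,\eta_k}(t)) = C_m|\widetilde V^{\e,\tau}(t)-\widetilde V^{\e,\eta_k}(t)|_\e^2+|\widetilde s^{\e,\tau}(t)-\widetilde s^{\e,\eta_k}(t)|_\e^2 .
\]
By \eqref{eq:strongC0}, $(\widetilde V^{\e,\eta_k},\widetilde s^{\e,\eta_k})\to(v^\e,s^\e)$ uniformly on $[0,T]$. Hence for every $t$ the left-hand side converges to $\mathbf{b}^\e(\widetilde{\U}^{\e,\tau}(t)-\mathbf{u}^\e(t))$. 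Each of these values is bounded by the $\eta_k$-dependent right-hand side, so taking the limit pointwise and then the supremum over $t$ gives the first term of \eqref{final-error-estimate}.

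\textbf{Integral term.} This is the one step needing care. The relevant facts are:
\begin{itemize}
\item Lemma \ref{lemma:tau_convergence} gives strong $L^2$ convergence for the piecewise linear interpolant $\widetilde{\U}^{\e,\eta_k}$, but the estimate involves the piecewise constant $\overline{\U}^{\e,\eta_k}$.
\item The proof of that lemma shows that $(\overline{\U}^{\e,\eta_k})_k$ is Cauchy in $L^2(0,T;\mathbf{X}^\e)$.
\item \eqref{continuous-stability-tilde} gives $\int_0^T \mathbf{b}^\e(\overline{\U}^{\e,\eta}-\widetilde{\U}^{\e,\eta})\,dt\le \eta^2\int_0^T\mathbf{b}^\e(\tfrac{d}{dt}\widetilde{\U}^{\e,\eta})\,dt\to0$ and $\int_0^T\mathbf{a}^\e(\overline{\U}^{\e,\eta}-\underline{\U}^{\e,\eta})\,dt\le C\eta E^\e(\mathbf{u}_0^\e)\to 0$.
\item By \eqref{eq:ab_coercive}, the sum of these two controls $\int_0^T\|\overline{\U}^{\e,\eta}-\widetilde{\U}^{\e,\eta}\|_\e^2\,dt$, because $\overline{\U}-\widetilde{\U}=(1-\ell)(\overline{\U}-\underline{\U})$ and $0\le 1-\ell\le 1$.
\end{itemize}
Together these show that $\overline{\U}^{\e,\eta_k}$ has the same strong $L^2(0,T;\mathbf{X}^\e)$ limit as $\widetilde{\U}^{\e,\eta_k}$, namely $\mathbf{u}^\e$. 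Alternatively, one can just identify the limit of the Cauchy sequence with $\mathbf{u}^\e$.

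\textbf{Conclusion.} $\mathbf{X}^\e$ is finite dimensional and $\mathbf{a}^\e$ is a continuous quadratic form, so $\mathbf{a}^\e(w)\le c_\e\|w\|_\e^2$. Strong $L^2$ convergence therefore gives
\[
\int_0^T\mathbf{a}^\e(\overline{\U}^{\e,\tau}-\overline{\U}^{\e,\eta_k})\,dt\to\int_0^T\mathbf{a}^\e(\overline{\U}^{\e,\tau}-\mathbf{u}^\e)\,dt .
\]
Lower semicontinuity under weak convergence would also suffice. The constant $c_\e$ enters only through this convergence and never the final bound, so uniformity in $\e$ is preserved. Adding the two limits, and noting that the limit of a sum is at least the sum of the limits in the bound, yields \eqref{final-error-estimate}.
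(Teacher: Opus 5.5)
Your proof is correct and follows the paper's own route. The paper simply lets $\eta\to0$ in the Cauchy estimate \eqref{cauchy-estimate-knm} using Lemma \ref{lemma:tau_convergence}, and you do the same, adding the useful observation that $\overline{\U}^{\e,\eta}$ has the same $L^2(0,T;\mathbf{X}^\e)$ limit $\mathbf{u}^\e$ as $\widetilde{\U}^{\e,\eta}$, since $\overline{\U}-\widetilde{\U}=(1-\ell)(\overline{\U}-\underline{\U})$; the paper leaves this point implicit.
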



\section{The Bidomain Model}
In this section, we collect the main results, obtained in \cite{CFS02, PSCF05}, that characterize the continuous Bidomain model. 
We introduce the continuous state space $\mathbf{X} := H^1(\Omega) \times H_\circ^1(\Omega) \times L^2(\Omega)$, where the second component ensures the uniqueness of the extracellular potential via a zero-mean constraint:
\(
H_\circ^1(\Omega) := \left\{ w \in H^1(\Omega) : \int_{\Omega} w \, dx = 0 \right\}.
\)
For any state $\mathbf{u} = (u_i, u_e, s) \in \mathbf{X}$ and test functions $\widehat{\mathbf{u}} = (\widehat u_i, \widehat u_e, \widehat s) \in \mathbf{X}$, we define the following bilinear forms:
\begin{align*}
	\mathbf{b}(\mathbf{u}, \widehat{\mathbf{u}}) &:= C_m \int_\Omega (u_i - u_e)(\widehat{u}_i - \widehat{u}_e) \, dx + \int_\Omega s \widehat{s} \, dx, \\
	\mathbf{a}(\mathbf{u}, \widehat{\mathbf{u}}) &:= \sum_{\alpha \in \{i,e\}} \int_\Omega \nabla u_\alpha \cdot M_\alpha \nabla \widehat u_\alpha \, dx + \gamma \int_\Omega s \widehat s \, dx, \\
	\mathbf{g}(\mathbf{u}, \widehat{\mathbf{u}}) &:= \lambda_F \int_\Omega (u_i - u_e)(\widehat{u}_i - \widehat{u}_e) \, dx - \delta \int_\Omega s (\widehat{u}_i - \widehat{u}_e) \, dx + \sigma \int_\Omega (u_i - u_e) \widehat{s} \, dx.
\end{align*}
The corresponding quadratic forms are $\mathbf{b}(\mathbf u):=\mathbf{b}(\mathbf{u}, \mathbf{u})$ and $\mathbf{a}(\mathbf u):=\mathbf{a}(\mathbf{u}, \mathbf{u})$. The conductivity matrices $M_i, M_e$ are assumed to satisfy the uniform ellipticity condition:
\[
\mu |y|^2 \le y \cdot M_{i,e}(x) y \le \mu^{-1} |y|^2, \qquad \, \forall\,\,y \in \mathbb{R}^d,\quad \text{for a.e. }x\in \Omega.
\]
We further define the convex potential and the nonlinear form
\[
\phi(\mathbf{u}) := \displaystyle\int_\Omega \varphi(u_i - u_e) \, dx,\qquad 
\mathcal{F}(\mathbf{u}, \widehat{\mathbf{u}}) := \displaystyle\int_\Omega f(u_i - u_e)(\widehat{u}_i - \widehat{u}_e) \, dx.
\]
Considering $\mathbf{u}^\e$ as time-dependent functions with values in the space-dependent functional space $\mathbf{X}$, the problem can be solved by looking for a solution $\mathbf{u}: (0,T) \to \mathbf{X}$ of the abstract variational equation:
\begin{equation}
	\label{Bidomain-abstract}
	\left\{
	\begin{aligned}
		&\frac{d}{dt} \mathbf{b}(\mathbf{u}(t), \widehat{\mathbf{u}}) + \mathbf{a}(\mathbf{u}(t), \widehat{\mathbf{u}}) + \mathcal{F}(\mathbf{u}(t), \widehat{\mathbf{u}}) = \mathbf{g}(\mathbf{u}(t), \widehat{\mathbf{u}}), \\
		&\mathbf{b}(\mathbf{u}(0), \widehat{\mathbf{u}}) = \mathbf{b}(\mathbf{u}_0, \widehat{\mathbf{u}}),
	\end{aligned}
	\right.
\end{equation}
for any $\widehat{\mathbf{u}} \in \mathbf{X}$ and the initial datum $\mathbf{u}_0 := (u_{i,0}, u_{e,0}, s_0)$ is related to the initial transmembrane potential $v_0$ through the compatibility conditions:
\begin{equation}\label{initial-condition-relation1}
	u_{i,0} - u_{e,0}= v_0, \quad \mathbf{a}(\mathbf{u}_0) = \min \left\{ \mathbf{a}(\widehat{\mathbf{u}}) : \widehat{u}_i - \widehat{u}_e = v_0\right\}.
\end{equation}
Regarding the structural properties of \eqref{Bidomain-abstract}, we note that:
\begin{itemize}
	\item The form $\mathbf{b}(\cdot, \cdot)$ is symmetric and induces a semi-norm on $\mathbf{X}$. Its kernel is infinite-dimensional, reflecting the degenerate parabolic nature of the system.
	\item The form $\mathbf{a}(\cdot, \cdot)$ is continuous, symmetric, and coercive on $\mathbf{X}$ due to the ellipticity of the conductivity tensors and the zero-mean constraint on $H_\circ^1(\Omega)$.
\end{itemize}

As in Section 4, we approximate \eqref{Bidomain-abstract} via a recursive minimization procedure. For a given time step $\tau = T/N$, we define the sequence $\{\mathbf{U}_n^\tau\}_{n=1}^N \subset \mathbf{X}$ as follows:
\vskip0.3cm
\noindent
\textbf{Minimizing movement problem $(\mathcal{P}^{\tau}_n)$.} \textit{For each $n \in \{ 1,\dots,N \}$, find $\mathbf{U}_n^{\tau}\in \mathbf{X}$ which attains the minimum $\min \left\{ \Psi_{n}^{\tau}(\mathbf{U}) : \mathbf{U} \in \mathbf{X} \right\}$, where:}
\begin{equation}
	\label{MM-eps-tt}
	\Psi_{n}^{\tau}(\mathbf{U}) := \frac{1}{2\tau}\, \mathbf{b}\left(\mathbf{U} - \mathbf{U}_{n-1}^\tau\right) + \frac{1}{2}\, \mathbf{a}(\mathbf{U}) + \phi(\mathbf{U}) - \mathbf{g}\left(\mathbf{U}_{n-1}^\tau, \mathbf{U}\right).
\end{equation}
For every $n \in \{1, \dots, N\}$, the functional in \eqref{MM-eps-tt} is strictly convex and coercive on $\mathbf{X}$. Consequently, $(\mathcal{P}_n^\tau)$ admits a unique solution $\mathbf{U}_n^\tau \in \mathbf{X}$.

The global existence, stability, and convergence of this scheme are summarized in the following theorems (see \cite{PSCF05} for detailed proofs).
\begin{theorem}
	Let $\mathbf{u}_0 \in \mathbf{X}$ satisfy $\mathbf{b}(\mathbf{u}_0) + \phi(\mathbf{u}_0) + \mathbf{a}(\mathbf{u}_0) < \infty$. Then there exists a unique solution $\mathbf{u} \in C^0([0, T]; \mathbf{X})$ to \eqref{Bidomain-abstract}. Furthermore, the solution satisfies the following a priori estimate:
	\begin{equation}
		\label{eq:stability_continuous_refined}
		\begin{aligned}
			\sup_{t \in [0,T]} \Big( \mathbf{b}(\mathbf{u}(t)) + \phi(\mathbf{u}(t)) + \mathbf{a}(\mathbf{u}(t)) \Big) + \int_0^T \left( \|\partial_t v_\e\|_\e^2 + \|\partial_t s^\e\|_\e^2 \right) \,dt &\\ \le C  (\mathbf{b}(\mathbf{u}_0) + \phi(\mathbf{u}_0) + \mathbf{a}(\mathbf{u}_0)),
		\end{aligned}
	\end{equation}
	and the pair $(u_i, u_e)$ satisfies the minimum energy principle $\mathbf{a}(\mathbf{u}(t)) = \min \left\{ \mathbf{a}(\widehat{\mathbf{u}}) : \widehat{u}_i - \widehat{u}_e\right.$ $\left.= u_i(t) - u_e(t) \right\}$ for all $t \in [0,T]$.
\end{theorem}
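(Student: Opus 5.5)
The plan is to mirror, in the continuous setting, the argument already carried out for the KNM: minimizing movements, $\tau$-uniform estimates, a Cauchy estimate in $\tau$, and identification of the limit. The abstract structure of \eqref{Bidomain-abstract} is the same as that of \eqref{KNM-abstract}, with one difference. Here $\mathbf{b}$ is only a seminorm with infinite-dimensional kernel, so coercivity must come from $\mathbf{a}+\mathbf{b}$ on $\mathbf{X}$.

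\textbf{Step 1: structural properties.} First I would check that $\mathbf{a}+\mathbf{b}$ is coercive on $\mathbf{X}$. Uniform ellipticity of $M_e$ and the Poincar\'e--Wirtinger inequality on $H^1_\circ(\Omega)$ control $\|u_e\|_{H^1}$. The $L^2$ norm of $u_i$ is then controlled by $|u_i-u_e|_{L^2}+|u_e|_{L^2}$, and $\nabla u_i$ by $M_i$. The form $s$ is controlled through $\gamma$ and $\mathbf{b}$. Second, the bound $\mathbf{g}(\mathbf{u},\widehat{\mathbf{u}})^2\le G^2\mathbf{b}(\mathbf{u})\mathbf{b}(\widehat{\mathbf{u}})$ follows from Cauchy--Schwarz exactly as in Lemma \ref{lemma:g}.

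\textbf{Step 2: discrete estimates.} Testing the Euler--Lagrange equation of $(\mathcal{P}^\tau_n)$ with $\mathbf{U}_n^\tau-\mathbf{U}_{n-1}^\tau$ and using convexity of $\phi$, namely $\phi(\mathbf{U}_n)-\phi(\mathbf{U}_{n-1})\le \mathcal{F}(\mathbf{U}_n,\mathbf{U}_n-\mathbf{U}_{n-1})$, gives the analogue of \eqref{stability-estimate-knm}. The $\mathbf{g}$ term is absorbed by Young's inequality. A discrete Gr\"onwall argument on $\mathbf{b}(\mathbf{U}_n)\lesssim E(\mathbf{U}_n)$ then closes the estimate.

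Comparing two schemes with steps $\tau,\eta$ yields the Cauchy estimate \eqref{cauchy-estimate-knm} with $C(\tau+\eta)E(\mathbf{u}_0)$. The key ingredients are:
\begin{itemize}
\item testing the difference of the schemes with $\overline{\mathbf{U}}^\tau-\overline{\mathbf{U}}^\eta$;
\item monotonicity of $f$;
\item the bound on $\int \frac1\tau\mathbf{a}(\overline{\mathbf{U}}-\underline{\mathbf{U}})$.
\end{itemize}
This is the step I expect to be the main obstacle. The degeneracy of $\mathbf{b}$ means the error in $u_i,u_e$ individually is controlled only through $\int\mathbf{a}$, so the cross terms must be handled by the $\mathbf{g}$-bound involving only $\mathbf{b}$. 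I would follow the abstract argument of \cite{PSCF05} for this step.

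\textbf{Step 3: passage to the limit and uniqueness.} By the Cauchy estimate, $\overline{\mathbf{U}}^\tau$ converges in $L^2(0,T;\mathbf{X})$. In addition, $(\widetilde V^\tau,\widetilde s^\tau)$ converges in $C^0([0,T];L^2)$, and the time derivatives converge weakly in $L^2(0,T;L^2)$. To pass to the limit in the integrated scheme, I use the $L^2$ convergence of $V$ together with a.e. convergence along subsequences, plus the growth bound on $f$ implied by the $\phi$-bound, or alternatively Minty's trick by monotonicity. This yields a solution with $\mathbf{u}\in C^0([0,T];\mathbf{X})$. The estimate \eqref{eq:stability_continuous_refined} follows by lower semicontinuity.

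Uniqueness comes from testing the difference of two solutions with their difference. Monotonicity of $f$ and the $\mathbf{g}$-bound reduce this to a Gr\"onwall inequality on $\mathbf{b}(\mathbf{u}_1-\mathbf{u}_2)$, which together with $\int\mathbf{a}$ forces the two solutions to coincide. Finally, the minimum energy principle holds because testing with $\widehat{\mathbf{u}}=(w,w,0)$, $w\in H^1$ (adjusting the mean), kills the $\mathbf{b}$, $\mathcal{F}$ and $\mathbf{g}$ terms. What remains is $\mathbf{a}(\mathbf{u}(t),(w,w,0))=0$, which is exactly the Euler--Lagrange condition of the convex constrained minimization of $\mathbf{a}$ under $\widehat u_i-\widehat u_e=v(t)$.
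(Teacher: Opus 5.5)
Your proposal takes essentially the same route the paper relies on. The paper gives no independent proof of this theorem and defers to \cite{PSCF05}, whose minimizing-movement argument is the one Section 4 replicates for the KNM: $\tau$-uniform stability estimates, the Cauchy estimate in $\tau,\eta$, and identification of the limit. Your uniqueness argument via Gr\"onwall and your Euler--Lagrange argument for the minimum energy principle match what the paper does for Theorem~\ref{th:wp-KNM}, and you likewise defer the delicate Cauchy estimate to \cite{PSCF05}.

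One small repair is needed. The inequality $\mathbf{b}\lesssim E$ fails when $\gamma=0$, which \eqref{eq:assumption_F} allows: then $E=\frac12\mathbf{a}+\phi$ does not control $\|s\|_{L^2}^2$, as the test state $(0,0,s)$ shows. So run the discrete Gr\"onwall argument on $E+\mathbf{b}$ instead; this is exactly why $\mathbf{b}(\mathbf{u}_0)$ appears on the right-hand side of the stated estimate.
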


\begin{theorem}
	Let $\mathbf{u}$ be the unique solution of \eqref{Bidomain-abstract} and let $\mathbf{U}^{\tau}$ be its piecewise linear interpolant. There exists a constant $C > 0$, independent of $\tau$, such that:
	\begin{equation}
		\label{final-error-estimate-bidomain}
		\sup_{t \in [0,T]} \mathbf{b}(\mathbf{u}(t) - \mathbf{U}^{\tau}(t)) + \int_0^T \mathbf{a}(\mathbf{u}(t) - \mathbf{U}^{\tau}(t)) \, dt \le C \tau (\mathbf{b}(\mathbf{u}_0) + \phi(\mathbf{u}_0) + \mathbf{a}(\mathbf{u}_0)).
	\end{equation}
\end{theorem}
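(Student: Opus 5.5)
The estimate \eqref{final-error-estimate-bidomain} follows from a Cauchy argument in $\tau$ for the minimizing movement scheme $(\mathcal{P}_n^\tau)$, mirroring what was done for the KNM in Section 4; the needed structure is recalled from \cite{PSCF05}. The first step is a Gronwall-type comparison between the interpolants built with two step sizes $\tau,\eta>0$. This rests on two facts that are the continuous analogues of Lemma \ref{lemma:g} and Lemma \ref{lemma:coercive}. The first is $\mathbf{g}(\mathbf{u},\widehat{\mathbf{u}})^2\le G^2\mathbf{b}(\mathbf{u})\mathbf{b}(\widehat{\mathbf{u}})$, which follows from Cauchy--Schwarz. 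The second is $\mathbf{a}+\mathbf{b}\ge \nu\|\cdot\|^2_{\mathbf{X}}$, which uses ellipticity of $M_{i,e}$ and Poincar\'e--Wirtinger on $H^1_\circ(\Omega)$ for $u_e$; one then controls $u_i$ through $u_i-u_e$ in $L^2$ together with $\nabla u_i$ in $L^2$.

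Write the Euler--Lagrange equations of $(\mathcal{P}^\tau_n)$ and $(\mathcal{P}^\eta_m)$ in integrated form, as in \eqref{eq:test}, for the interpolants $\widetilde{\mathbf{U}}^\tau,\overline{\mathbf{U}}^\tau,\underline{\mathbf{U}}^\tau$. Subtract the two and test with $\overline{\mathbf{U}}^\tau-\overline{\mathbf{U}}^\eta$. The terms are handled as follows.
\begin{itemize}
\item The nonlinear term has a good sign by monotonicity of $f=\varphi'$.
\item The term $\mathbf{a}$ yields $\int\mathbf{a}(\overline{\mathbf{U}}^\tau-\overline{\mathbf{U}}^\eta)$.
\item The time-derivative term gives $\tfrac{d}{dt}\tfrac12\mathbf{b}(\widetilde{\mathbf{U}}^\tau-\widetilde{\mathbf{U}}^\eta)$, up to a remainder from replacing $\overline{\mathbf{U}}$ by $\widetilde{\mathbf{U}}$. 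This remainder is bounded by $\tau\,\mathbf{b}(\tfrac{d}{dt}\widetilde{\mathbf{U}}^\tau)$-type quantities.
\item The term $\mathbf{g}$ is bounded using its $\mathbf{b}$-continuity, with $\underline{\mathbf{U}}-\widetilde{\mathbf{U}}$ again estimated by $\tau$ times the discrete velocity.
\end{itemize}
The continuous stability estimates, which are analogues of \eqref{stability-estimate-knm} and \eqref{continuous-stability-tilde}, bound $\int_0^T\mathbf{b}(\tfrac{d}{dt}\widetilde{\mathbf{U}}^\tau)\,dt$ by $C(\mathbf{b}+\phi+\mathbf{a})(\mathbf{u}_0)$. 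Gronwall then gives
\[
\sup_t \mathbf{b}(\widetilde{\mathbf{U}}^\tau-\widetilde{\mathbf{U}}^\eta)+\int_0^T\mathbf{a}(\overline{\mathbf{U}}^\tau-\overline{\mathbf{U}}^\eta)\,dt\le C(\tau+\eta)\bigl(\mathbf{b}(\mathbf{u}_0)+\phi(\mathbf{u}_0)+\mathbf{a}(\mathbf{u}_0)\bigr).
\]

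The second step is to let $\eta\to0$. By the previous theorem and the argument of Lemma \ref{lemma:tau_convergence}, $\widetilde{\mathbf{U}}^\eta\to\mathbf{u}$ in the $\mathbf{b}$-seminorm uniformly in time, and $\overline{\mathbf{U}}^\eta\to\mathbf{u}$ in $L^2(0,T;\mathbf{X})$. Since $\mathbf{a}$ is convex and continuous it is weakly lower semicontinuous, so passing to the limit yields \eqref{final-error-estimate-bidomain} with $\overline{\mathbf{U}}^\tau$ in the $\mathbf{a}$-integral. To state it for the piecewise linear $\mathbf{U}^\tau$, use $\int_0^T\mathbf{a}(\widetilde{\mathbf{U}}^\tau-\overline{\mathbf{U}}^\tau)\,dt\le \tau\sum_n\mathbf{a}(\mathbf{U}^\tau_n-\mathbf{U}^\tau_{n-1})$, which is at most $C\tau$ times the initial energy by the stability estimate.

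I expect the main obstacle to be the degeneracy of $\mathbf{b}$: it controls only $u_i-u_e$ and $s$, not the individual potentials. The remainder terms therefore have to be arranged so that everything is absorbed either in $\mathbf{b}$ or in the $\int\mathbf{a}$ term, and never requires an $L^2$ bound on $u_i$ or $u_e$ separately. The key device here is the interpolation inequality with the $\tau^{-1}\mathbf{a}(\overline{\mathbf{U}}-\underline{\mathbf{U}})$ control from \eqref{continuous-stability-tilde}; I would follow \cite[Section 5]{PSCF05} closely at this step.
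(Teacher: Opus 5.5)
Your proposal is correct and follows essentially the same route as the paper. The paper gives no independent proof of this statement and defers to \cite{PSCF05}, but in Section 4 it uses exactly your structure: a Cauchy estimate in the step sizes as in \eqref{cauchy-estimate-knm}, followed by the limit $\eta\to0$ as in Lemma \ref{lemma:tau_convergence}. Your extra step of replacing $\overline{\mathbf{U}}^\tau$ by the piecewise linear interpolant in the $\mathbf{a}$-integral, via $\int_0^T\mathbf{a}(\widetilde{\mathbf{U}}^\tau-\overline{\mathbf{U}}^\tau)\,dt=\frac{\tau}{3}\sum_n\mathbf{a}(\mathbf{U}^\tau_n-\mathbf{U}^\tau_{n-1})$ and the stability estimate, is correct and makes explicit a point the paper leaves implicit.
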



\section{$\Gamma$-convergence of the stationary problem}
In order to connect the discrete functions defined on the lattice $\V_\e$ with functions in $L^2(\Omega)$, we identify functions on $\V_\e$ with piecewise constant functions, via the spatial interpolation operator $\Pi_\e :X^\e \to L^2(\Omega)$
\begin{equation}
	\label{def:interpol}
	(\Pi_\e w)(x)
	:=
	\sum_{x_k\in \V_\e}
	w(x_k)\,\mathbf{1}_{Q_\e(x_k)}(x)=\begin{cases} 
		w(x_k), & x \in Q_\e(x_k) \,\,\text{and} \,\, x_k\in \V_\e, \\ 
		0, & x \in \Omega \setminus \Omega_\e
	\end{cases}
\end{equation}
and we let $\Pi_\e \mathbf u: = (\Pi_\e u_i,\Pi_\e u_e,\Pi_\e s)$, for $\mathbf u\in \mathbf{X}^\e$. Recall that for  $w,\widehat w\in X^\e$
\begin{equation*}
	\langle w,\widehat w\rangle_\e
	:=\e^d\sum_{x_k\in\V_\e}w(x_k)\,\widehat w(x_k),
	\qquad
	|w|_\e^2:=\langle w,w\rangle_\e 	=
	\|\Pi_\e w\|_{L^2(\Omega)}^2.
\end{equation*}
If $w\in X_0^\e$, then
\begin{equation}
	\label{eq:zerodecomposition}
	\int_\Omega \Pi_\e w(x)\,dx
	=
	\e^d\sum_{x_k\in\V_\e} w(x_k)
	= 0.
\end{equation}

\begin{remark}
	\label{lemma:meanzero}
	Since $\Omega$ is a bounded Lipschitz domain, $\Omega \setminus \Omega_\e$ is a boundary layer with thickness of order $\e$. In particular, there exists a constant $B>0$ such that $|\Omega \setminus \Omega_\e| \le B\e$. 
\end{remark}

Since our a-priori estimates bound only the $L^2$-norm of the recovery variable $s$, the natural convergence for this component is going to be the weak convergence in $L^2$. Therefore, it is handy to define a tailored convergence for this purpose.
\begin{definition} 
	We say that a sequence of discrete triples $\mathbf{U}^\e = (U_i^\e, U_e^\e, S^\e)\in \mathbf{X}^\e$ converges to $\mathbf{U} = (U_i, U_e, S)\in L^2(\Omega)^3$ in the $s$-$s$-$w$ topology, denoted by $\mathbf{U}^\e \xrightarrow{s-s-w} \mathbf{U}$, if, as $\e \to 0$:
	\begin{align*}
		\Pi_\e U_i^\e &\to U_i \quad \text{strongly in } L^2(\Omega), \\
		\Pi_\e U_e^\e &\to U_e \quad \text{strongly in } L^2(\Omega), \\
		\Pi_\e S^\e &\rightharpoonup S \quad \text{ weakly in } L^2(\Omega).
	\end{align*}
\end{definition}

Since the limit functional $\Psi_n^\tau$ is defined on $L^2(\Omega)$ (see \eqref{MM-eps-tt}) and the approximating functionals $\Psi_n^{\e,\tau}$ (see \eqref{MM-eps}) are defined on discrete functions on $\V_\e$, as a first step we need to define a family of functionals $\widetilde{\Psi}_n^{\e,\tau}$ which coincide with $\Psi_n^{\e,\tau}$ on piecewise constant functions on $\Omega_\e$. Let $\widetilde{\Psi}_n^{\e,\tau}:  L^2(\Omega)^3 \to [0,+\infty]$ be defined as
\begin{equation}
	\label{def:psitilde}
	\widetilde{\Psi}_n^{\e,\tau}(\mathbf{U}):=
	\begin{cases}
		\Psi_n^{\e,\tau}(\mathbf{u}) &\text{if } \mathbf{U}=\Pi_\e \mathbf{u},\ \mathbf{u}\in \mathbf{X}^\e, \\
		+\infty &\text{otherwise}.
	\end{cases}
\end{equation}
The $\Gamma$-convergence of continuum limits of discrete interactions was studied in \cite{AC04} in a much more general setting. In Theorem \ref{th:ac04} and Lemma \ref{lemma:compactness} below we recall the main compactness and convergence results, adapted to the case of nearest-neighbour, scalar, quadratic interactions (precisely, see \cite[Theorem 3.1 and Remark 3.2]{AC04}) in the Dirichlet energy part of $\widetilde{\Psi}_n^{\e,\tau}$. In order to separately study this term, we define the functional $\mathcal{H}_\e : L^2(\Omega) \times L^2(\Omega) \to [0,+\infty]$ by
\begin{equation}
	\label{def:Hepsilon}
	\mathcal{H}_\e(U_i,U_e):=
	\begin{cases}
		a_i^\e(u_i)+a_e^\e(u_e) &\text{if } U_i=\Pi_\e u_i,\ U_e=\Pi_\e u_e, \\
		+\infty &\text{otherwise,}
	\end{cases}
\end{equation}
where
\[
a_{i,e}^\e(u) := \e^d\!\!\!\sum_{\{x_j,x_k\}\in \mathcal{E}_\e} G_{i,e}^\e(x_j,x_k)\left(\frac{u(x_j)-u(x_k)}{\e}\right)^2,\quad \text{for }u\in X^\e. 
\]

\begin{theorem}
	\label{th:ac04}
	For every sequence $(\e_j)$ of positive real numbers converging to 0, there exists a subsequence $(\e_{j_k})$ and two Carath\'eodory functions $g_{i,e}:\Omega \times \R^d \to \R$, satisfying
	\[
	\bar\alpha |y|^2 \leq g_{i,e}(x,y) \leq \bar\alpha^{-1} |y|^2,
	\]
	such that $\mathcal{H}_{\e_{j_k}}$ $\Gamma$-converges with respect to the $L^2(\Omega)$-topology to the functional $\mathcal{H}:L^2(\Omega)\times L^2(\Omega) \to [0,+\infty]$ defined as
	\begin{equation}
		\label{def:H}
		\mathcal{H}(u_i,u_e):=
		\begin{cases}
			\displaystyle \int_\Omega g_i(x,\nabla u_i(x)) +g_e(x,\nabla u_e(x))\ d x &\text{if } (u_i,u_e)\in H^1(\Omega)^2 \\
			+\infty &\text{otherwise.}
		\end{cases}
	\end{equation}
	Moreover, $g_{i,e}(x,\cdot)$ are quadratic forms on $\R^d$: for all $x\in \Omega$ there exist matrices $M_{i,e}(x)\in \R_\text{sym}^{d \times d}$ such that 
	\begin{equation}
		\label{eq:defMie}
		g_{i,e}(x,y)=y^TM_{i,e}(x)y\quad \text{for all }y\in \R^d.
	\end{equation}
\end{theorem}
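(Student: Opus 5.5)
The plan is to obtain Theorem \ref{th:ac04} as a specialization of the general compactness and integral-representation result of \cite[Theorem 3.1 and Remark 3.2]{AC04}. We then add the observation that quadratic (2-homogeneous, parallelogram-law) energies have quadratic $\Gamma$-limits.

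\textbf{Step 1: decoupling.} Since $\mathcal{H}_\e(U_i,U_e)$ is the sum of two functionals, each depending on one variable only, we treat $\mathcal{H}^{i}_\e(U):=a_i^\e(u)$ if $U=\Pi_\e u$ (and $+\infty$ otherwise), and analogously $\mathcal{H}^e_\e$. If each of them $\Gamma$-converges along a common subsequence, then so does the sum, because the variables separate: recovery sequences can be built componentwise, and the liminf inequality simply adds. It therefore suffices to extract a subsequence for $\mathcal{H}^i_\e$, then a further one for $\mathcal{H}^e_\e$.

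\textbf{Step 2: checking the hypotheses of \cite{AC04}.} Write the energy as $\e^d\sum_{x}\sum_{\xi\in\{e_1,\dots,e_d\}} f_\e^\xi\big(x,\tfrac{u(x+\e\xi)-u(x)}{\e}\big)$ with $f^\xi_\e(x,z)=G^\e(x,x+\e\xi)z^2$. By \eqref{eq:coercivity_strong}, these densities satisfy $\alpha z^2\le f_\e^\xi(x,z)\le\alpha^{-1}z^2$. This gives the growth conditions with $p=2$, and the range is nearest-neighbour, so the decay condition on long-range interactions is trivial. The coercivity needed for compactness (the lower bound involving all $d$ coordinate directions) also holds. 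The only extra point concerns the domain: $\V_\e$ is defined via $Q_\e(x)\subset\Omega$, not via $x\in\Omega$. Since $|\Omega\setminus\Omega_\e|\le B\e$ by Remark \ref{lemma:meanzero}, this boundary discrepancy is negligible for $L^2$ limits, and it fits the Lipschitz-domain setting of \cite{AC04}. The theorem there then yields a subsequence and a Carathéodory density $g(x,y)$, convex in $y$, with $c|y|^2\le g\le C|y|^2$. The $\Gamma$-limit is $\int_\Omega g(x,\nabla u)\,dx$ on $H^1(\Omega)$ and $+\infty$ elsewhere. Finiteness outside $H^1$ is excluded by the compactness part: bounded energy implies that the interpolants of the difference quotients are bounded, so limits lie in $H^1$. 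The constants $c,C$ are expressed through $\alpha$; relabeling them as $\bar\alpha$ is harmless.

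\textbf{Step 3: quadratic structure, the main obstacle.} To show that $g(x,\cdot)$ is a quadratic form, we use that each $\mathcal{H}^i_\e$ is a nonnegative quadratic form on $L^2$, extended by $+\infty$. Such forms are stable under $\Gamma$-convergence (Dal Maso \cite{DM93}, Ch.~11–12). Indeed, $\Gamma$-limits of 2-homogeneous functionals are 2-homogeneous, and the parallelogram inequality $\mathcal{H}(u+v)+\mathcal{H}(u-v)\le 2\mathcal{H}(u)+2\mathcal{H}(v)$ passes to the limit by taking recovery sequences $u_\e\pm v_\e$. Applying the same inequality to $(u+v)/2$ and $(u-v)/2$ gives the reverse inequality, so $\mathcal{H}$ is a quadratic form. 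Localizing this is the delicate part. Using the locality of the limit in \cite{AC04}, also on open subsets $A\subset\Omega$, the density $g(x,y)$ is recovered by blow-up: for affine $u_y(x)=y\cdot x$ and a.e.\ $x_0$, $g(x_0,y)=\lim_{r\to0}|B_r|^{-1}\mathcal{H}(u_y;B_r(x_0))$. The parallelogram identity for affine functions then transfers to $g(x_0,\cdot)$ for a.e.\ $x_0$. Together with 2-homogeneity and continuity in $y$, this yields $g(x_0,y)=y^TM(x_0)y$ with $M(x_0)$ symmetric. On the remaining null set, we redefine $g$ (for instance $g(x,y)=\bar\alpha|y|^2$), which does not change the integral and gives the statement for all $x$. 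The bounds on $g$ translate into $\bar\alpha|y|^2\le y^TM_{i,e}y\le\bar\alpha^{-1}|y|^2$, which also gives the measurability and $L^\infty$ bounds of $M_{i,e}$ needed later.
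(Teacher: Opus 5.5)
Your proposal is correct and follows the paper's route. The paper gives no separate argument: it simply invokes \cite[Theorem 3.1 and Remark 3.2]{AC04} for nearest-neighbour scalar quadratic interactions. Your Steps 1--3 (decoupling, checking the growth hypotheses, and obtaining the quadratic structure from the stability of quadratic forms under $\Gamma$-convergence plus localization) make that citation explicit.

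One point is worth tightening, in the domain adaptation. Since $\mathcal{E}_\e$ contains fewer boundary bonds than the lattice of \cite{AC04}, the paper's energy can be smaller than the \cite{AC04} energy. The liminf inequality should therefore come from the localized $\Gamma$-limits on open sets $A\subset\subset\Omega$, followed by letting $A\uparrow\Omega$, and not from the estimate $|\Omega\setminus\Omega_\e|\le B\e$ alone.
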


For $r\in\{1,\dots,d\}$, we define the set of interior nodes with a forward neighbour in the direction of the $r$-th canonical basis vector $\mathbf{e}_r \in \mathbb{R}^d$ as $\V_\e^{\mathrm{int}} :=\{x_k\in \V_\e:\ x_k+\e \mathbf{e}_r\in \V_\e\}$. The corresponding discrete forward gradient is then given by:
\[D_{\e,r}w(x_k):=\frac{w(x_k+\e \mathbf{e}_r)-w(x_k)}{\e}.\]
By \eqref{eq:coercivity_strong} and \eqref{def:Hepsilon}, there exists a constant $M>0$ such that for all $u_i,u_e\in X^\e\times X^\e$
\begin{equation}
	\label{eq:grad-control}
	\sum_{r=1}^d \left(\|\Pi_\e D_{\e,r} u_i\|_{L^2(\Omega)}^2 +\|\Pi_\e D_{\e,r} u_e\|_{L^2(\Omega)}^2 \right)\le M \mathcal{H}_\e(U_i,U_e).
\end{equation}
In the spirit of the discrete Rellich-Kondrachov theorem, these estimates ensure (\cite{AC04}) that the family of reconstructed functions remains relatively compact in $L^2(\Omega)$ as the lattice scale $\e \to 0$.

\begin{lemma}
	\label{lemma:compactness}
	Let $G_\e:\mathcal E_\e \to \R$ satisfy $G_\e(x_j,x_k)\geq \alpha>0$ for all $\{x_j,x_k\}\in \mathcal E_\e$, for all $\e>0$, and let $\{w_\e\} \subset X^\e $ be a sequence such that
	\begin{equation}
		\label{eq:bounded_H1}
		\sup_{\e>0} \left( |w_\e|_\e^2 + \e^d\hspace{-0.5cm}\sum_{\{x_j,x_k\}\in \mathcal E_\e} G_\e(x_j,x_k) \left(\frac{w_\e(x_j)-w_\e(x_k)}{\e}\right)^2\right) < +\infty.
	\end{equation}
	Then there exist a function $w \in H^1(\Omega)$ and a subsequence $\{\e_j\}_{j \in \mathbb{N}} \to 0$ such that $\Pi_{\e_j} w_{\e_j} \to w$ strongly in $L^2(\Omega)$. Moreover, if $w^{\e_j} \in X_0^{\e_j}$, then
	\(
	\int_\Omega w \, dx = 0.
	\)
\end{lemma}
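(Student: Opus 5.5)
We prove Lemma \ref{lemma:compactness} by a discrete Rellich--Kondrachov argument based on a piecewise affine reconstruction, comparing it with the piecewise constant interpolant $\Pi_\e w_\e$.

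\textbf{Step 1: reconstruction.} For each $\e$ we define $\widetilde w_\e$ as the $Q_1$-interpolant of $w_\e$ on the dual cubes with vertices in $\V_\e$: on every cube $x_k+[0,\e]^d$ whose $2^d$ vertices all lie in $\V_\e$, $\widetilde w_\e$ is the multilinear function taking the nodal values there. Let $\Omega_\e'$ be the union of these cubes. Each partial derivative $\partial_r \widetilde w_\e$ on such a cube is a convex combination of the $2^{d-1}$ difference quotients $D_{\e,r}w_\e$ along the parallel edges of the cube. Using $G_\e\ge\alpha$ and \eqref{eq:bounded_H1}, we obtain
\[
\|\widetilde w_\e\|_{L^2(\Omega_\e')}^2+\|\nabla \widetilde w_\e\|_{L^2(\Omega_\e')}^2 \le C\Big(|w_\e|_\e^2+\alpha^{-1}\e^d\!\!\sum_{\mathcal E_\e}G_\e\big(\tfrac{w_\e(x_j)-w_\e(x_k)}{\e}\big)^2\Big)\le C'.
\]
On each cube, $|\widetilde w_\e-\Pi_\e w_\e|\le\sum_r \e|D_{\e,r}w_\e|$ at the relevant vertices. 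Hence
\[
\|\widetilde w_\e-\Pi_\e w_\e\|_{L^2(\Omega_\e')}\le C\e.
\]

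\textbf{Step 2: compactness.} We must deal with the boundary layer $\Omega\setminus\Omega_\e'$, which has measure $O(\e)$ as in Remark \ref{lemma:meanzero}. This is the main obstacle, since there is no uniform extension operator on these varying staircase domains. We first try the following local argument. Fix $K\Subset\Omega$. For $\e$ small, $K\subset\Omega_\e'$, so $\widetilde w_\e$ is bounded in $H^1$ of a Lipschitz neighbourhood of $K$. Rellich's theorem and a diagonal argument over an exhaustion $K_m\uparrow\Omega$ then give a subsequence and a limit $w\in H^1_{loc}(\Omega)$ with $\widetilde w_{\e_j}\to w$ in $L^2_{loc}$. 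By weak lower semicontinuity, $\|w\|_{H^1(\Omega)}\le \liminf\|\widetilde w_\e\|_{H^1(\Omega_\e')}\le C'$, so $w\in H^1(\Omega)$. By Step 1, $\Pi_{\e_j}w_{\e_j}\to w$ in $L^2_{loc}$ as well.

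\textbf{Step 3: global convergence.} To upgrade to strong convergence in $L^2(\Omega)$ we need equi-integrability near $\partial\Omega$, namely $\sup_\e\|\Pi_\e w_\e\|_{L^2(\Omega\setminus K_m)}\to0$ as $m\to\infty$. For this we would use a discrete trace/layer estimate. Since $\Omega$ is Lipschitz, it is locally a graph, and along lattice lines transversal to the boundary a one-dimensional discrete Sobolev inequality gives
\[
\e^d\sum_{x_k\in\V_\e,\ \mathrm{dist}(x_k,\partial\Omega)<\eta}|w_\e(x_k)|^2\le C\eta\,\big(|w_\e|_\e^2+\text{discrete Dirichlet energy}\big).
\]
Alternatively, one can apply the Sobolev embedding $H^1\hookrightarrow L^{p}$ with $p>2$ to $\widetilde w_\e$ extended by reflection in local charts, and conclude by Hölder's inequality on sets of small measure. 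Either route yields the equi-integrability, hence $\Pi_{\e_j}w_{\e_j}\to w$ strongly in $L^2(\Omega)$.

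\textbf{Step 4: mean-zero condition.} If $w_{\e_j}\in X_0^{\e_j}$, then \eqref{eq:zerodecomposition} gives $\int_\Omega\Pi_{\e_j}w_{\e_j}\,dx=0$. Strong $L^2$ convergence on the bounded set $\Omega$ implies convergence of the integrals, so $\int_\Omega w\,dx=0$.
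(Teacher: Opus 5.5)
Your Steps 1, 2 and 4 are sound. The $Q_1$ reconstruction, Rellich on compact subsets and the diagonal argument give a subsequence with $\Pi_{\e_j}w_{\e_j}\to w$ in $L^2_{loc}(\Omega)$ and $w\in H^1(\Omega)$. The gap is Step 3, which carries all the difficulty of the global statement, and neither route you sketch closes it.

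Route (a) assumes that from every node of $\V_\e$ near $\partial\Omega$ you can walk inward along a lattice line while staying in $\V_\e$. At a corner of a Lipschitz domain, the cone of inward directions may contain none of the $\pm\mathbf e_r$. Worse, $\V_\e$ may contain nodes with no neighbours at all, and there the discrete Dirichlet energy gives no information.

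Route (b) needs an extension of $\widetilde w_\e$, but $\widetilde w_\e$ is only defined on the staircase set $\Omega'_\e\subsetneq\Omega$, which has no uniform Lipschitz character, so reflection in charts of $\partial\Omega$ is not available. Moreover, no bound on $\widetilde w_\e$ controls $\Pi_\e w_\e$ on the cells of $\Omega_\e\setminus\Omega'_\e$, and that is exactly where mass can hide.

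In fact the required equi-integrability is false under the stated hypotheses. Let $\Omega\subset\R^2$ be the triangle with vertices $(0,0)$, $(9,11)$, $(11,9)$, that is, $\Omega=\{\tfrac{9}{11}x_1<x_2<\tfrac{11}{9}x_1,\ x_1+x_2<20\}$.
\begin{itemize}
\item For $\e<1$ the node $x^*=(7\e,7\e)$ lies in $\V_\e$, since the corners of $Q_\e(x^*)$ satisfy $7.5<\tfrac{11}{9}\cdot 6.5$ and $6.5>\tfrac{9}{11}\cdot 7.5$.
\item None of its four neighbours lies in $\V_\e$. For example, $Q_\e((8\e,7\e))$ contains $(8.4\e,6.5\e)$, and $6.5<\tfrac{9}{11}\cdot 8.4$; the other three are analogous.
\item So $x^*$ is an isolated vertex of $\mathcal G_\e$ for all small $\e$, which also contradicts the connectivity claim after \eqref{def:graph_e2}.
\item Set $w_\e:=\e^{-1}\mathbf 1_{\{x^*\}}$. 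Then $|w_\e|_\e=1$ and the discrete energy vanishes, while $\Pi_\e w_\e\rightharpoonup 0$ with $\|\Pi_\e w_\e\|_{L^2(\Omega)}=1$. Hence no subsequence converges strongly.
\end{itemize}

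Global strong convergence therefore needs an extra geometric hypothesis. One sufficient condition is that $\partial\Omega$ can be covered by finitely many patches, in each of which some fixed $\pm\mathbf e_r$ lies uniformly inside the cone of inward directions. This holds for $C^1$ domains and fails at the vertex above. Under it, $Q_\e(x)\subset\Omega$ implies $Q_\e(x\pm\e\mathbf e_r)\subset\Omega$ near the boundary, so inward lattice rays stay in $\V_\e$. Your route (a), with the usual averaging over the ray length, then gives the layer estimate.

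Without such a hypothesis, your argument proves only $L^2_{loc}$ convergence, hence weak convergence in $L^2(\Omega)$ by the uniform bound. Step 4 survives this weakening: $\int_\Omega\Pi_\e w_\e\,dx\to\int_\Omega w\,dx$ follows by testing the weak convergence against $\mathbf 1_\Omega$. The paper gives no argument of its own here beyond the reference to \cite{AC04}, so it does not resolve this boundary issue either.
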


We now consider the minimizers $\mathbf{U}^\e$ of  $\Psi_n^{\e,\tau}$, for a fixed iteration $n \in \{1, \dots, N\}$, time step $\tau =T/N> 0$ and $(n-1)$-step solution $\mathbf{U}_{n-1}^{\e,\tau}$. (For sake of notation, we drop the indexes $n$ and $\tau$ from $\mathbf{U}^\e$.) 
The stability of the Euler scheme \eqref{stability-estimate-knm} ensures that the boundedness of the energy is preserved for all $n \le N$, if the initial data energy is bounded. Consequently, we assume that the sequence of initial data $\{\mathbf{U}_0^{\e}\}_{\e > 0}$ is well-prepared:
\begin{equation}
	\label{AS}
	\sup_{\e>0} \left( \frac{1}{2} \mathbf{a}^\e\left(\mathbf{U}_0^{\e}\right) + \phi^\e\left(\mathbf{U}_0^{\e}\right) \right) < \infty.
\end{equation}

\begin{proposition}
	\label{proposition:compactness}
	Let $\tau > 0$ and $n \in \{0, \dots, N\}$ be fixed, $(\varepsilon_j)_{j \in \mathbb{N}}$ be a sequence of positive real numbers converging to $0$, and $\mathbf{U}^{\varepsilon_j}=(U_{i}^{\varepsilon_j}, U_{e}^{\varepsilon_j}, S^{\varepsilon_j}) \in \mathbf{X}^{\varepsilon_j}$ be the minimizer of $\Psi_n^{\varepsilon_j,\tau}$. Under assumption \eqref{AS}, there exists a limit function $\mathbf{U}=(U_{i}, U_{e}, S) \in H^1(\Omega) \times H^1(\Omega) \times L^2(\Omega)$ such that, up to a subsequence, not relabeled, 
	\[
	\Pi_{\varepsilon_j} \mathbf{U}^{\varepsilon_j} \xrightarrow{s-s-w} \mathbf U \quad \text{as } j \to \infty.
	\]
	Furthermore, since $U_{e}^{\varepsilon_j} \in X_0^{\varepsilon_j}$ for all $j$, the limit extracellular potential satisfies the zero-mean constraint $\displaystyle\int_\Omega U_{e} \, dx = 0$.
\end{proposition}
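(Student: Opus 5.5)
The plan is to obtain an $\e$-uniform bound on $|U_i^{\e_j}|_{\e_j}^2+|U_e^{\e_j}|_{\e_j}^2+|S^{\e_j}|_{\e_j}^2$ together with $\mathcal{H}_{\e_j}(\Pi_{\e_j}U_i^{\e_j},\Pi_{\e_j}U_e^{\e_j})$. Once this is available, Lemma \ref{lemma:compactness} gives strong convergence of the potentials and the zero-mean property of the limit extracellular potential. For the recovery variable, weak compactness of bounded sets in $L^2(\Omega)$ gives weak convergence.

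\textbf{Step 1 (energy bound).} For fixed $n$, the minimizer $\mathbf{U}^{\e_j}$ coincides with $\mathbf{U}_n^{\e_j,\tau}$ produced by the scheme started at $\mathbf{U}_0^{\e_j}$. By the stability estimate \eqref{stability-estimate-knm},
\[
E^{\e_j}(\mathbf{U}^{\e_j}) \le \max_{0\le m\le N} E^{\e_j}(\mathbf{U}_m^{\e_j,\tau}) \le C\, E^{\e_j}(\mathbf{U}_0^{\e_j}),
\]
and $C$ does not depend on $\e$ or $\tau$. Assumption \eqref{AS} then yields $\sup_j E^{\e_j}(\mathbf{U}^{\e_j})<\infty$. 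One point needs care here: $E^\e$ as written in \eqref{eq:energy} contains only $\frac12\mathbf{a}^\e+\phi^\e$, and $\mathbf{a}^\e$ carries the term $\gamma|s|_\e^2$, which gives no control if $\gamma=0$. The PSCF05 estimates are really stated for $\mathbf{b}^\e+E^\e$. So I would apply \eqref{stability-estimate-knm} in the form in which $\mathbf{b}^\e(\mathbf{U}_n)$ is also bounded, namely $\max_n(\mathbf{b}^\e+\mathbf{a}^\e+\phi^\e)(\mathbf{U}_n)\le C(\mathbf{b}^\e+\mathbf{a}^\e+\phi^\e)(\mathbf{U}_0)$. This is the discrete Gr\"onwall version of \eqref{apriori-estimate}. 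The initial $\mathbf{b}^\e$ term, which includes $|S_0^\e|^2_\e$, is bounded because the initial data converge in $L^2$.

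\textbf{Step 2 (individual norms).} Lemma \ref{lemma:coercive}, applied with $\nu$ independent of $\e$, gives
\[
\|\mathbf{U}^{\e_j}\|_{\e_j}^2 \le \nu^{-1}\bigl(\mathbf{a}^{\e_j}+\mathbf{b}^{\e_j}\bigr)(\mathbf{U}^{\e_j}) \le C.
\]
Next, $\mathbf{a}^\e(\mathbf{U})\ge\langle\mathcal L_{G_i^\e}U_i,U_i\rangle_\e+\langle\mathcal L_{G_e^\e}U_e,U_e\rangle_\e$. By Remark \ref{rem:laplacian}(i), this is a rescaled form of the Dirichlet sums appearing in \eqref{eq:bounded_H1}. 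Here I must check that the scaling matches: $\mathcal L_{G^\e}$ acting with the $\e^d$-weighted inner product should reproduce $\e^d\sum G^\e((u_j-u_k)/\e)^2$. This is the normalization implicit in \eqref{def:ae}, where the $\e^{-2}$ is absorbed into the weights, so I take $a^\e_{i}(U_i)+a^\e_e(U_e)\le \mathbf{a}^\e(\mathbf{U})$. Hence \eqref{eq:bounded_H1} holds for both $U_i^{\e_j}$ and $U_e^{\e_j}$, with $G_\e=G_{i,e}^\e\ge\alpha$ by \eqref{eq:coercivity_strong}.

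\textbf{Step 3 (extraction).} I apply Lemma \ref{lemma:compactness} to $U_i^{\e_j}$ and get $U_i\in H^1(\Omega)$ together with a subsequence along which the potentials converge strongly. I then apply it again to $U_e^{\e_j}$ along that subsequence, obtaining $U_e\in H^1(\Omega)$ with $\int_\Omega U_e=0$. The zero mean also follows directly: by \eqref{eq:zerodecomposition} each $\Pi_\e U_e^\e$ has zero mean, and strong $L^2$ convergence on the bounded set $\Omega$ preserves integrals. Finally, $\|\Pi_{\e_j}S^{\e_j}\|_{L^2}=|S^{\e_j}|_{\e_j}\le C$, so a further subsequence converges weakly to some $S\in L^2(\Omega)$. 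Together these give $\Pi_{\e_j}\mathbf{U}^{\e_j}\xrightarrow{s-s-w}\mathbf U$.

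The main obstacle is Step 1. It requires making sure the uniform bound covers $\mathbf{b}^\e$, and in particular $|S|^2$, and not just $E^\e$. It also requires that the constant is genuinely independent of $\e$, which rests on Lemma \ref{lemma:g} and Lemma \ref{lemma:coercive} having $\e$-independent constants. I would handle this by redoing the discrete Gr\"onwall argument of \cite[Sect.~5]{PSCF05}. The approach is to test the Euler-Lagrange equation with $\mathbf{U}_m-\mathbf{U}_{m-1}$, use convexity of $\phi^\e$ and Lemma \ref{lemma:g}, and sum over $m\le n$.
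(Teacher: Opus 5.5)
Your proposal follows the same route as the paper: the stability estimate \eqref{stability-estimate-knm} gives an $\e$-uniform energy bound, Lemma \ref{lemma:coercive} turns it into bounds on $\|\mathbf U^{\e_j}\|_{\e_j}$ and on the discrete Dirichlet sums, Lemma \ref{lemma:compactness} handles $U_i,U_e$ (including the zero mean), and weak $L^2$ compactness handles $S$. Your extra care in Step 1 is justified, because the paper's chain $3E^\e\ge\mathbf a^\e+\mathbf b^\e$ rests on $E^\e\ge\mathbf b^\e$, which fails when $\gamma=0$ (for $\gamma=\sigma=\delta=0$ the $S$-component of every minimizer is just $S_0^{\e}$, which \eqref{AS} does not control), so the bound on $|S_0^{\e}|_{\e}$ that you take from the $L^2$ convergence of the initial data (or, alternatively, assuming $\gamma>0$) is genuinely needed.
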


\begin{proof} 
	The stability estimates for the Euler method \eqref{stability-estimate-knm} provide a constant $C>0$, independent of the time-step $\tau>0$, such that, for all iterations $n$, $E^\e(\mathbf{U}^{\e}) \le C E^\e(\mathbf{U}_0^{\e})$. Since by \eqref{eq:ab_coercive} and \eqref{eq:energy}, for $\mathbf{U}^{\e} \in \mathbf{X}^\e$
	\[
	3E^\e(\mathbf{U}^{\e}) \geq \mathbf{a}^\e(\mathbf{U}^{\e})+ \dfrac{1}{2} \mathbf{a}^\e(\mathbf{U}^{\e}) 
	+ \phi^\e(\mathbf{U}^{\e}) \geq \mathbf{a}^\e(\mathbf{U}^{\e})+\mathbf{b}^\e(\mathbf{U}^{\e})\geq \nu \|\mathbf{U}^{\e}\|^2_{\e}, 
	\]
	%
	we can apply Lemma \ref{lemma:compactness} to $\{U_{i}^{\e}\}$ and $\{U_{e}^{\e}\}$ and find a subsequence $\e_j \to 0$ and functions $U_{i} ,U_{e} \in H^1(\Omega)$ such that
	\begin{equation*}
		\Pi_{\e_j} U_{i,e}^{\e_j} \to U_{i,e} \quad \text{strongly in } L^2(\Omega).
	\end{equation*}
	Regarding the recovery variable, estimate \eqref{eq:ab_coercive} directly implies that the sequence of interpolants $\{\Pi_\e S^{\e}\}_\e$ is uniformly bounded in $L^2(\Omega)$. We may then extract a further subsequence (still denoted by $\e_j$) and a limit function $S \in L^2(\Omega)$ such that
	\begin{equation*}
		\Pi_{\e_j} S^{\e_j} \rightharpoonup S \quad \text{weakly in } L^2(\Omega).
	\end{equation*}
	Finally, since $U_{e}^{\e} \in X_0^\e$, in the limit we obtain the zero-mean constraint for $U_e$.
\end{proof}

\subsection{$\Gamma$-convergence of the incremental functionals}
The subsequent analysis is devoted to establishing the $\Gamma$-convergence of the family of discrete functionals $\widetilde\Psi_n^{\e,\tau}$ (defined in \eqref{def:psitilde}) towards the continuous functional $\Psi_n^\tau$ (defined in \eqref{MM-eps-tt}), with respect to the mixed $s$-$s$-$w$ topology. To this end, we first report two auxiliary results concerning the asymptotic behaviour of the quadratic forms and the nonlinear potential terms. 

\begin{lemma}
	\label{lemma:Gamma_norm}
	The family of functionals
	\[
	F^\e: L^2(\Omega) \to [0,+\infty],\qquad F^\e(W):=	\begin{cases}
		\|\Pi_\e w\|^2_{L^2(\Omega)}, & \text{if } W = \Pi_\e w,\ w \in X^\e \\
		+\infty, & \text{otherwise},
	\end{cases}
	\]
	$\Gamma$-converges in the strong and in the weak $L^2(\Omega)$ topologies to $F: L^2(\Omega) \to \R$, $F(W):=\|W\|^2_{L^2(\Omega)}$.
\end{lemma}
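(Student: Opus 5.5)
We prove the two $\Gamma$-convergence inequalities separately; since weak convergence is implied by strong convergence, the liminf inequality is needed only in the weak topology and the recovery sequence only in the strong topology. The liminf in the weak topology then covers the strong one, and a strongly converging recovery sequence is also weakly converging.

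\emph{Liminf inequality.} Let $W_\e \rightharpoonup W$ weakly in $L^2(\Omega)$. If $\liminf_\e F^\e(W_\e)=+\infty$ there is nothing to prove. Otherwise, pass to a subsequence realizing the liminf along which $W_\e=\Pi_\e w_\e$ for some $w_\e\in X^\e$. On this subsequence $F^\e(W_\e)=\|W_\e\|^2_{L^2(\Omega)}$. The weak lower semicontinuity of the $L^2$-norm then gives
\[
\|W\|^2_{L^2(\Omega)}\le\liminf_\e\|W_\e\|^2_{L^2(\Omega)}=\liminf_\e F^\e(W_\e).
\]

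\emph{Recovery sequence.} Given $W\in L^2(\Omega)$, take the cell averages
\[
w_\e(x_k):=\e^{-d}\int_{Q_\e(x_k)}W\,dx, \qquad x_k\in\V_\e,
\]
and set $W_\e:=\Pi_\e w_\e$. Then $W_\e=P_\e(W\mathbf 1_{\Omega_\e})$, where $P_\e$ is the $L^2$-orthogonal projection onto functions that are piecewise constant on the cubes $Q_\e(x_k)$ and vanish on $\Omega\setminus\Omega_\e$. We show $W_\e\to W$ strongly in $L^2(\Omega)$ in two steps.

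\begin{enumerate}
\item[(a)] For $\psi\in C_c(\Omega)$: once $\e$ is small enough that $\mathrm{supp}\,\psi\subset\Omega_\e$ (this follows from Remark~\ref{lemma:meanzero}, or simply because every point at distance greater than $\sqrt d\,\e$ from $\partial\Omega$ lies in $\Omega_\e$), uniform continuity gives $\|P_\e\psi-\psi\|_{L^2}\le |\Omega|^{1/2}\,\omega_\psi(\sqrt d\,\e)\to0$, where $\omega_\psi$ is the modulus of continuity of $\psi$.
\item[(b)] By density of $C_c(\Omega)$ in $L^2(\Omega)$, since $P_\e$ has norm at most $1$ and $\|W\mathbf 1_{\Omega\setminus\Omega_\e}\|_{L^2}\to0$ (because $|\Omega\setminus\Omega_\e|\to0$ and the integral is absolutely continuous), a $3\eta$-argument gives
\[
\|W_\e-W\|_{L^2}\le \|P_\e(W-\psi)\|_{L^2}+\|P_\e\psi-\psi\|_{L^2}+\|\psi-W\|_{L^2}+\|W\mathbf 1_{\Omega\setminus\Omega_\e}\|_{L^2}.
\]
Choosing $\psi$ with $\|W-\psi\|_{L^2}<\eta$, the right-hand side has limsup at most $2\eta$, so $W_\e\to W$ strongly.
\end{enumerate}

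Consequently $F^\e(W_\e)=\|W_\e\|^2_{L^2(\Omega)}\to\|W\|^2_{L^2(\Omega)}=F(W)$, so $\lim_\e F^\e(W_\e)=F(W)$.

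The only point needing some care is step (b), the strong convergence of the piecewise-constant projections. It relies on the norm-one property of $P_\e$ together with the smallness of the boundary layer $\Omega\setminus\Omega_\e$; the rest of the argument is elementary.
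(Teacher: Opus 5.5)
Your proof is correct and follows the same route as the paper. The liminf comes from weak lower semicontinuity of the norm, the recovery sequence is built from cell averages, and you note that a weak liminf together with a strongly converging recovery sequence covers both topologies; beyond that you just spell out the ``standard approximation by cell averages'' (density of $C_c(\Omega)$ plus $\|P_\e\|\le 1$) that the paper leaves implicit, with one slip: the parenthetical appeal to Remark~\ref{lemma:meanzero} cannot give $\mathrm{supp}\,\psi\subset\Omega_\e$, since a bound on $|\Omega\setminus\Omega_\e|$ implies no containment, but your distance argument ($\mathrm{dist}(y,\partial\Omega)>\sqrt d\,\e \Rightarrow y\in\Omega_\e$) settles it.
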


\begin{proof} 
	Given a sequence $w_\e \in X^\e$ such that $\Pi_\e w_\e \rightharpoonup w$, weakly in $L^2(\Omega)$, the liminf inequality is a consequence of the weak lower semicontinuity of the $L^2$-norm. 
	In order to establish the limsup inequality, for any $W \in L^2(\Omega)$, we define the recovery sequence $\{w_\e\}$ by setting the nodal values $w_\e(x_k)$ via cell averages over the cubes $Q_\e(x_k)$ centered at the lattice nodes $x_k \in \V_\e$:
	\begin{equation*}
		w_\e(x_k) := \frac{1}{\e^d} \int_{Q_\e(x_k)} W(x) \, dx,\qquad W_\e:=\Pi_\e w_\e.
	\end{equation*}
	Since $|\Omega\setminus \Omega_\e|$ vanishes as $\e\to0$ (see Remark \ref{lemma:meanzero}), by the standard approximation of $L^2$ functions by cell averages, the piecewise constant interpolation satisfies $\Pi_\e w_\e \to W$ strongly in $L^2(\Omega)$ and therefore $F^\e(W_\e)=\|\Pi_\e w_\e\|_{L^2(\Omega}^2 \to \|W\|_{L^2(\Omega}^2$. The liminf inequality w.r.t. the strong convergence and the limsup inequality w.r.t. to the weak one are a consequence of the former two.
\end{proof}

\begin{lemma}
	\label{lemma:gamma_g}
	Let $\mathbf U^\e = (U_i^\e, U_e^\e, S^\e)$ and $\mathbf W^\e = (W_i^\e, W_e^\e, R^\e)$ belong to $\mathbf{X}^\e$. Assume that $\mathbf U^\e \xrightarrow{s-s-w} \mathbf u$ and $\mathbf W^\e \xrightarrow{s-s-w} \mathbf w$. 
	Then,
	\begin{equation*}
		\lim_{\e\to0} \mathbf{g}^\e(\mathbf U^\e, \mathbf W^\e) = \mathbf{g}(\mathbf u, \mathbf w).
	\end{equation*}
\end{lemma}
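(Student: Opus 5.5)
The plan is to write each of the three terms of $\mathbf{g}^\e$ as an $L^2(\Omega)$ integral of piecewise constant functions. Then each term becomes a pairing in which at least one factor converges strongly, and the limit follows from the standard strong–weak continuity of the $L^2$ inner product.

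\emph{Step 1: reduction to $L^2$ pairings.} Since $\Pi_\e$ is linear and the cubes $Q_\e(x_k)$ are disjoint with volume $\e^d$, for all $w,\widehat w\in X^\e$ we have $\Pi_\e(w\widehat w)=\Pi_\e w\,\Pi_\e\widehat w$. Consequently
\[
\langle w,\widehat w\rangle_\e=\int_\Omega \Pi_\e w\,\Pi_\e \widehat w\,dx .
\]
This is the polarized form of the identity $|w|_\e^2=\|\Pi_\e w\|^2_{L^2(\Omega)}$ recalled above; the contribution of $\Omega\setminus\Omega_\e$ vanishes because $\Pi_\e w=0$ there. We set $V^\e:=U_i^\e-U_e^\e$, $\widehat V^\e:=W_i^\e-W_e^\e$, $v:=u_i-u_e$ and $\widehat v:=w_i-w_e$. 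Then
\[
\mathbf{g}^\e(\mathbf U^\e,\mathbf W^\e)=\lambda_F\!\int_\Omega \Pi_\e V^\e\,\Pi_\e\widehat V^\e\,dx-\delta\!\int_\Omega \Pi_\e S^\e\,\Pi_\e \widehat V^\e\,dx+\sigma\!\int_\Omega \Pi_\e V^\e\,\Pi_\e R^\e\,dx .
\]

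\emph{Step 2: convergence of the factors.} By linearity of $\Pi_\e$ and the definition of $s$-$s$-$w$ convergence, $\Pi_\e V^\e\to v$ and $\Pi_\e\widehat V^\e\to\widehat v$ strongly in $L^2(\Omega)$. The recovery components satisfy only $\Pi_\e S^\e\rightharpoonup s$ and $\Pi_\e R^\e\rightharpoonup r$ weakly in $L^2(\Omega)$.

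\emph{Step 3: passage to the limit.} The first integral is a product of two strongly convergent sequences, so it converges to $\int_\Omega v\widehat v\,dx$. Each of the other two integrals pairs a strongly convergent factor with a weakly convergent one. For such pairs we use
\[
\int f_\e g_\e-\int fg=\int (f_\e-f)g_\e+\int f(g_\e-g),
\]
where $f_\e\to f$ strongly and $g_\e\rightharpoonup g$ weakly. The first term is bounded by $\|f_\e-f\|_{L^2}\sup_\e\|g_\e\|_{L^2}$, and $\sup_\e\|g_\e\|_{L^2}<\infty$ by the uniform boundedness principle. The second term tends to zero by the definition of weak convergence. Hence the second and third integrals converge to $\int_\Omega s\,\widehat v\,dx$ and $\int_\Omega v\,r\,dx$, respectively. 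Summing the three limits gives $\mathbf g(\mathbf u,\mathbf w)$.

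The only point that requires care is the structural observation that $\mathbf g^\e$ contains no product of two weakly converging recovery variables, i.e.\ no term $\langle S,R\rangle_\e$. Such a term would not pass to the limit, and its absence is exactly what makes the $s$-$s$-$w$ topology sufficient here.
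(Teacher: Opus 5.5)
Your proof is correct and follows the paper's argument. The paper likewise rewrites $\mathbf g^\e(\mathbf U^\e,\mathbf W^\e)$ as $\mathbf g(\Pi_\e\mathbf U^\e,\Pi_\e\mathbf W^\e)$ and passes to the limit, noting that the $\lambda_F$-term pairs two strongly convergent factors while the $\delta$- and $\sigma$-terms each pair a strongly convergent factor with a weakly convergent one; your version spells out the strong--weak splitting, and your sign $-\delta$ matches the definition of $\mathbf g^\e$, whereas the paper's displayed computation has a harmless typo ($+\delta$).
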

\begin{proof} 
	By definition of $\mathbf{g}^\e$ and $\mathbf g$,
	\begin{align*}
		\mathbf{g}^\e(\mathbf U^\e,\mathbf W^\e)
		&=
		\lambda_F\langle U_i^\e-U_e^\e,\,
		W_i^\e-W_e^\e\rangle_\e +\delta\langle S^\e,\,
		W_i^\e-W_e^\e\rangle_\e
		+\sigma\langle U_i^\e-U_e^\e,\,
		R^\e\rangle_\e \\
		&=\lambda_F\int_\Omega
		\Pi_\e(U_i^\e-U_e^\e)\,
		\Pi_\e(W_i^\e-W_e^\e)\,dx\\
		&+\delta\int_\Omega
		\Pi_\e (S^\e)\,
		\Pi_\e(W_i^\e-W_e^\e)\,dx  +\sigma\int_\Omega
		\Pi_\e(U_i^\e-U_e^\e)\,
		\Pi_\e (R^\e)\,dx \\
		& = \mathbf{g}\left(\Pi_\e\mathbf U^\e,\Pi_\e\mathbf W^\e \right).
	\end{align*}
	Since the first integrand is the product of two strongly converging sequences and the other two are products of a strongly converging and a weakly converging sequence, we can pass to the limit as $\e \to 0$ in the integrals and obtain the thesis. 
\end{proof}

\begin{theorem} 
	\label{theorem:gamma}
	Let the time step $\tau > 0$ and the time level $n \in \{1, \dots, N\}$ be fixed. Let $\e_j \to 0$ be a sequence such that $\mathcal H_{\e_j}$ $\Gamma$-converges to $\mathcal H$ as in Theorem \ref{th:ac04} and assume that the sequence of vectors $(\mathbf{U}_{n-1}^{\e_j,\tau}) \subset \mathbf{X}^{\e_j}$ converges to $\mathbf{U}_{n-1} \in L^2(\Omega)^3$ in the $s$-$s$-$w$ topology. If
	\begin{equation}
		\label{eq:ASn-1}
		\sup_{\e_j>0} \left(  \mathbf{a}^{\e_j}(\mathbf{U}_{n-1}^{\e_j,\tau}) + \phi^{\e_j}(\mathbf{U}_{n-1}^{\e_j,\tau}) \right) < \infty,
	\end{equation}
	then the following properties hold:
	\begin{enumerate}
		\item[(I)] Equi-coercivity and Compactness: Any sequence of discrete states $(\mathbf{U}^{\e_j}) \subset \mathbf{X}^{\e_j}$ with uniformly bounded energy $\sup_{\e_j > 0} \Psi_n^{\e_j, \tau}(\mathbf{U}^{\e_j}) < \infty$ is relatively compact with respect to the $s$-$s$-$w$ topology. Furthermore, any limit $\mathbf{U} = (U_i, U_e, S)$ belongs to $\mathbf{X}= H^1(\Omega) \times H_\circ^1(\Omega) \times L^2(\Omega)$. 
		\item[(II)] $\Gamma$-convergence: The sequence of functionals $\widetilde\Psi_n^{\e_j, \tau}$ $\Gamma$-converges to the functional $\Psi_n^\tau$ with respect to the $s$-$s$-$w$ topology. Specifically, the following holds:
		\begin{enumerate}
			\item liminf inequality: For every sequence $(\mathbf{U}^{\e_j})$ such that $\mathbf{U}^{\e_j} \xrightarrow{s-s-w} \mathbf{U}$,
			\begin{equation*}
				\liminf_{j \to +\infty} \widetilde\Psi_n^{\e_j, \tau}(\mathbf{U}^{\e_j}) \ge \Psi_n^\tau(\mathbf{U}).
			\end{equation*}
			\item limsup inequality: For every $\mathbf{U} \in \mathbf{X}$, there exists a recovery sequence $(\widetilde{\mathbf{U}}_n^{\e_j,\tau})$ converging $s$-$s$-$w$ to $\mathbf{U}$ such that
			\begin{equation*}
				\limsup_{j \to +\infty} \widetilde\Psi_n^{\e_j, \tau}(\widetilde{\mathbf{U}}_n^{\e_j,\tau}) \le \Psi_n^\tau(\mathbf{U}).
			\end{equation*}
		\end{enumerate}
	\end{enumerate}
\end{theorem}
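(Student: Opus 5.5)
Throughout, I split $\Psi_n^{\e,\tau}$ into three groups and treat each separately: the quadratic dissipation term $\frac{1}{2\tau}\mathbf{b}^\e(\mathbf{U}-\mathbf{U}_{n-1}^{\e,\tau})$ together with $\frac{\gamma}{2}|S|_\e^2$, the Dirichlet part $\frac12(a_i^\e+a_e^\e)=\frac12\mathcal{H}_\e$, and the lower-order terms $\phi^\e$ and $\mathbf{g}^\e(\mathbf{U}_{n-1}^{\e,\tau},\cdot)$. The tools are Theorem \ref{th:ac04} for the Dirichlet part, Lemma \ref{lemma:Gamma_norm} for the quadratic $L^2$ terms, Lemma \ref{lemma:gamma_g} for the coupling, and convexity of $\varphi$ for the potential.

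\emph{(I) Equi-coercivity.} Lemma \ref{lemma:g} with Young's inequality gives
\[
|\mathbf{g}^\e(\mathbf{U}_{n-1},\mathbf{U})|\le \tfrac{1}{4\tau}\mathbf{b}^\e(\mathbf{U})+\tau G^2\mathbf{b}^\e(\mathbf{U}_{n-1}).
\]
Since $\mathbf{b}^\e(\mathbf{U})\le 2\mathbf{b}^\e(\mathbf{U}-\mathbf{U}_{n-1})+2\mathbf{b}^\e(\mathbf{U}_{n-1})$, a bound on $\Psi_n^{\e_j,\tau}(\mathbf{U}^{\e_j})$ controls $\mathbf{a}^{\e_j}(\mathbf{U}^{\e_j})+\mathbf{b}^{\e_j}(\mathbf{U}^{\e_j})$. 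This uses that $\mathbf{b}^{\e_j}(\mathbf{U}_{n-1}^{\e_j,\tau})$ is bounded, which follows from \eqref{eq:ASn-1} and \eqref{eq:energy}. By Lemma \ref{lemma:coercive} we also get a bound on $\|\mathbf{U}^{\e_j}\|_{\e_j}$. Arguing exactly as in Proposition \ref{proposition:compactness} via Lemma \ref{lemma:compactness}, we obtain strong $L^2$ compactness of the potentials with $H^1$ limits and zero mean for $U_e$, and weak compactness for $S$.

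\emph{(II)(a) Liminf.} I may assume the $\liminf$ is finite and achieved along a subsequence, so (I) gives $U_i,U_e\in H^1$.
\begin{itemize}
\item Dirichlet part: the $\Gamma$-liminf of $\mathcal{H}_{\e_j}$ gives $\liminf\frac12\mathcal{H}_{\e_j}\ge\frac12\int_\Omega\nabla U_i\cdot M_i\nabla U_i+\nabla U_e\cdot M_e\nabla U_e$, by \eqref{eq:defMie}.
\item $\mathbf{b}$-term: the $v$-component converges strongly, and the $s$-component $\Pi(S^{\e_j}-S_{n-1}^{\e_j})$ converges weakly. Lemma \ref{lemma:Gamma_norm} (weak lsc of the norm) then handles $\frac{1}{2\tau}\mathbf{b}^\e(\mathbf{U}-\mathbf{U}_{n-1})$ and $\frac{\gamma}{2}|S|^2$.
\item $\phi^\e$: write $\phi^{\e}(\mathbf{U}^\e)=\int_{\Omega_\e}\varphi(\Pi_\e V^\e)$. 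By \eqref{eq:phi} we have $\varphi\ge0$, and $\Pi_\e V^\e\to V$ strongly, hence a.e. along a subsequence. Fatou's lemma together with $|\Omega\setminus\Omega_\e|\to0$ gives $\liminf\phi^\e\ge\phi(\mathbf{U})$.
\item $\mathbf{g}$: $-\mathbf{g}^\e$ converges by Lemma \ref{lemma:gamma_g}.
\end{itemize}
Summing these lower bounds gives the liminf inequality.

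\emph{(II)(b) Limsup.} This is the main obstacle. Theorem \ref{th:ac04} provides recovery sequences $u_{i}^j,u_e^j$ for $\mathcal{H}$ at $(U_i,U_e)$, but these are only known to converge in $L^2$. The term $\phi^\e$ is not continuous under mere $L^2$ convergence when $F$ grows fast; for instance, with cubic FitzHugh–Nagumo $\varphi$ has quartic growth.

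My first attempt is a density reduction. Assume first $U_i,U_e\in C^\infty(\overline\Omega)$ (or Lipschitz). I would try to show that the recovery sequence can be modified so that it stays uniformly bounded. Truncating at levels $\pm\|U\|_\infty+1$ does not increase the nearest-neighbour energy, since truncation is 1-Lipschitz, and it preserves $L^2$ convergence. After truncation the potentials are bounded, so $\varphi$ is Lipschitz on the relevant range and $\phi^\e\to\phi$ by dominated convergence. To restore zero mean for $U_e$, I subtract the discrete mean, which tends to $0$ by \eqref{eq:zerodecomposition} and Remark \ref{lemma:meanzero}; this leaves $a_e^\e$ unchanged. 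For $S$, I take cell averages as in Lemma \ref{lemma:Gamma_norm}, which gives strong convergence and hence convergence of all $s$-terms. The $\mathbf{b}$-term then converges because the $v$-component converges strongly and the $s$-component strongly minus weakly, and the cross term converges. For general $\mathbf{U}\in\mathbf{X}$ with $\phi(\mathbf{U})<\infty$, I conclude with a diagonal argument. Approximate $\mathbf{U}$ by smooth functions so that $\Psi_n^\tau$ converges, using continuity of $\mathbf{a},\mathbf{b},\mathbf{g}$ and a truncation or mollification argument for $\phi$ exploiting convexity. Then use the lower semicontinuity of the $\Gamma$-limsup. If $\phi(\mathbf{U})=+\infty$ there is nothing to prove.
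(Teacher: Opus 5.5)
\textbf{Gap in the limsup for the dissipation term.} You assert that $\frac{1}{2\tau}\mathbf{b}^\e(\widetilde{\mathbf U}^\e-\mathbf U_{n-1}^{\e,\tau})$ converges because its $s$-component is ``strongly minus weakly'' convergent and the cross term converges. Expanding,
\[
|\widetilde S^\e-S^\e_{n-1}|_\e^2=|\widetilde S^\e|_\e^2-2\langle \widetilde S^\e,S^\e_{n-1}\rangle_\e+|S^\e_{n-1}|_\e^2 .
\]
The first two terms converge. Under mere weak convergence $\Pi_\e S^\e_{n-1}\rightharpoonup S_{n-1}$, however, the last term only satisfies $\liminf|S^\e_{n-1}|_\e^2\ge\|S_{n-1}\|^2_{L^2(\Omega)}$. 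So your recovery sequence exceeds $\Psi_n^\tau(\mathbf U)$ by $\frac{1}{2\tau}\bigl(\limsup|S^\e_{n-1}|^2_\e-\|S_{n-1}\|^2_{L^2(\Omega)}\bigr)$.

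When $\gamma>0$, no cleverer choice of $\widetilde S^\e$ repairs this. Take $\mathbf U^\e_{n-1}=(0,0,w_\e)$ with $w_\e(\e k)=(-1)^{k_1}$. Then \eqref{eq:ASn-1} holds, $\Pi_\e w_\e\rightharpoonup 0$, and $|w_\e|_\e^2\to|\Omega|$. Take $\mathbf U=0$, so $\Psi_n^\tau(0)=0$. For every $\mathbf U^\e\xrightarrow{s-s-w}0$, setting $V^\e=U_i^\e-U_e^\e$, one has $-\mathbf g^\e(\mathbf U^\e_{n-1},\mathbf U^\e)=\delta\langle w_\e,V^\e\rangle_\e\to0$. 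Since all remaining terms are nonnegative,
\[
\Psi_n^{\e,\tau}(\mathbf U^\e)\ge\min_{z\in X^\e}\Bigl(\tfrac{1}{2\tau}|z-w_\e|_\e^2+\tfrac{\gamma}{2}|z|_\e^2\Bigr)+o(1)=\tfrac{\gamma}{2(1+\gamma\tau)}|w_\e|_\e^2+o(1),
\]
and (II)(b) fails at $\mathbf U=0$.

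The paper's proof makes exactly the same claim. Lemmas \ref{lemma:Gamma_norm} and \ref{lemma:gamma_g} do not cover a quadratic term centred at a weakly converging sequence. So this is a defect in the hypothesis, not something you missed relative to the paper. The fix is to assume $\Pi_{\e_j}\mathbf U_{n-1}^{\e_j,\tau}\to\mathbf U_{n-1}$ strongly in all three components. This is what the induction in the proof of Theorem \ref{th:main} actually uses, and it is available there: the $s$-component of \eqref{P-eps-tau} is explicit, $S_n=(S_{n-1}+\tau\sigma V_{n-1})/(1+\gamma\tau)$. Under that hypothesis your recovery sequence works as written. If $\gamma=0$, you could instead keep the weak hypothesis and take $\widetilde S^\e=P_\e S+(S^\e_{n-1}-P_\e S_{n-1})$, where $P_\e$ is the cell-average map; then $\widetilde S^\e-S^\e_{n-1}=P_\e(S-S_{n-1})$ converges strongly.

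\textbf{The rest of the argument.} Your (I) and your liminf coincide with the paper's. One small slip in (I): with your Young constant $\frac1{4\tau}$, the $\mathbf b^\e(\mathbf U)$ contributions cancel exactly, so use $\frac1{8\tau}$ instead.

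For the limsup, your route differs from the paper's in a useful way. The paper reduces to $D=\{u_i-u_e\in L^\infty\}$ via Proposition 6.5 and Lemma 4.19 of \cite{PSCF05}, and simply asserts a recovery sequence with $\sup_\e\|\Pi_\e(\widetilde U_i^\e-\widetilde U_e^\e)\|_{L^\infty}<\infty$. Your 1-Lipschitz truncation of the discrete recovery sequence actually proves such a bound. The price is that it needs $U_i$ and $U_e$ separately bounded.

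Smoothness is unnecessary, and density in energy of that class is elementary. Let $V=U_i-U_e$, let $T_M$ denote truncation at level $M$, and let $m_k$ be the mean of $T_kU_e$. Replace $(U_i,U_e)$ by $(T_kU_e+T_MV-m_k,\;T_kU_e-m_k)$. As $k,M\to\infty$ this converges in $H^1$. Moreover $\int_\Omega\varphi(T_MV)\to\int_\Omega\varphi(V)$ by dominated convergence, since $0\le\varphi(T_MV)\le\varphi(V)$ ($\varphi$ is convex with minimum at $0$). Because all approximants and recovery sequences converge strongly, the diagonal argument can be carried out in the $L^2$ metric.
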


\begin{proof} 
	To prove (I), let $\mathbf{U}^\e=(U_i^\e,U_e^\e,S^\e)\in \mathbf{X}^\e$ be a sequence satisfying the uniform energy bound 
	\begin{align*}
		\sup_{\e>0} \Psi_n^{\e,\tau}(\mathbf{U}^\e) 
		&= \sup_{\e>0} \left(\frac{1}{2\tau} \mathbf{b}^\e(\mathbf{U}^\e - \mathbf{U}_{n-1}^{\e,\tau}) 
		+ \frac{1}{2} \mathbf{a}^\e(\mathbf{U}^\e)\right.\\
		&\qquad \left.+ \mathbf{\phi}^\e(\mathbf{U}^\e) 
		- \mathbf{g}^\e(\mathbf{U}_{n-1}^{\e,\tau}, \mathbf{U}^\e)\right) < \infty.
	\end{align*}	 
	Since the dissipation term $\mathbf{b}^\e$ is non-negative and the linear perturbation $\mathbf{g}^\e$ is sub-quadratic with respect to the energy, this hypothesis implies that the energy $E^\e(\mathbf{U}^\e)= \frac{1}{2} \mathbf{a}^\e(\mathbf{U}^\e) + \phi^\e(\mathbf{U}^\e)$ remains uniformly bounded independently of $\e$.  Proposition \ref{proposition:compactness} and Theorem \ref{th:ac04} thus ensure the existence of a subsequence $(\e_j)_{j\in \mathbb{N}} \to 0$ and a limit state $\mathbf{U} = (U_{i}, U_{e}, S) \in \mathbf{X}$ such that $\mathbf{U}^{\e_j} \xrightarrow{s-s-w} \mathbf{U}$ and $\mathcal H_{\e_j}$ $\Gamma$-converges to $\mathcal H$. For sake of notation, in the rest of the proof, we still denote this subsequence by $(\e)$ instead of $(\e_j)$.
	
	Regarding statement (II), we first establish the liminf inequality. Let $\mathbf{U}^\e \xrightarrow{s-s-w} \mathbf{U}$. We analyze the terms of $\Psi_n^{\e, \tau}$ as follows:
	\begin{itemize}
		\item \textit{Diffusive and Recovery parts:} By Theorem \ref{th:ac04}, and Lemma \ref{lemma:Gamma_norm}, 
		\begin{equation*}
			\liminf_{\e \to 0} \frac{1}{2} \mathbf{a}^\e(\mathbf{U}^\e) \ge \frac{1}{2} \mathbf{a}(\mathbf{U}).
		\end{equation*}
		\item \textit{Dissipative and Linear parts:} Since $\mathbf{U}^\e \xrightarrow{s-s-w} \mathbf{U}$ and $\mathbf{U}_{n-1}^{\e,\tau} \xrightarrow{s-s-w} \mathbf{U}_{n-1}$, Lemmas \ref{lemma:Gamma_norm} and \ref{lemma:gamma_g} yield
		\begin{align*}
			\liminf_{\e \to 0} \frac{1}{2\tau} \mathbf{b}^\e(\mathbf{U}^\e - \mathbf{U}_{n-1}^{\e,\tau}) &\ge \frac{1}{2\tau} \mathbf{b}(\mathbf{U} - \mathbf{U}_{n-1}), \\
			\lim_{\e \to 0} \mathbf{g}^\e(\mathbf{U}^\e, \mathbf{U}_{n-1}^{\e,\tau}) &= \mathbf{g}(\mathbf{U}, \mathbf{U}_{n-1}).
		\end{align*}
		\item \textit{Nonlinear potential:} 
		By the strong $L^2$-convergence of $U_{i}^{\e}-U_{e}^{\e}$, the lower semicontinuity of $\varphi$ and the standard lower semicontinuity of integral functionals, we obtain: 
		\begin{equation*}
			\liminf_{\e \to 0} \phi^\e(\Pi_\e\mathbf{U}^\e) = \liminf_{\e \to 0} \int_\Omega \varphi(\Pi_\e(U_{i}^{\e}-U_{e}^{\e}))d x \ge \int_\Omega \varphi(U_{i} - U_{e}) \, dx.
		\end{equation*}
	\end{itemize}
	Summing these contributions gives the required liminf inequality. To complete the proof of the $\Gamma$-convergence (II), it remains to establish the limsup inequality. We invoke \cite[Proposition 6.5]{PSCF05} for the set 
	\[
	D:= \left\{ (u_i,u_e,s)\in H^1(\Omega)\times H^1_\circ(\Omega)\times L^2(\Omega) : u_i-u_e \in L^\infty(\Omega)\right\},
	\]
	which satisfies a suitable density condition thanks to \cite[Lemma 4.19]{PSCF05}.
	Given a limit state $\mathbf{U}=(U_i, U_e, S) \in D$, owing to Theorem \ref{th:ac04}, it is possible to construct a recovery sequence $\widetilde{\mathbf{U}}^{\e} = (\widetilde{U}_{i}^{\e}, \widetilde{U}_{e}^{\e}, \widetilde{S}^{\e}) \in \mathbf{X}^\e$ such that $\widetilde{\mathbf{U}}^{\e} \xrightarrow{s-s-w} \mathbf{U}$ as $\e \to 0$,
	\begin{equation}
		\label{eq:linfty}
		\lim_{\e \to 0}\mathcal{H}_\e(\widetilde{U}_{i}^{\e}, \widetilde{U}_{e}^{\e}) = \mathcal{H}(U_i,U_e),\quad \text{and}\quad \sup \| \Pi_\e (\widetilde{U}_{i}^{\e}- \widetilde{U}_{e}^{\e})\|_{L^{\infty}(\Omega)} <+\infty.
	\end{equation}
	Moreover, $\widetilde{S}^{\e}$ defined as the cell-average interpolation of the limit function $S$, satisfies $\Pi_\e \widetilde{S}^\e \to S$ strongly in $L^2(\Omega)$ and thus, by \eqref{eq:linfty} and Lemma \ref{lemma:Gamma_norm}
	\begin{equation*}
		\lim_{\e \to 0}  \mathbf{a}^\e(\widetilde{\mathbf{U}}^{\e}) = \lim_{\e \to 0} \mathcal{H}_\e(\widetilde{U}_{i}^{\e}, \widetilde{U}_{e}^{\e}) + \gamma|\widetilde{S}^\e|_\e^2= \mathcal{H}(U_{i}, U_{e}) + \gamma\|S\|_{L^2(\Omega)}^2=   \mathbf{a}(\mathbf{U}).
	\end{equation*}
	By \eqref{eq:linfty} and continuity of $\varphi$, the terms $v_\e:=\Pi_\e (\widetilde{U}_{i}^{\e}- \widetilde{U}_{e}^{\e})$ and $-\varphi(v_\e)$ are uniformly bounded in $L^\infty$ and arguing as in the liminf estimate we obtain
	\begin{equation*}
		\liminf_{\e \to 0} \int_\Omega -\varphi(v_\e(x))dx \geq \int_\Omega -\varphi(U_i - U_e) dx
	\end{equation*}
	and thus
	\[
	\limsup_{\e \to 0} \int_\Omega \varphi(v_\e(x))dx \leq \int_\Omega \varphi(U_i - U_e) dx.
	\]
	Finally, we consider the metric and linear terms. The state $\mathbf{U}_{n-1}^{\e,\tau}$ is fixed and converges to $\mathbf{U}_{n-1}$ in the $s$-$s$-$w$ topology. Since the recovery sequence $\widetilde{\mathbf{U}}^{\e}$ converges strongly in $L^2(\Omega)$ in all three components, the quadratic dissipation $\mathbf{b}^\e$ and the linear perturbation $\mathbf{g}^\e$ satisfy, by Lemmas \ref{lemma:Gamma_norm} and \ref{lemma:gamma_g}:
	\begin{align*}
		\lim_{\e \to 0} \frac{1}{2\tau} \mathbf{b}^\e(\widetilde{\mathbf{U}}^{\e} - \mathbf{U}_{n-1}^{\e,\tau}) &= \frac{1}{2\tau} \mathbf{b}(\mathbf{U} - \mathbf{U}_{n-1}), \\
		\lim_{\e \to 0} \mathbf{g}^\e(\widetilde{\mathbf{U}}^{\e}, \mathbf{U}_{n-1}^{\e,\tau}) &= \mathbf{g}(\mathbf{U}, \mathbf{U}_{n-1}).
	\end{align*}
\end{proof}

The combination of the $\Gamma$-convergence result with the equi-coercivity of the incremental functional $\widetilde\Psi_n^{\e,\tau}$ allows us to characterize the asymptotic behaviour of the discrete solutions. This ensures that the sequence of discrete minimizers remains within a compact set and converges to a minimizer of the scaled problem \cite[Corollary 7.20]{DM93}. The uniqueness of the minimizer of the time-discrete Bidomain problem and the separate lower-semicontinuity of the terms $\mathbf{a}^\e, \mathbf{b}^\e, \phi^\e$ yield the following result:
\begin{corollary}
	Let $\{\mathbf{U}_n^{\e, \tau}\}_{\e>0}$ be a sequence of minimizers of the discrete functional $\widetilde\Psi_n^{\e, \tau}$, assume that $\mathcal H = \Gamma-\lim_{\e\to 0}\mathcal H_\e$  and let $\mathbf{U}_n^\tau \in H^1(\Omega)\times H^1_\circ(\Omega)\times L^2(\Omega)$ be the unique minimizer of the scaled functional $\Psi_n^\tau$. If  $\{\mathbf{U}_{n-1}^{\e,\tau}\}_{\e > 0}  \xrightarrow{s-s-w} \mathbf{U}_{n-1}$ and \eqref{eq:ASn-1} holds, then:
	\begin{enumerate}
		\item[(i)] $\mathbf{U}_n^{\e, \tau} \xrightarrow{s-s-w} \mathbf{U}_n^\tau$ as $\e \to 0$.
	\item[(ii)] The discrete minimum energies converge to the continuous minimum energy:
	\begin{equation*}
		\lim_{\e \to 0} \widetilde\Psi_n^{\e, \tau}(\mathbf{U}_n^{\e, \tau}) = \Psi_n^\tau(\mathbf{U}_n^\tau).
	\end{equation*}
	\item[(iii)] $\displaystyle \lim_{\e \to 0} \mathbf{a}^{\e}(\mathbf{U}_n^{\e, \tau}) = \mathbf{a}(\mathbf{U}_n^\tau),\quad \lim_{\e \to 0} \mathbf{b}^{\e}(\mathbf{U}_n^{\e, \tau}) = \mathbf{b}(\mathbf{U}_n^\tau),\quad \lim_{\e \to 0} \phi^{\e}(\mathbf{U}_n^{\e, \tau}) = \phi(\mathbf{U}_n^\tau)$.
\end{enumerate}
\end{corollary}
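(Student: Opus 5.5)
The argument is the standard fundamental theorem of $\Gamma$-convergence, combined with uniqueness of the limit minimizer and the separate lower semicontinuity of each term.

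\emph{Step 1: a priori bound.} The minimizers $\mathbf{U}_n^{\e,\tau}$ have uniformly bounded energy. To see this, compare with a competitor: take the recovery sequence $\widetilde{\mathbf{U}}^\e$ from Theorem \ref{theorem:gamma}(II)(b) for the fixed limit minimizer $\mathbf{U}_n^\tau$. If $\mathbf{U}_n^\tau$ does not lie in the dense set $D$, use a diagonal argument with elements of $D$ converging in energy, as in \cite[Proposition 6.5]{PSCF05}. Minimality then gives
\[
\limsup_{\e\to0}\widetilde\Psi_n^{\e,\tau}(\mathbf{U}_n^{\e,\tau})\le \limsup_{\e\to0}\widetilde\Psi_n^{\e,\tau}(\widetilde{\mathbf{U}}^\e)\le \Psi_n^\tau(\mathbf{U}_n^\tau)<\infty .
\]
This holds along the subsequence on which $\mathcal H_\e$ $\Gamma$-converges, which is assumed.

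\emph{Step 2: compactness and identification of the limit.} By Theorem \ref{theorem:gamma}(I), every subsequence has a further subsequence converging $s$-$s$-$w$ to some $\mathbf{U}\in\mathbf{X}$. Using the liminf inequality, then Step 1, then minimality of $\mathbf{U}_n^\tau$,
\[
\Psi_n^\tau(\mathbf{U})\le\liminf_{\e\to0}\widetilde\Psi_n^{\e,\tau}(\mathbf{U}_n^{\e,\tau})\le\limsup_{\e\to0}\widetilde\Psi_n^{\e,\tau}(\mathbf{U}_n^{\e,\tau})\le\Psi_n^\tau(\mathbf{U}_n^\tau)\le\Psi_n^\tau(\mathbf{U}).
\]
Hence $\mathbf{U}$ minimizes $\Psi_n^\tau$, and strict convexity gives $\mathbf{U}=\mathbf{U}_n^\tau$. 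Since the limit does not depend on the subsequence, the whole family converges, which proves (i). The chain of inequalities is then a chain of equalities, which proves (ii).

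\emph{Step 3: convergence of the individual terms.} The energy splits as
\[
\Psi_n^{\e,\tau}=\tfrac{1}{2\tau}\mathbf{b}^\e(\cdot-\mathbf{U}_{n-1}^{\e,\tau})+\tfrac12\mathbf{a}^\e+\phi^\e-\mathbf{g}^\e(\mathbf{U}_{n-1}^{\e,\tau},\cdot).
\]
The $\mathbf{g}^\e$ term converges by Lemma \ref{lemma:gamma_g}. The other three terms each satisfy a separate liminf inequality, shown in the proof of Theorem \ref{theorem:gamma}. Since their sum converges to the sum of the limits, each liminf must be a limit. Otherwise one term would have $\liminf<\limsup$, and the other terms' liminf bounds would force the total $\limsup$ to exceed its limit.

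This gives $\mathbf{a}^\e(\mathbf{U}_n^{\e,\tau})\to\mathbf{a}(\mathbf{U}_n^\tau)$ and $\phi^\e\to\phi$. It also gives $\mathbf{b}^\e(\mathbf{U}_n^{\e,\tau}-\mathbf{U}_{n-1}^{\e,\tau})\to\mathbf{b}(\mathbf{U}_n^\tau-\mathbf{U}_{n-1})$.

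\emph{Step 4: the main obstacle.} Passing from this convergence of $\mathbf{b}^\e$ of the \emph{difference} to (iii) for $\mathbf{b}^\e(\mathbf{U}_n^{\e,\tau})$ is the delicate point. The $v$-part of $\mathbf{b}^\e$ converges strongly, so the only issue is $|S^\e|_\e^2$. Weak convergence alone does not give convergence of norms, so I will extract it from the $\mathbf{a}$ term: $\mathbf{a}^\e$ contains $\gamma|S^\e|_\e^2$, and $\mathcal H_\e$ and $\gamma|\cdot|^2$ each satisfy a liminf inequality. Equality in the $\mathbf{a}$ limit therefore forces $|S^\e|_\e\to\|S\|$ when $\gamma>0$, which yields strong convergence of $S^\e$ and hence of $\mathbf{b}^\e$.

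If $\gamma=0$, I would instead split $\mathbf{b}^\e$ of the difference. The cross term $\langle S^\e,S_{n-1}^\e\rangle_\e$ is a product of two weakly converging sequences, so I would first argue inductively in $n$ that $S^\e_{n-1}$ converges strongly. At $n=0$ this is the well-prepared data, and strong convergence propagates since the $s$-equation is linear and pointwise. Then the norm convergence follows from the convergence of $\mathbf{b}^\e$ of the difference.
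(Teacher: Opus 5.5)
Your Steps 1--3 match the paper's argument: the fundamental theorem of $\Gamma$-convergence with equi-coercivity, uniqueness of the minimizer of $\Psi_n^\tau$, and splitting into separately lower semicontinuous terms. In Step 4 you correctly catch something the paper passes over: $\Psi_n^{\e,\tau}$ contains $\mathbf{b}^\e(\mathbf{U}-\mathbf{U}_{n-1}^{\e,\tau})$, not $\mathbf{b}^\e(\mathbf{U})$. However, your $\gamma>0$ resolution proves something false.

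The $s$-part of $\Psi_n^{\e,\tau}$ is $\frac{1}{2\tau}|S-S_{n-1}^{\e,\tau}|_\e^2+\frac{\gamma}{2}|S|_\e^2-\sigma\langle V_{n-1}^{\e,\tau},S\rangle_\e$, and it decouples from $(U_i,U_e)$. Hence the minimizer's $s$-component is explicit:
\[
S_n^{\e,\tau}=\frac{S_{n-1}^{\e,\tau}+\tau\sigma V_{n-1}^{\e,\tau}}{1+\tau\gamma},
\]
with the same formula for $S_n^\tau$. Since $V_{n-1}^{\e,\tau}$ converges strongly, $\Pi_\e S_n^{\e,\tau}$ converges strongly if and only if $\Pi_\e S_{n-1}^{\e,\tau}$ does. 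So under the stated hypothesis, which gives only weak convergence of $S_{n-1}^{\e,\tau}$, the convergence of $\mathbf{b}^\e(\mathbf{U}_n^{\e,\tau})$ in (iii) fails in general for every $\gamma\ge0$. For $\gamma>0$, the convergence of $\mathbf{a}^\e(\mathbf{U}_n^{\e,\tau})$ and (ii) fail as well.

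The step that breaks is the recovery-sequence bound in Step 1, i.e.\ the limsup inequality of Theorem~\ref{theorem:gamma} that you use as a black box. The term $\mathbf{b}^\e(\widetilde{\mathbf{U}}^\e-\mathbf{U}_{n-1}^{\e,\tau})$ contains $|S_{n-1}^{\e,\tau}|_\e^2$. Neither Lemma~\ref{lemma:gamma_g} (strong--weak pairings only) nor Lemma~\ref{lemma:Gamma_norm} (only a liminf under weak convergence) makes it converge to $\|S_{n-1}\|_{L^2(\Omega)}^2$. Write $\Pi_\e S_{n-1}^{\e,\tau}=S_{n-1}+R_\e$ and $\Pi_\e S^\e=S+T_\e$ with $R_\e,T_\e\rightharpoonup0$. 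Then the $s$-part of $\widetilde\Psi_n^{\e,\tau}$ exceeds that of $\Psi_n^\tau$ by $\frac{1}{2\tau}\|T_\e-R_\e\|^2+\frac{\gamma}{2}\|T_\e\|^2+o(1)\ge\frac{\gamma}{2(1+\tau\gamma)}\|R_\e\|^2+o(1)$. So for $\gamma>0$ and $\liminf\|R_\e\|>0$, no recovery sequence exists. This is the same lack of norm convergence of $S_{n-1}^{\e,\tau}$ that you flagged in your $\gamma=0$ paragraph. It is not confined to $\gamma=0$: for $\gamma>0$ it already invalidates Step 1 and everything built on it.

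The repair is the one you proposed for $\gamma=0$, applied for every $\gamma$: assume $\Pi_\e\mathbf{U}_{n-1}^{\e,\tau}\to\mathbf{U}_{n-1}$ strongly in $L^2(\Omega)^3$. This is what the induction in the proof of Theorem~\ref{th:main} actually provides. The initial data converge strongly, and the explicit formula above propagates strong convergence of the $s$-component from one step to the next. Under this hypothesis the limsup inequality is genuinely valid and Steps 1--3 go through verbatim. Your difference argument, or the explicit formula directly, then gives $\mathbf{b}^\e(\mathbf{U}_n^{\e,\tau})\to\mathbf{b}(\mathbf{U}_n^\tau)$ for all $\gamma\ge0$, so the detour through $\gamma|S^\e|_\e^2$ in $\mathbf{a}^\e$ is unnecessary.

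Part (i) holds even under the weaker hypothesis. The reason is that $S$ is explicit, and the $(U_i,U_e)$-problem sees $S_{n-1}^{\e,\tau}$ only through the strong--weak pairing $\delta\langle S_{n-1}^{\e,\tau},U_i-U_e\rangle_\e$. The paper's own one-line justification, via separate lower semicontinuity of $\mathbf{a}^\e,\mathbf{b}^\e,\phi^\e$, has the same gap and needs the same strengthened hypothesis.
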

As a consequence, we also obtain the following result:
\begin{corollary}
\label{cor:minimizers}
Let $v^\e \in X^\e$ and $v\in L^2(\Omega)$ be such that
\(
\lim_{\e \to 0}|v^\e|_\e = \|v\|_{L^2(\Omega)}.
\) 
If $\bar u^\e = (u_i^\e,u_e^\e)$ is the unique solution of the minimum problem
\[
\min \left\{ \mathcal{H}^\e(\bar u): \bar u \in X^\e \times X_0^\e,\ u_i-u_e = v^\e \right\} 
\]
and $\limsup_{\e\to 0} \mathcal{H}^\e(\bar u^\e)<+\infty$, then
\[
\Pi_\e \bar u^\e \to \bar u\quad \text{strongly in }L^2(\Omega)^2\quad 
\text{and}\quad  \lim_{\e \to 0}\mathcal{H}^\e(\bar u^\e)=\mathcal{H}(\bar u),
\]
where $\bar u$ is the unique solution of the minimum problem
\[
\min \left\{ \mathcal{H}(\bar u): \bar u \in H^1(\Omega) \times H^1_\circ(\Omega),\ u_i-u_e = v \right\}.
\]
and $\mathcal{H}^\e,\mathcal{H}$ were defined in \eqref{def:Hepsilon} and \eqref{def:H}.
\end{corollary}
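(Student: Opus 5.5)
The plan is a constrained version of the $\Gamma$-convergence argument of Theorem \ref{theorem:gamma}: compactness, a liminf inequality, and a recovery sequence that satisfies the constraint $u_i-u_e=v^\e$ exactly. Throughout I read the hypothesis as $\Pi_\e v^\e\to v$ strongly in $L^2(\Omega)$; convergence of norms alone does not identify the limit of the constraint, so in the weak setting one would need $\Pi_\e v^\e\rightharpoonup v$ plus the norm convergence to upgrade to strong convergence. I work along a subsequence on which $\mathcal H^\e$ $\Gamma$-converges to $\mathcal H$ (Theorem \ref{th:ac04}). Since the limit problem has a unique minimizer, the usual Urysohn argument then gives convergence of the whole family.

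\emph{Compactness.} Since $\limsup\mathcal H^\e(\bar u^\e)<\infty$ and $u_e^\e\in X_0^\e$, the discrete Poincar\'e inequality \eqref{eq:coercivity} bounds $|u_e^\e|_\e$. Then $|u_i^\e|_\e\le |u_e^\e|_\e+|v^\e|_\e$ is bounded as well. Lemma \ref{lemma:compactness} gives a subsequence with $\Pi_\e\bar u^\e\to\bar u=(u_i,u_e)\in H^1(\Omega)\times H^1_\circ(\Omega)$ strongly in $L^2$. Passing to the limit in $\Pi_\e u_i^\e-\Pi_\e u_e^\e=\Pi_\e v^\e$ gives $u_i-u_e=v$, so $\bar u$ is admissible.

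\emph{Liminf.} The liminf inequality of Theorem \ref{th:ac04} gives $\mathcal H(\bar u)\le\liminf\mathcal H^\e(\bar u^\e)$.

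\emph{Recovery sequence.} It remains to show $\limsup\mathcal H^\e(\bar u^\e)\le\mathcal H(\bar w)$ for every admissible $\bar w=(w_i,w_e)$. Any such competitor can be written $\bar w=\bar u+(h,h)$ with $h:=w_i-u_i=w_e-u_e\in H^1$. By density and continuity of $\mathcal H$ on $H^1$ I may take $h$ smooth; after subtracting a constant from $h$ and adding it to the $u_i$-component, I may also assume $\int_\Omega h=0$. The natural competitor is
\[
\bar u^\e+(h^\e,h^\e-m^\e),
\]
where $h^\e$ are the nodal values (or cell averages) of $h$ and $m^\e$ is the constant making the second component lie in $X_0^\e$. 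Because both components are shifted by the same function up to a constant, the constraint is preserved exactly. The key identity is the Euler--Lagrange condition for $\bar u^\e$ (cf.\ the last assertion of Theorem \ref{th:wp-KNM}): the bilinear form of $\mathcal H^\e$ vanishes on $(\bar u^\e,(k,k))$ for every $k\in X^\e$. This orthogonality, rather than any convergence of the fluxes, is what kills the cross term. Since the constant $m^\e$ does not change discrete differences, expanding the quadratic form gives
\[
\mathcal H^\e(\bar u^\e+(h^\e,h^\e-m^\e))=\mathcal H^\e(\bar u^\e)+\mathcal H^\e(h^\e,h^\e).
\]
Minimality of $\bar u^\e$ shows the left side is at least $\mathcal H^\e(\bar u^\e)$, so this expansion only yields the trivial inequality. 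The real content is therefore to transport the same identity to the limit, i.e.\ to prove
\[
\mathcal H(\bar u+(h,h))=\lim_{\e\to 0}\mathcal H^\e(\bar u^\e)+\mathcal H(h,h),
\]
while knowing only $\liminf_{\e\to 0}\mathcal H^\e(\bar u^\e)\ge\mathcal H(\bar u)$.

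\emph{Main obstacle and planned fix.} This last step is the main difficulty: $\Gamma$-convergence of quadratic forms does not by itself give convergence of the bilinear cross terms along arbitrary sequences, since the weights $G^\e$ oscillate. I would follow the approach of Theorem \ref{theorem:gamma}. Take a recovery sequence $\bar z^\e\to\bar w$ with $\mathcal H^\e(\bar z^\e)\to\mathcal H(\bar w)$, from Theorem \ref{th:ac04} or \cite[Proposition 6.5]{PSCF05}. Its constraint defect $d^\e:=v^\e-(z_i^\e-z_e^\e)$ tends to $0$ in $L^2$ but has no gradient control. I would then minimize the penalized energy
\[
\mathcal H^\e(\bar z)+\eta^{-1}|z_i-z_e-v^\e|_\e^2 .
\]
The penalty is a continuous perturbation, so this family $\Gamma$-converges to the penalized limit energy. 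I would then let $\eta\to0$ diagonally, using the discrete bidomain operator \eqref{eq:operator_L} to compare penalized and exact minimum values. Writing the exact minimum as $\langle\mathcal L^\e v^\e,v^\e\rangle_\e$, the difference between the two minima is a spectral quantity in $\mathcal L^\e$. The estimate I would aim for is that this difference vanishes as $\eta\to0$ uniformly in $\e$, using that $\limsup\mathcal H^\e(\bar u^\e)<\infty$ bounds the energy of $v^\e$. Once both inequalities hold, $\lim\mathcal H^\e(\bar u^\e)=\mathcal H(\bar u)$ and $\bar u$ is the constrained minimizer, which completes the proof.
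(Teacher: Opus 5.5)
You are right that the hypothesis must be read as $\Pi_\e v^\e\to v$ strongly, and your compactness and liminf steps are correct. You also locate the difficulty exactly: everything rests on $\limsup_{\e}\mathcal H^\e(\bar u^\e)\le\mathcal H(\bar w)$ for admissible $\bar w$, i.e.\ on recovery sequences that satisfy the hard constraint with the \emph{prescribed} data $v^\e$. Your planned fix does not close this step. Write $Q^\e(y):=\min\{\mathcal H^\e(\bar z): z_i-z_e=y\}=\langle\mathcal L^\e y,y\rangle_\e$, where $\mathcal L^\e$ is the rescaled discrete bidomain operator. Your penalized minimum is then $\min_y\{Q^\e(y)+\eta^{-1}|y-v^\e|_\e^2\}$, and its gap to $Q^\e(v^\e)$ is $\sum_\lambda\frac{\eta\lambda}{1+\eta\lambda}\,\lambda\,|\hat v^\e_\lambda|^2$, summed over the spectrum of $\mathcal L^\e$. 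An energy bound $\sum_\lambda\lambda|\hat v^\e_\lambda|^2\le C$ does not make this gap small uniformly in $\e$. The spectrum reaches order $\e^{-2}$, so $v^\e$ may keep $O(1)$ energy at $\lambda\gtrsim\eta^{-1}$; to exclude this you would need a bound on $|\mathcal L^\e v^\e|_\e$.

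In fact no argument can work here, because the conclusion is false under the stated hypotheses. Take $\Omega=(0,1)^2$ and $G_e^\e\equiv1$. Let $G_i^\e=1$ on vertical edges and alternate $1,3,1,3,\dots$ along the horizontal edges of every row, and let $v^\e(x_k)=(x_k)_1$. The constrained problem decouples edge by edge. On a horizontal edge the difference quotient of $u_i^\e$ is $1/(G_i^\e+1)\in\{1/2,1/4\}$ and the energy density is $G_i^\e/(G_i^\e+1)\in\{1/2,3/4\}$, while vertical differences vanish. Hence $\Pi_\e u_i^\e\to u_i^*$ with $\partial_1u_i^*=3/8$, and $\mathcal H^\e(\bar u^\e)\to5/8$. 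The $\Gamma$-limit is the laminate one: $M_i,M_e$ are diagonal, with $\mathbf e_1\cdot M_i\mathbf e_1=3/2$ (the harmonic mean of $1$ and $3$) and $\mathbf e_1\cdot M_e\mathbf e_1=1$. The limit constrained minimizer therefore has $\partial_1u_i=2/5$ and energy $3/5<5/8$, so both conclusions fail. Even with $G_i^\e=G_e^\e\equiv1$, adding $\e(-1)^{k_1}$ at the node $\e(k_1,k_2)$ to smooth nodal data raises the limit of $\mathcal H^\e(\bar u^\e)$ by $2|\Omega|$ above $\mathcal H(\bar u)$. In that case the minimizers still converge, but the energies do not.

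What is missing is an energy-compatibility hypothesis on the data, for instance $\limsup_\e\mathcal H^\e(\bar u^\e)\le\min\{\mathcal H(\bar w): w_i-w_e=v\}$. This says that $v^\e$ is a recovery sequence for the reduced forms $Q^\e$. These forms do $\Gamma$-converge to the reduced limit form: in their $\Gamma$-limsup the data $y^\e:=z_i^\e-z_e^\e$ can be taken from an unconstrained recovery sequence. Under that hypothesis your first two steps already conclude. Every cluster point is admissible and has energy at most the minimum, so it is the unique minimizer and the energies converge; no recovery construction is needed.

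The paper gives no separate proof of this corollary; it presents it as a consequence of the preceding $\Gamma$-convergence results. That route requires $\Gamma$-convergence of $\mathcal H^\e$ restricted to $\{w_i-w_e=v^\e\}$ towards $\mathcal H$ restricted to $\{w_i-w_e=v\}$. Its limsup half is precisely the step that fails above, so the gap lies in the statement itself, not only in your proposal.
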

We now have all the elements to complete the proof of the main result.

\begin{proof}[Proof of Theorem \ref{th:main}]
Let $\mathbf{u}^{\e_j} = (u_i^{\e_j},u_e^{\e_j},s^{\e_j})$ be a sequence of  solutions of the KNM$-\e_j$ problem, such that $\mathcal H_{\e_j}$ $\Gamma$-converges to $\mathcal H$. Let $M_i$ and $M_e$ be the conductivity tensors in $\mathcal H$ and let $\mathbf{u}=(u_i,u_e,s)$ be the solution of the corresponding BD model. Fix $t \in [0, T]$. Owing to estimates \eqref{apriori-estimate} and Corollary \ref{cor:minimizers}, we only have to show that $\Pi_{\varepsilon_j} v^{\varepsilon_j}(t)=\Pi_{\varepsilon_j}\left(u_i^{\varepsilon_j}(t) - u_e^{\varepsilon_j}(t)\right) \to v(t)$ and that $\Pi_{\varepsilon_j} s^{\varepsilon_j}(t) \to s(t)$, strongly in $L^2(\Omega)$. For any given time step size $\tau > 0$, we denote by $(V^{\varepsilon_j,\tau},S^{\varepsilon_j,\tau})$ and $(V^\tau,S^\tau)$ the time-dependent piecewise linear interpolants of the discrete solutions generated by the minimizing movements schemes \eqref{MM-eps} and \eqref{MM-eps-tt}, respectively. Dropping the variable $t$ for sake of notation, we have:
\begin{equation}
	\label{eq:error_decomp}
	\begin{aligned}
		& \|\Pi_{\varepsilon_j}(v^{\varepsilon_j},s^{\varepsilon_j}) - (v,s)\|_{L^2(\Omega)^2}\leq \, \| \Pi_{\varepsilon_j}(v^{\varepsilon_j},s^{\varepsilon_j}) - \Pi_{\varepsilon_j}(V^{\varepsilon_j,\tau},S^{\varepsilon_j,\tau})\|_{L^2(\Omega)^2} \\
		&\quad \quad + \|\Pi_{\varepsilon_j}(V^{\varepsilon_j,\tau},S^{\varepsilon_j,\tau}) - (V^{\tau},S^{\tau})\|_{L^2(\Omega)^2} 
		+ \|(V^{\tau},S^{\tau}) - (v,s)\|_{L^2(\Omega)^2}.
	\end{aligned}
\end{equation}
We begin by estimating the first and third terms on the right-hand side of \eqref{eq:error_decomp}. By hypothesis, the sequence of well-prepared microscopic initial data satisfies the uniform energy bound $\limsup_{j \to \infty} E^{\varepsilon_j}(\mathbf{u}_0^{\varepsilon_j}) = M < \infty$. Consequently, by \eqref{final-error-estimate}, we have
\begin{equation}
	\label{eq:c1tau}
	\begin{aligned}
		\| \Pi_{\varepsilon_j}(v^{\varepsilon_j},s^{\varepsilon_j}) - \Pi_{\varepsilon_j}(V^{\varepsilon_j,\tau},S^{\varepsilon_j,\tau})\|_{L^2(\Omega)^2}^2  &= | v^{\varepsilon_j} -V^{\varepsilon_j,\tau}|_{\varepsilon_j}^2 +| s^{\varepsilon_j}-S^{\varepsilon_j,\tau}|_{\varepsilon_j}^2, \\
		&\leq \max\left\{\frac{1}{C_m},1\right\}\mathbf{b}^{\varepsilon_j}(\mathbf{u}^{\varepsilon_j}-{\mathbf{U}}^{\varepsilon_j, \tau}),\\
		& \leq C_1^2 \tau,
	\end{aligned}
\end{equation}
where the positive constant $C_1$ depends exclusively on $M$, $C_m$, and $C$. Analogously, applying the corresponding continuous time-discretization error estimate \eqref{final-error-estimate-bidomain} we obtain
\begin{equation}
	\label{eq:c2tau}
	\|(V^{\tau},S^{\tau}) - (v,s)\|_{L^2(\Omega)^2}
	\leq C_2 \sqrt\tau.
\end{equation}
Next, we address the spatial limit $j \to \infty$ for the remaining central term of \eqref{eq:error_decomp} at the fixed time-step level $\tau > 0$. Let $n \in \{1, \dots, N\}$ denote the specific time interval containing the instant $t$. We carry out an inductive argument on the time step index $n$. For $n=0$, the strong convergence holds by the hypothesis on the initial data. Assume now that $\Pi_{\varepsilon_j} \mathbf{U}_{n-1}^{\varepsilon_j,\tau} \xrightarrow{s-s-w} \mathbf{U}_{n-1}^\tau$ as $j \to \infty$. Under this inductive step, Corollary 1 guarantees that the sequence of unique discrete minimizers $\mathbf{U}_n^{\varepsilon_j,\tau} = (U_{i,n}^{\varepsilon_j,\tau}, U_{e,n}^{\varepsilon_j,\tau}, S_n^{\varepsilon_j,\tau})$ satisfies
\begin{equation*}
	\Pi_{\varepsilon_j} \mathbf{U}_n^{\varepsilon_j,\tau} \xrightarrow{s-s-w} \mathbf{U}_n^\tau = (U_{i,n}^\tau, U_{e,n}^\tau, S_n^\tau) \quad \text{as } j \to \infty,
\end{equation*}
where $\mathbf{U}_n^\tau$ is the unique minimizer of the continuous functional $\Psi_n^\tau$. To upgrade the weak convergence $\Pi_{\varepsilon_j} S_n^{\varepsilon_j,\tau} \rightharpoonup S_n^\tau$ in $L^2(\Omega)$ to the strong topology, we exploit the convergence of $\mathbf{b}^{\varepsilon_j}(\mathbf{U}_n^{\varepsilon_j,\tau})$ guaranteed by Corollary 1 (iii), which implies that $\lim_{j \to \infty} \|\Pi_{\varepsilon_j} S^{\varepsilon_j,\tau}_n\|_{L^2(\Omega)} = \|S_n^\tau\|_{L^2(\Omega)}.$ 
Since the linear interpolant $\mathbf{U}^{\varepsilon_j, \tau}(t)$ is a convex combination of the discrete vectors $\mathbf{U}_{n-1}^{\varepsilon_j,\tau}$ and $\mathbf{U}_n^{\varepsilon_j,\tau}$ on each time subinterval, this strong convergence naturally extends pointwise in time to the instant $t$, yielding
\begin{equation}
	\label{eq:pescayogurt}
	\lim_{j \to \infty}\|\Pi_{\varepsilon_j}(V^{\varepsilon_j,\tau}(t),S^{\varepsilon_j,\tau}(t)) - (V^{\tau}(t),S^{\tau}(t))\|_{L^2(\Omega)^2} = 0.
\end{equation}
Passing to the limit superior as $j \to \infty$ on both sides of inequality \eqref{eq:error_decomp}, by \eqref{eq:c1tau}, \eqref{eq:c2tau}, and \eqref{eq:pescayogurt}, we obtain
\begin{equation*}
	\limsup_{j \to \infty}  \| \Pi_{\varepsilon_j} (v^{\varepsilon_j}(t),s^{\varepsilon_j}(t)) - (v(t),s(t))\|_{L^2(\Omega)^2}  \leq (C_1 + C_2) \sqrt{\tau}.
\end{equation*}
Taking the limit as the time step size $\tau \to 0^+$ concludes the proof.
\end{proof}


\end{document}